\documentclass[12pt,reqno]{amsart}

\usepackage{a4wide}
%\usepackage[english]{babel}
%\selectlanguage{english}
\usepackage{amsmath}
\usepackage{psfrag}
\usepackage[T1]{fontenc}
\usepackage{amsthm}
\usepackage{amssymb,amsxtra}
\usepackage[usenames]{color}
\usepackage{stmaryrd}
\usepackage{mathrsfs}
\usepackage{upgreek}
\usepackage{ytableau}
\usepackage{comment,mathbbol}
\usepackage{paralist}
\usepackage{graphicx}
\usepackage{tikz}
\usetikzlibrary{matrix,fit}
\usepackage{geometry}
\geometry{top=3cm, left=2cm, right=2cm, bottom=3cm}

\DeclareSymbolFontAlphabet{\mathbbl}{bbold}

%\usepackage{amssymb,amsmath,tabularx}
%\usepackage{amsthm,verbatim,mathrsfs}
%\usepackage{marginnote}

%%%%%%%%%%%%%%%%%%%%%%%%%%%%%%%%%%%%%%%%%%
%\usepackage{geometry}

%\usepackage{pstricks,pst-node,pst-plot}

%\DeclareMathAlphabet{\mathpzc}{OT1}{pzc}{m}{it}
%\geometry{a4paper, top=2cm, left=3cm, right=3cm, bottom=3cm}

\newcommand{\NN}{\mathbb{N}}
\newcommand{\Z}{\mathbb{Z}}

\DeclareMathOperator{\res}{Res}
\DeclareMathOperator{\ind}{Ind}
\DeclareMathOperator{\Soc}{soc}
\DeclareMathOperator{\Rad}{rad}

\DeclareMathOperator{\Tr}{Tr}
\DeclareMathOperator{\Inf}{Inf}
\DeclareMathOperator{\Def}{Def}

\renewcommand{\leq}{\leqslant}
\renewcommand{\geq}{\geqslant}

\renewcommand{\unrhd}{\trianglerighteqslant}
\renewcommand{\b}{\boldsymbol}

\newcommand{\sym}[1]{\mathfrak{S}_{#1}}
\newcommand{\alt}[1]{\mathfrak{A}_{#1}}
\renewcommand{\Tr}{\mathrm{Tr}}
\newcommand{\ch}{\mathrm{ch}}
\renewcommand{\P}{\mathscr{P}}

\newcommand{\RP}{\mathscr{RP}}
\newcommand{\m}{{\pmb{\mathscr{M}}}}

\newcommand{\sgn}{\operatorname{\mathrm{sgn}}}
\newcommand{\C}{\mathscr{C}}
\newcommand{\Seq}{\mathrm{Seq}}

\newcommand{\N}{\mathrm{N}}

\newcommand{\Rho}{\boldsymbol{\rho}}
\renewcommand{\i}{\mathring{\imath}}
\renewcommand{\j}{\mathring{\jmath}}
\newcommand{\bb}[1]{\mathbbl{{#1}}}
\renewcommand{\t}{\mathfrak{t}}
\newcommand{\s}{\mathfrak{s}}
\newcommand{\List}{\mathrm{List}}
\newcommand{\ext}{\tiny{\text{$\bigwedge$}}}
\newcommand{\GL}{\mathrm{GL}}
\newcommand{\modcat}[1]{\text{${#1}$-mod}}
\newcommand{\sgnK}{\mathrm{K}^\pm}
\newcommand{\K}{\mathrm{K}}
\newcommand{\cont}{\text{{\tiny $\#$}}}
\newcommand{\suppL}[1]{\Lambda^\circ(#1)}

\newcommand{\G}{\mathcal{G}}
\newcommand{\Frob}{\mathscr{F}}
\newcommand{\trimt}[2]{#1^{\mathsf{T}_#2}}
\newcommand{\trimb}[2]{#1^{\mathsf{B}_#2}}
\newcommand{\trant}[2]{#1_{\leq_#2}}

%\swapnumbers
\setcounter{MaxMatrixCols}{20}

\theoremstyle{definition}

\newtheorem{defn}{Definition}[section]
\newtheorem{rem}[defn]{Remark}

\newtheorem{eg}[defn]{Example}

\theoremstyle{plain}

\newtheorem{prop}[defn]{Proposition}
\newtheorem{lem}[defn]{Lemma}
\newtheorem{cor}[defn]{Corollary}
\newtheorem{thm}[defn]{Theorem}
\newtheorem*{Main Result A}{Main Result A}
\newtheorem*{Main Result B}{Main Result B}
\newtheorem*{Main Result C}{Main Result C}

\numberwithin{equation}{section}

\allowdisplaybreaks

\begin{document}
\title{On signed $p$-Kostka matrices}
%\author{Susanne Danz}
\author{Eugenio Giannelli}
\address[E. Giannelli]{Dipartimento di Matematica e Informatica U. Dini, Viale Morgagni 67/a, Firenze, Italy.}
\email{address:eugenio.giannelli@unifi.it}

\author{Kay Jin Lim}
\address[K. J. Lim]{Division of Mathematical Sciences, Nanyang Technological University, SPMS-04-01, 21 Nanyang Link, Singapore 637371.}
\email{limkj@ntu.edu.sg}

\begin{abstract} We show that the signed $p$-Kostka numbers depend just on $p$-Kostka numbers and the multiplicities of projective indecomposable modules in certain signed Young permutation modules. We then examine the signed $p$-Kostka number $k_{(\alpha|\beta),(\lambda|p\mu)}$ in the case when $|\beta|=p|\mu|$. This allows us to explicitly describe the multiplicities of direct summands of a signed Young permutation module lying in the principal block of $F\sym{mp}$ in terms of the $p$-Kostka numbers.
\end{abstract}

\subjclass[2010]{20C30, 20G43}
\thanks{We would like to thank Susanne Danz for a fruitful discussion in the initial stage of this project. We are also grateful to the anonymous referee for useful suggestion and comments. The second author is supported by Singapore Ministry of Education AcRF Tier 1 grant RG17/20}.

\maketitle

\section{Introduction}\label{sec intro}
In 2001, Donkin introduced signed Young modules as the indecomposable summands of signed Young permutation modules \cite{Do}. In the same article, the author also defines listing modules as the indecomposable direct summands of the
mixed tensor product of symmetric and exterior powers of the natural module for the Schur algebras over fields of positive characteristics $p$.
%About one and a half decades ago, Donkin introduced signed Young modules and listing modules as the respective indecomposable direct summands of signed Young permutation modules for the symmetric groups and mixed tensor product of symmetric and exterior powers of the natural module for the Schur algebras over fields of positive characteristics $p$.
These objects generalise important classical objects such as Young modules and tilting modules respectively.
The study of the decomposition of Young permutation modules into indecomposable summands (Young modules) is central in the modular representation theory of symmetric groups.
The Krull-Schmidt multiplicities of Young modules in Young permutation modules are called $p$-Kostka numbers. The complete determination of these multiplicities is well-known to be equivalent to the complete determination of the decomposition matrix for Schur algebras.

\medskip

%
%For instance, the Krull-Schmidt multiplicities of Young modules in Young permutation modules are called the $p$-Kostka numbers and a complete determination of them is equivalent to the complete understanding of the longstanding open problem the determination of the decomposition numbers for symmetric groups.

In this article we focus on signed $p$-Kostka numbers. These numbers can be defined as the Krull-Schmidt multiplicities of signed Young modules into signed Young permutation modules, but appear in several other contexts.
For instance, signed $p$-Kostka numbers can also be regarded as multiplicities of listing modules in mixed tensor product of symmetric and exterior powers of the natural module \cite[3.1(3)]{Do}, or as the dimensions of weight spaces of irreducible modules for Schur superalgebras \cite[2.3(7)]{Do}.
%
%The primary objective of this paper is the investigation of the signed $p$-Kostka numbers which are the common numbers the Krull-Schmidt multiplicities of signed Young modules in signed Young permutation modules, multiplicities of listing modules in mixed tensor product of symmetric and exterior powers of the natural module and the dimensions of weight spaces of irreducible modules for Schur superalgebras.

Our first main result is Theorem \ref{T: Kostka reduction} below. Avoiding for the moment the introduction of the necessary techinical notation we summarise it as follows.

\begin{Main Result A}\label{MainA}
We give a closed formula for signed $p$-Kostka numbers. This reduces the computation of signed $p$-Kostka numbers to the knowledge of $p$-Kostka numbers and of projective signed $p$-Kostka numbers.
\end{Main Result A}

The formula announced above and given in Theorem \ref{T: Kostka reduction} generalizes and extends to signed $p$-Kostka numbers the well known Klyachko's multiplicity formula \cite{Klyachko}. Moreover, using our Main Result A we are able to deduce an analogue Steinberg Tensor Product Theorem for the irreducible modules for Schur superalgebras \cite[II.3.17]{Jantzen}.
This is done in Corollary \ref{C: steinberg}. We are aware that Corollary \ref{C: steinberg} can be directly obtained from Steinberg Tensor Product Theorem by counting the weight spaces. Nevertheless, we find interesting to highlight that our proof uses only techniques from the representation theory of symmetric groups.

%
%
%We proved the multiplicity formula for signed $p$-Kostka numbers generalising Klyachko's multiplicity formula and deduced Steinberg's tensor product theorem for the irreducible modules for Schur superalgebras. Although the formula can be directly obtained from Steinberg's tensor product theorem by counting the weight spaces, we demonstrated our proof using only the representation theory of the symmetric groups by looking into the Brauer quotients of signed Young and signed Young permutation modules, and deduce the tensor product theorem as a consequence.

\medskip

In Section 4 we analize the \textit{signed $p$-Kostka matrix} and we obtain the following result.
\begin{Main Result B}
We show that the signed $p$-Kostka matrix is lower unitriangular and has diagonal blocks given by the Kronecker products of some suitable $p$-Kostka matrices.
\end{Main Result B}

We refer the reader to Corollary \ref{C: signedrowremoval} for full details on our second main result. We remark that Corollary \ref{C: signedrowremoval} is obtained as a consequence of Theorem \ref{T: signedRowRemoval}.
Similarly, using Theorem \ref{T: signedRowRemoval} we are also able to explicitly describe the multiplicities of direct summands of signed Young permutation modules lying in the principal block of $F\sym{n}$, when $n$ is a multiple of $p$. This is done in Corollary \ref{C: principal block}.
%
%Using the multiplicity formula for signed $p$-Kostka numbers, we showed that the signed $p$-Kostka matrix for a positive integer $n$, both row and column labelled by bipartitions of $n$ such that the second partitions are multiples of $p$ and their sizes sum to $n$, maybe chosen such that it is lower unitriangular and has diagonal blocks given by the Kronecker products of some suitable $p$-Kostka matrices. Furthermore, when $(\alpha|\beta)$ is a bipartition of $mp$ for some $m\in\NN_0$, we explicitly described the multiplicities of direct summands of $M(\alpha|\beta)$ lying in the principal block of $F\sym{mp}$ in terms of the $p$-Kostka numbers.

\medskip

Signed Young permutation $F\sym{n}$-modules are naturally labelled by pairs of compositions $(\lambda |\mu)$ such that $|\lambda|+|\mu|=n$.
We devote the last part of the article to address the following problem.

\begin{Main Result C}
Given two pairs of compositions of $n$, we determine exactly when the corresponding signed Young permutation modules are isomorphic.
\end{Main Result C}

Our third main result is stated and proved in all details in Theorem \ref{T: char of M}.
The proof uses the signed Young Rule, recently discovered by Tan and the second author \cite{LT}.

%
%The second objective is to study the characterization of when two signed Young permutation modules are isomorphic. Two Young permutation modules are isomorphic if and only if the compositions label them can be rearranged to the same partition. For signed Young permutation modules, it is not clear when two are isomorphic. We proved that two signed Young permutation modules labeled by $(\alpha|\beta)$ and $(\lambda|\mu)$ are isomorphic if only if both $\alpha,\lambda$ and $\beta,\mu$ differ by parts of size 1, respectively.
%

\medskip

Our paper is organised as follows. In Section 2 we set up the notation and give some basic background that we will repeatedly use throughout the paper.
In Section \ref{S: Klyachko}, we prove our Main Result A.
In the first part of Section \ref{S: equal case} we generalized the row-removal formula for $p$-Kostka numbers obtained in \cite{BowGia} to the case of signed $p$-Kostka numbers.
The second part of Section \ref{S: equal case} is devoted to prove Theorem \ref{T: signedRowRemoval}, which in turn implies our Main Result B.
In Section \ref{S: not equal case}, we study Theorem \ref{T: signedRowRemoval} in a more general setting by dropping the assumption $|\beta|=p|\mu|$.
%The general case is slightly more involved and we employed the skew representations defined by James-Peel.
In Section 6, we prove Main Result C by addressing the problem of deciding when two signed Young permutation modules are isomorphic.

\section{Generalities}\label{S: pre}

Throughout this paper $F$ denotes a field of characteristic $p>0$. Whenever $A$ is a finite dimensional algebra over $F$,
an $A$-module is a finite-dimensional left $A$-module. All groups considered in this paper are finite groups.

Suppose that $V_1$ and $V_2$ are $A$-modules such that $V_1$ is isomorphic to a direct summand of $V_2$. We
write $V_1\mid V_2$. If $V_1$ is indecomposable then we denote by $[V_2:V_1]$ the Krull--Schmidt multiplicity of $V_1$
in $V_2$, that is, in every indecomposable direct sum decomposition of $V_2$, there are precisely $[V_2:V_1]$ direct summands that
are isomorphic to $V_1$.

Let $H$ be a subgroup of a group $G$ and, $V$ and $W$ be $FG$- and $FH$-modules respectively. We denote by $\res_H^G(V)$ the restriction of $V$ to $H$, and we denote by $\ind_H^G(W)$ the induction of $W$ to $G$. Suppose further that $V$ is indecomposable. A Green vertex $Q$ of $V$ is a minimal (with respect to inclusion) subgroup $Q$ of $G$ such that $V\mid \ind^G_Q\res^G_Q(V)$. In this case, a Green source with respect to $Q$ is an indecomposable $FQ$-module $S$ such that $V\mid \ind^G_QS$. %The Green vertices are all conjugate to each other and, with respect to a Green vertex, all Green sources are conjugate to each other.

If $H$ is normal in $G$ and $U$ is an $F[G/H]$-module then we denote by $\Inf_{G/H}^G(U)$ the inflation of $U$ to $G$. Similarly, if $H$ is a subgroup of $G$ acting trivially on an $FG$-module $V$, we denote by $\Def_{G/H}^GV$ the $F[G/H]$-module which is the same vector space $V$ with the action $(gH)\cdot v=gv$, i.e., the deflation of $V$ to $G/H$. Clearly, in this case, if $K\subseteq H$, then $\Def^{G/K}_{(G/K)/(H/K)}\Def^G_{G/K}V\cong \Def^G_{G/H}V$ via the identification $(G/K)/(H/K)\cong G/H$. Let $V$ and $W$ be $FG$- and $FH$-modules for some groups $G$ and $H$ respectively. The exterior tensor product $V\otimes_FW$ is an $F[G\times H]$-module in the obvious way. We shall denote this module by $V\boxtimes W$.%If $g\in G$ then we set ${}^gH:=gHg^{-1}$.

We have the following easy lemma.

\begin{lem}[{see \cite[Lemma 2.1]{GLOW}}]\label{L:trivialnormal} Let $V,W$ be $FG$-modules such that $W$ is indecomposable and suppose that $H$ is a normal subgroup of $G$ acting trivially on both the modules $V,W$. Then $[V:W]=[\Def^G_{G/H}V:\Def^G_{G/H}W]$.
\end{lem}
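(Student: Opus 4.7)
The plan is to exploit the equivalence of categories between the full subcategory of $FG$-modules on which $H$ acts trivially and the category of $F[G/H]$-modules, realized by the mutually inverse functors $\Def^G_{G/H}(-)$ and $\Inf^G_{G/H}(-)$. Since $V$ is finite-dimensional, Krull--Schmidt applies on both sides, so it suffices to show that deflation sets up a bijection between the isomorphism classes of indecomposable summands of $V$ and those of $\Def^G_{G/H} V$ which preserves the summand $W$.

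First I would fix a decomposition $V = V_1 \oplus \cdots \oplus V_k$ into indecomposable $FG$-summands. Since $H$ acts trivially on $V$, it acts trivially on each $V_i$, so each $\Def^G_{G/H} V_i$ is a well-defined $F[G/H]$-module, and applying deflation (which is exact and additive on the subcategory of modules with trivial $H$-action) yields $\Def^G_{G/H} V \cong \bigoplus_{i=1}^k \Def^G_{G/H} V_i$. Likewise $\Def^G_{G/H} W$ is well-defined by hypothesis.

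Next I would verify that each $\Def^G_{G/H} V_i$ is indecomposable as an $F[G/H]$-module. Any decomposition $\Def^G_{G/H} V_i = U_1 \oplus U_2$ inflates to a decomposition $V_i \cong \Inf^G_{G/H} U_1 \oplus \Inf^G_{G/H} U_2$ after using the canonical isomorphism $\Inf^G_{G/H} \Def^G_{G/H} V_i \cong V_i$ (valid because $H$ acts trivially on $V_i$); indecomposability of $V_i$ then forces one summand to be zero, hence so is the corresponding $U_j$.

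Finally I would show that $V_i \cong W$ as $FG$-modules if and only if $\Def^G_{G/H} V_i \cong \Def^G_{G/H} W$ as $F[G/H]$-modules. The forward direction is functoriality of $\Def^G_{G/H}$; for the converse, apply $\Inf^G_{G/H}$ and use again that $\Inf^G_{G/H} \Def^G_{G/H}(-)$ is the identity on modules with trivial $H$-action. Counting the indices $i$ with $V_i \cong W$ on each side then yields the equality $[V:W] = [\Def^G_{G/H} V : \Def^G_{G/H} W]$. I do not anticipate a genuine obstacle: the argument is a routine unwinding of the standard equivalence between $FG$-modules on which $H$ acts trivially and $F[G/H]$-modules, and the only point requiring any care is keeping track of which modules satisfy the trivial-action hypothesis that validates the use of $\Inf \circ \Def \cong \mathrm{id}$.
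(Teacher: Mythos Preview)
Your argument is correct: the equivalence of categories between $FG$-modules with trivial $H$-action and $F[G/H]$-modules (via the mutually inverse functors $\Def^G_{G/H}$ and $\Inf^G_{G/H}$) transports Krull--Schmidt decompositions back and forth, and each step you outline goes through without issue. The paper itself does not supply a proof of this lemma but simply cites \cite[Lemma~2.1]{GLOW}; your write-up is exactly the standard justification one would expect to find there.
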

%\begin{proof} Clearly, any indecomposable summand $M_i$ of $M$ gives an indecomposable $F[G/H]$-module $\overline{M_i}$. Conversely, any indecomposable $F[G/H]$-module is an indecomposable $FG$-module through inflation. Hence $[M:N]=[\overline{M}:\overline{N}]$ by the Krull--Schmidt Theorem.
%\end{proof}

\subsection{Symmetric groups and their modules}\label{noth Sn}

Let $\NN_0,\NN$ be the sets of nonnegative integers and positive integers respectively. From now on and thereafter, all the infinite sums and infinite products encountered in this paper are indeed finite sums and finite products respectively. For instance, $\sum^\infty_{i=1}\delta_i$ means there exists $r\in\NN$ such that $\delta_i=0$ for all $i>r$ and hence $\sum^\infty_{i=1}\delta_i=\sum^r_{i=1}\delta_i$.

Let $n\in \NN_0$. We denote $\Seq(n)$ for the set consisting of infinite sequences of nonnegative integers $\delta=(\delta_i)_{i=1}^\infty$ such that $n=\sum^\infty_{i=1}\delta_i$. In this case, we write $|\delta|=n$ and denote the unique sequence $\delta$ such that $|\delta|=0$ by $\varnothing$. Let $\Seq=\bigcup_{n\in\NN_0}\Seq(n)$. Let $\delta,\gamma\in\Seq$ and $q,r\in\NN_0$. We define
\begin{align*}
  \delta+\gamma&=\gamma+\delta=(\delta_i+\gamma_i)^\infty_{i=1},\\
  q\cdot \delta&=(q\delta_i)^\infty_{i=1},\\
  \delta_{\leq r}&=(\delta_i)^r_{i=1},\\
  \delta_{>r}&=(\delta_{i+r})^\infty_{i=1}.
\end{align*}

Let $n\in\NN_0$. A composition of $n$ is a sequence of nonnegative integers $\alpha=(\alpha_1,\ldots,\alpha_r)$ for some $r\in\NN$ such that $n=\sum^r_{i=1}\alpha_i$. In this case, we call $\ell(\alpha)=r$ the length of $\alpha$ and, by abuse of notation, $|\alpha|=n$. By convention, we assume that $\alpha_i=0$ if $i>r$. The set of compositions of $n$ is denoted as $\C(n)$. The Young diagram $[\alpha]$ of $\alpha$ is the set \[[\alpha]=\{(i,j)\in\NN\times\NN:1\leq j\leq \alpha_i\}.\] A composition is called a partition if the terms are non-increasing and nonzero. The set of all partitions of $n$ is denoted as $\P(n)$. By abuse of notation, we also denote the unique partition of $0$ as $\varnothing$. The set $\P(n)$ is equipped with the usual dominance order $\unrhd$.

There is an obvious inclusion of $\P(n)$ into $\Seq(n)$ by adding zeroes to the tail. Let $\lambda\in\P(n)$, $\delta\in\Seq(n)$ and $\alpha,\beta\in\C(n)$. We may write $\lambda=\delta$ if $\lambda_i=\delta_i$ for all $i\in\NN$. If the length of a composition is out of the question, we may also write $\lambda=\alpha$ if $\lambda_i=\alpha_i$ for all $i\in\NN$. The  notions $q\cdot\alpha$ and $\alpha+\beta$ are defined similarly as before. The partition obtained from $\delta$ (respectively, $\alpha$) by rearranging its parts and deleting zero entries is denoted as $\wp(\delta)$ (respectively, $\wp(\alpha)$). Suppose further that $r=\ell(\alpha)$ and $s=\ell(\beta)$. We define
\begin{align*}
\alpha\cont\beta&=(\alpha_1,\ldots,\alpha_r,\beta_1,\ldots,\beta_s),\\
\alpha\cont\delta&=(\alpha_1,\ldots,\alpha_r,\delta_1,\delta_2,\ldots),\\
\alpha\cup\beta&=\wp(\alpha\cont\beta).
\end{align*}

Let $r\in\NN_0$, $\lambda\in\P(n)$ and $k=\ell(\lambda)$. We define the partitions
\begin{align*}
  \trimt{\lambda}{r}&=(\lambda_1,\ldots,\lambda_r),\\
  \trimb{\lambda}{r}&=(\lambda_{r+1},\lambda_{r+2},\ldots,\lambda_k),
\end{align*} if $0\leq r\leq k-1$ and, $\trimt{\lambda}{r}=\lambda$ and $\trimb{\lambda}{r}=\varnothing$ if $r\geq k$. Therefore, $\lambda=\trimt{\lambda}{r}\cont\trimb{\lambda}{r}$. A pair of partitions $(\alpha,\lambda)$, not necessarily of the same size, admits a horizontal $r$-row cut if $|\trimt{\alpha}{r}|=|\trimt{\lambda}{r}|$ for some $r\in\NN_0$.

Let $p$ be a prime number. A partition $\lambda$ is $p$-restricted if $0\leq \lambda_{i+1}-\lambda_i\leq p-1$ for all $i\in\NN$. A partition $\lambda$ is $p$-regular if and only if its conjugate $\lambda'$ is $p$-retricted. The set of $p$-restricted partitions of $n$ is denoted by $\RP(n)$. The sum \[\lambda=\sum_{i=0}^\infty p^i\cdot\lambda(i)\] is the $p$-adic expansion of $\lambda$ if, for each $i\in\NN_0$, $\lambda(i)$ is a $p$-restricted partition. For technical computation, we assume that $\lambda(-1)=\varnothing$. % Notice that almost all $\lambda(i)$ are $\varnothing$ and therefore the infinite sum is a finite sum. Also, the $p$-adic expansion of a partition is unique.

Let $n\in\NN_0$. We denote the symmetric group of degree $n$ by $\sym{n}$ and its alternating subgroup by $\alt{n}$. Let $\alpha\in\C(n)$. The standard Young subgroup of $\sym{n}$ labelled by $\alpha$ is denoted by $\sym{\alpha}=\prod^\infty_{j=1}\sym{\alpha_j}$, here the $j$th factor $\sym{\alpha_j}$ acts on the set \[\left \{1+\sum^{j-1}_{i=1}\alpha_i,2+\sum^{j-1}_{i=1}\alpha_i,\ldots,\sum^{j}_{i=1}\alpha_i\right \}.\] By our convention, if $k=\ell(\alpha)$, then $\sym{\alpha}=\sym{\alpha_1}\times\cdots\times \sym{\alpha_k}$. Let $G$ be a finite group. We identify the wreath product $G\wr\sym{\alpha}$ with the direct product $\prod^\infty_{j=1}G\wr\sym{\alpha_j}$. Furthermore, if $G\leq \sym{m}$ for some $m\in\NN_0$, we view $G\wr\sym{\alpha}$ as a subgroup of $\sym{mn}$ in the obvious way.

We recall, for instance from \cite[4.1.22, 4.1.24]{JK}, the structure of the Sylow $p$-subgroups of $\sym{n}$. If $n=\sum_{i=0}^\infty a_ip^i$ is the $p$-adic expansion of $n$ where $0\leq a_i\leq p-1$ then let $P_n$ be a fixed Sylow $p$-subgroup of the Young subgroup $\prod_{i=0}^\infty (\sym{p^i})^{a_i}$ of $\sym{n}$. Thus $P_n=\prod_{i=0}^\infty (P_{p^i})^{a_i}$ where $P_{p^i}$ is a Sylow $p$-subgroup of $\sym{p^i}$. Moreover, for each $i\in\NN_0$, the Sylow $p$-subgroup $P_{p^i}$ of $\sym{p^i}$ maybe identified with the $i$-fold iterated wreath product $C_p\wr \cdots \wr C_p$. Furthermore, we write $N_n=\N_{\sym{n}}(P_n)$. Whenever $\alpha\in\C(n)$, we denote by $P_\alpha$ the fixed Sylow $p$-subgroup of $\sym{\alpha}$ given by $P_\alpha=\prod_{i=1}^\infty P_{\alpha_i}$.

We use the notation $\rho=((p^i)^{n_i})_{i=0}^\infty$ to denote an element in $\Seq$ where the first $n_0$ terms of $\rho$ are 1, the next $n_1$ terms are $p$, and so on, so that $\rho\in\Seq(n)$ if $\sum^\infty_{i=0}n_ip^i=n$. By convention, if $n=0$ then $\rho=\varnothing$. Therefore we get $P_\rho=\prod_{i=0}^\infty(P_{p^i})^{n_i}$. In particular, for $i\in\NN_0$, if $|\rho|=n$, the group $P_\rho$ has precisely
$n_i$ orbits of sizes $p^i$ acting on the set $\{1,\ldots,n\}$. We write $N_\rho=\N_{\sym{n}}(P_\rho)$ and further identify $N_\rho$ with the following subgroup of $\sym{n}$: \[N_\rho=\prod_{i=0}^\infty(N_{p^i}\wr \sym{n_i})\subseteq\prod_{i=0}^\infty\sym{n_ip^i}.\]

%\[N_\rho=\sym{n_0}\times (N_p\wr\sym{n_1})\times\cdots \times (N_{p^r}\wr \sym{n_r})\leq \sym{n_0}\times \sym{n_1p}\times \cdots\times\sym{n_rp^r}.\]

We now describe some standard modules for the symmetric group algebras. The trivial and sign representations of $F\sym{n}$ are denoted by $F(n)$ and $\sgn(n)$ respectively (or simply $F$ and $\sgn$ if it is clear in the context). For each $\lambda\in\P(n)$, we denote by $S^\lambda$ the Specht $F\sym{n}$-module labelled by $\lambda$ as defined by James in \cite[Section 4]{James}. As $\lambda$ varies over the set of $p$-regular partitions of $n$, the quotient modules $D^\lambda:=S^\lambda/\Rad(S^\lambda)$ vary over a set of representatives of the isomorphism classes of simple $F\sym{n}$-modules. Analogously,
as $\mu$ varies over the set of $p$-restricted partitions of $n$ the submodules $D_\mu:=\Soc(S^\mu)$ vary over a set of representatives of the isomorphism classes of simple $F\sym{n}$-modules as well. If $\mu\in \RP(n)$ then one has \[D_{\m(\mu)}=D_\mu\otimes\sgn\cong D^{\mu'}\] where $\mu'$ is the conjugate partition of $\mu$ and $\m$ is the Mullineux map on the $p$-restricted partitions. We refer the reader to \cite{BrundanKujawa} for the explicit map $\m$.

Let $H$ be a subgroup of $\sym{n}$. We denote the trivial and sign representations of $F\sym{n}$ restricted to the subgroup $H$ by $F(H)$ and $\sgn(H)$ respectively. In the case when $H=\sym{\alpha}$ for some $\alpha\in\C(n)$, for simplicity, we write $F(\alpha)$ and $\sgn(\alpha)$ for $F(\sym{\alpha})$ and $\sgn(\sym{\alpha})$ respectively. Let $G$ be a finite group and suppose further that $n\geq 0$. %Recall that the multiplication in the wreath product $G\wr \sym{n}$ is given by \[(g_1,\ldots,g_n;\sigma)(g_1',\ldots,g_n';\sigma')=(g_1g_{\sigma^{-1}(1)}',\ldots,g_ng_{\sigma^{-1}(n)}';\sigma\sigma')\,,\] for $(g_1,\ldots,g_n;\sigma), \, (g_1',\ldots,g_n';\sigma')\in G\wr \sym{n}$.
Let $V$ be an $FG$-module. The $n$-fold tensor product of $V$ is an $F(G\wr\sym{n})$-module with the action given by \[(g_1,\ldots,g_n;\sigma)\cdot (v_1\otimes\cdots\otimes v_n)=\sgn(\sigma)g_1v_{\sigma^{-1}(1)}\otimes\cdots\otimes g_nv_{\sigma^{-1}(n)}\] for any $(g_1,\ldots,g_n;\sigma)\in G\wr\sym{n}$ and $v_1,\ldots,v_n\in V$. We denote this module by $\widehat{V}^{\otimes n}$. By convention, $\widehat{V}^{\otimes 0}$ is the trivial module for the trivial group. Furthermore, we write $\widehat{V}^{\otimes \alpha}=\boxtimes^\infty_{i=1}\widehat{V}^{\otimes \alpha_i}$ for the $F(G\wr\sym{\alpha})$-module. %Again, this is indeed a finite outer tensor product with almost all terms are the trivial module for the trivial group.

For further background on the representation theory of the symmetric group we refer the reader to \cite{James,JK}.

\subsection{Signed Young permutation modules, listing modules and signed $p$-Kostka numbers}\label{SS: sym module}  Let $\alpha,\beta$ be compositions such that $|\alpha|+|\beta|=n$. We call $(\alpha|\beta)$ a pair of compositions of $n$. Let $\C^2(n)$ be the set of all pairs of compositions of $n$.
Moreover, let $\P^2(n)$ be the set consisting of those pairs $(\alpha|\beta)\in\C^2(n)$ such that both $\alpha$ and $\beta$ are partitions. The set $\P^2(n)$ is partially ordered by $\unrhd$ where $(\alpha|\beta)\unrhd (\lambda|\mu)$ if, for all $k\in \NN$, we have both
\begin{enumerate}[(a)]
\item $\sum^k_{i=1}\alpha_i\geq \sum^k_{i=1}\lambda_i$, and
\item $|\alpha|+\sum^k_{i=1}\beta_i\geq |\lambda|+\sum^k_{i=1}\mu_i$.
\end{enumerate} In the case $(\alpha|\beta)\unrhd (\lambda|\mu)$ but $(\alpha|\beta)\neq(\lambda|\mu)$, we write $(\alpha|\beta)\rhd (\lambda|\mu)$.

For a simple example, in $\P^2(2)$, we have \[((2)|\varnothing)\rhd ((1,1)|\varnothing)\rhd((1)|(1))\rhd (\varnothing|(2))\rhd (\varnothing|(1,1)).\]

Let $p$ be an odd prime. We denote by $\P^2_p(n)$ the subset of $\P^2(n)$ consisting of all pairs of partitions of $n$ of the form $(\lambda|p\mu)$. We view $\C(n)$ and $\P(n)$ as subsets of $\C^2(n)$ and $\P^2(n)$, respectively, by identifying $\alpha$ with $(\alpha|\varnothing)$. The identification restricts the dominance order of $\P^2(n)$ to the usual dominance order of $\P(n)$ we introduced earlier.

Let $S_F(m,n)$ be the Schur algebra over $F$ as defined in \cite{Green}. The category of finite dimensional $S_F(m,n)$-modules $\modcat{S_F(m,n)}$ is Morita equivalent to the category of polynomial $\GL_m(F)$-modules of degree $n$. Furthermore, in the case when $m\geq n$, we have the Schur functor \[f:\modcat{S_F(m,n)}\to \modcat{F\sym{n}}\] defined by $f(V)=eV$ for certain idempotent $e$ of $S_F(m,n)$. Furthermore, the functor $f$ is exact. For simplicity, we assume $m\geq n$ throughout for the Schur algebra $S_F(m,n)$.

Let $E$ be the natural $\GL_m(F)$-module, and let $S^\alpha E$ and $\bigwedge^\beta E$ be the symmetric and exterior powers of $E$ with respect to the compositions $\alpha,\beta$, respectively; namely,
\begin{align*}
S^\alpha E&=S^{\alpha_1}E\otimes \cdots\otimes S^{\alpha_{\ell(\alpha)}}E,\\
\ext^\beta E&=\ext^{\beta_1}E\otimes\cdots\otimes \ext^{\beta_{\ell(\beta)}}E.
\end{align*} Let $K(\alpha|\beta)=S^\alpha E\otimes \bigwedge^\beta E$ for each $(\alpha|\beta)\in \C^2(n)$. Then $K(\alpha|\beta)$ is a polynomial $\GL_m(F)$-module of degree $n$. By \cite[\S3]{Do}, the indecomposable summands of the mixed powers $K(\alpha|\beta)$ are called the listing modules and their isomorphism classes are labelled by the set $\P^2_p(n)$. The listing module labelled by $(\lambda|p\mu)\in\P^2_p(n)$ is denoted by $\List(\lambda|p\mu)$. Furthermore, \[K(\lambda|p\mu)\cong \List(\lambda|p\mu)\oplus C(\lambda|p\mu)\] where $C(\lambda|p\mu)$ is a direct sum of listing modules labelled by pairs $(\xi|p\zeta)\in\P^2_p(n)$ such that $(\xi|p\zeta)\rhd (\lambda|p\mu)$.

For $(\alpha|\beta)\in\C^2(n)$, one defines the signed Young permutation $F\sym{n}$-module \[M(\alpha|\beta):=\ind_{\sym{\alpha}\times\sym{\beta}}^{\sym{n}}(F(\alpha)\boxtimes \sgn(\beta))\,.\]
In the case when $\beta=\varnothing$ this yields the usual Young permutation $F\sym{n}$-module $M^\alpha=M(\alpha|\varnothing)$. Notice that $M(\alpha|\beta)\otimes\sgn\cong M(\beta|\alpha)$ and $M(\alpha|\beta)\cong M(\wp(\alpha)|\wp(\beta))$. (We refer the reader to Section 2.1 for the definition of $\wp(\alpha)$.)
 Therefore, without loss of generality, up to isomorphism, we may only consider $M(\alpha|\beta)$ when $(\alpha|\beta)\in\P^2(n)$. For $\gamma,\delta\in\Seq$, suppose that $\alpha=\gamma$ and $\beta=\delta$, we write $M(\gamma|\delta)$ for $M(\alpha|\beta)$ (and hence $M^\gamma$ for $M^\alpha$).

For each $(\lambda|p\mu)\in\P^2_p(n)$, let $Y(\lambda|p\mu)=f(\List(\lambda|p\mu))$ where $f$ is the Schur functor. By convention, $Y(\varnothing|\varnothing)$ is the trivial $F\sym{0}$-module. Following \cite[\S3]{Do}, we have $f(K(\alpha|\beta))\cong M(\alpha|\beta)$. Since $f$ is exact, we obtain that $\{Y(\lambda|p\mu):(\lambda|p\mu)\in\P^2_p(n)\}$ is a set consisting of the isomorphism classes of the indecomposable summands of the signed Young permutation modules. We call $Y(\lambda|p\mu)$ a signed Young permutation module. As such, when $(\lambda|p\mu)$ varies over the set $\P^2_p(n)$, both subsets consisting of the isomorphism classes containing $M(\lambda|p\mu)$ and $Y(\lambda|p\mu)$, respectively,  are bases for the Green ring for $F\sym{n}$ generated by all signed Young permutation modules.

In the case when $\mu=\varnothing$, one obtains $Y(\lambda|\varnothing)\cong Y^\lambda$, the usual indecomposable Young $F\sym{n}$-module labelled by  the partition $\lambda$ of $n$. The indecomposable Young $F\sym{n}$-module $Y^\lambda$ is projective if and only if $\lambda$ is a $p$-restricted partition. In this case, $Y^\lambda$ is a projective cover of the simple $F\sym{n}$-module $D_\lambda$.

For the remainder of this paper, we fix a total order on $\P^2_p(n)$ as follows. For each $n\in \NN_0$, we fix a total order $\succeq_n$ on the set $\P(n)$ refining the dominance order $\unrhd$ on $\P(n)$. For instance, we may use $\succeq_n$ as the dictionary order on $\P(n)$ (see \cite[3.4]{James}). Let $\succeq$ be a total order on the set $\P^2_p(n)$ refining the dominance order on $\P^2_p(n)$ such that $(\alpha|p\beta)\succeq (\delta|p\beta)$ if and only if $\alpha\succeq_m \delta$ where $|\alpha|=m=|\delta|$.

We remark that, since $\succeq$ refines $\unrhd$, if $(\alpha|p\beta)\succeq (\lambda|p\mu)\succeq (\delta|p\beta)$, then $(\alpha|p\beta)\unrhd (\lambda|p\mu)\unrhd (\delta|p\beta)$ and hence $\beta\unrhd \mu\unrhd \beta$ which implies that $\beta=\mu$. Also, the restriction of $\succeq$ on $\P^2_p(n)$ to the subset $\P(n)$ is precisely $\succeq_n$.

We now define the signed $p$-Kostka numbers and signed $p$-Kostka matrices.

\begin{defn}\label{defn Kostka}\
\begin{enumerate}[(i)]
\item Let $(\alpha|\beta),(\lambda|p\mu)\in\P^2(n)$. The Krull--Schmidt multiplicity
$$k_{(\alpha|\beta),(\lambda|p\mu)}=[M(\alpha|\beta):Y(\lambda|p\mu)]=[K(\alpha|\beta):\List(\lambda|p\mu)]$$
is called a signed $p$-Kostka number. If $\beta=\varnothing=\mu$ then \[k_{\alpha,\lambda}=[M^\alpha:Y^\lambda]=[S^\alpha E:P(\lambda)]\] is
called a $p$-Kostka number where $P(\lambda)$ is the projective cover of the simple $S_F(m,n)$-module $L(\lambda)$ of highest weight $\lambda$ and $m\geq n$. By convention, $k_{(\varnothing|\varnothing),(\varnothing|\varnothing)}=1=k_{\varnothing,\varnothing}$. Furthermore, if $\delta,\gamma\in\Seq$ such that $\wp(\delta)=\alpha$ and $\wp(\gamma)=\beta$, then we write \[k_{(\delta|\gamma),(\lambda|p\mu)}=k_{(\alpha|\beta),(\lambda|p\mu)}.\]

\item The signed $p$-Kostka matrix $\sgnK_n$ is the square matrix whose entries are signed $p$-Kostka numbers of the form $k_{(\alpha|p\beta),(\lambda|p\mu)}$ where $(\alpha|p\beta),(\lambda|p\mu)\in\P^2_p(n)$ with respect to the fixed total order $\succ$ on $\P^2_p(n)$. The $p$-Kostka matrix $\K_n$ is the square matrix whose entries are $p$-Kostka numbers $k_{\alpha,\lambda}$ where $\alpha,\lambda\in\P(n)$ with respect to the total order $\succ_n$ on $\P(n)$.
\end{enumerate}
\end{defn}

\begin{rem}\label{Rem: Schur super}\
  \begin{enumerate}[(i)]
    \item By \cite[\S2.3]{Do}, the number $k_{(\alpha|\beta),(\lambda|p\mu)}$ is also equal to the dimension of the $(\alpha|\beta)$-weight space $L(\lambda|p\mu)^{(\alpha|\beta)}$ of the irreducible module $L(\lambda|p\mu)$ of highest weight $(\lambda|p\mu)$ of the Schur superalgebra $S_F(a|b,n)$ where $a,b\geq n$.
    \item By definition, it is clear that $\K_n$ is the top left $(\ell\times \ell)$-submatrix of $\sgnK_n$ where $\ell=|\P(n)|$.
    \item In \cite{Lim}, for each $(\alpha|\beta)\in\P^2(n)$, the second author gave an explicit formula to write $M(\alpha|\beta)$ as a linear combination of $M(\zeta|p\eta)$'s for $(\zeta|p\eta)\in\P^2_p(n)$ in the Green ring of the group algebra $F\sym{n}$. Therefore, theoretically, all signed $p$-Kostka numbers are determined whenever the signed $p$-Kostka numbers of the form $k_{(\zeta|p\eta),(\lambda|p\mu)}$ are known. Therefore $\sgnK_n$ is only labelled by $\P^2_p(n)\times \P^2_p(n)$.
  \end{enumerate}
\end{rem}

\begin{eg}\label{Eg: signKostka6} Let $n=6$, $p=3$ and use the dictionary order. Using Magma \cite{Magma}, we compute and depict below the signed $p$-Kostka matrix $\sgnK_6$. The submatrices in red, blue and yellow colors are $\K_6$, $\K_3$ and $\K_2$ respectively. This structural property is explained in full generality in Section 4 by Corollary \ref{C: signedrowremoval}(iii) (by our convention, $\K_0=\begin{pmatrix}1\end{pmatrix}$).
\small{\begin{center}
  \begin{tikzpicture}[row sep=-5pt,column sep=-1pt]
    \matrix(m)[matrix of math nodes,ampersand replacement=\&,column 1/.style={font=\tiny, anchor=base east}]
    {((6)|\varnothing)\ \ \&1\& 0\& 0\& 0\& 0\& 0\& 0\& 0\& 0\& 0\& 0\& 0\& 0\& 0 \&0 \&0\\
    ((5,1)|\varnothing)\ \ \&0\& 1\& 0\& 0\& 0\& 0\& 0\& 0\& 0\& 0\& 0\& 0\& 0\& 0\& 0\& 0\\
    ((4,2)|\varnothing)\ \ \&0\& 1\& 1\& 0\& 0\& 0\& 0\& 0\& 0\& 0\& 0\& 0\& 0\& 0\& 0\& 0\\
    ((4,1^2)|\varnothing)\ \ \&0\& 1\& 1\& 1\& 0\& 0\& 0\& 0\& 0\& 0\& 0\& 0\& 0\& 0\& 0\& 0\\
    ((3^2)|\varnothing)\ \ \&1\& 0\& 1\& 0\& 1\& 0\& 0\& 0\& 0\& 0\& 0\& 0\& 0\& 0\& 0\& 0\\
    ((3,2,1)|\varnothing)\ \ \&0\& 1\& 2\& 0\& 0\& 1\& 0\& 0\& 0\& 0\& 0\& 0\& 0\& 0\& 0\& 0\\
    ((3,1^3)|\varnothing)\ \ \&0\& 1\& 3\& 1\& 0\& 1\& 1\& 0\& 0\& 0\& 0\& 0\& 0\& 0\& 0\& 0\\
    ((2^3)|\varnothing)\ \ \&0\& 0\& 3\& 0\& 0\& 1\& 0\& 1\& 0\& 0\& 0\& 0\& 0\& 0\& 0\& 0\\
    ((2^2,1^2)|\varnothing)\ \ \&0\& 0\& 4\& 0\& 0\& 2\& 1\& 1\& 1\& 0\& 0\& 0\& 0\& 0\& 0\& 0\\
    ((2,1^4)|\varnothing)\ \ \&0\& 0\& 6\& 0\& 0\& 3\& 3\& 1\& 3\& 1\& 0\& 0\& 0\& 0\& 0\& 0\\
    ((1^6)|\varnothing)\ \ \&0\& 0\& 9\& 0\& 0\& 4\& 6\& 1\& 9\& 4\& 1\& 0\& 0\& 0\& 0\& 0\\
    ((3)|(3))\ \ \&0\& 0\& 0\& 0\& 0\& 0\& 0\& 0\& 0\& 0\& 0\& 1\& 0\& 0\& 0\& 0\\
    ((2,1)|(3))\ \ \&0\& 0\& 0\& 0\& 0\& 0\& 1\& 0\& 1\& 0\& 0\& 0\& 1\& 0\& 0\& 0\\
    ((1^3)|(3))\ \ \&0\& 0\& 0\& 0\& 0\& 0\& 1\& 0\& 3\& 1\& 0\& 0\& 1\& 1\& 0\& 0\\
    (\varnothing|(6))\ \ \&0\& 0\& 0\& 0\& 0\& 0\& 0\& 0\& 0\& 0\& 0\& 0\& 0\& 0\& 1\& 0\\
    (\varnothing|(3^2))\ \ \&0\& 0\& 0\& 0\& 0\& 0\& 0\& 0\& 1\& 0\& 0\& 0\& 0\& 0\& 1\& 1\\
    };
  \node[fit=(m-1-2)(m-11-12),draw=red,thick,inner sep=-1pt]{};
  \node[fit=(m-12-13)(m-14-15),draw=blue,thick,inner sep=-1pt]{};
  \node[fit=(m-15-16)(m-16-17),draw=yellow,thick,inner sep=-1pt]{};
  \end{tikzpicture}
\end{center}}
\end{eg}

We end this subsection with the following general row removal formula was proved by Bowman and the first author. We remark that the case when $r=1$ was first proved by Fang-Henke-Knoenig (see \cite[Corollary 9.1]{FangHenkeKoenig}).

\begin{thm}[{\cite[Corollary 1.1]{BowGia}}]\label{T: RowRemoval} Let $\alpha,\lambda\in\P(n)$ such that the pair $(\alpha,\lambda)$ admits a horizontal $r$-row cut. Then \[k_{\alpha,\lambda}=k_{\trimt{\alpha}{r},\trimt{\lambda}{r}}k_{\trimb{\alpha}{r},\trimb{\lambda}{r}}.\]
\end{thm}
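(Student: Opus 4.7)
The plan is as follows. Set $m:=|\trimt{\alpha}{r}|=|\trimt{\lambda}{r}|$ and let $H:=\sym{m}\times\sym{n-m}$. Because $\sym{\alpha}=\sym{\trimt{\alpha}{r}}\times\sym{\trimb{\alpha}{r}}\subseteq H$, transitivity of induction yields
$$M^\alpha\;\cong\;\ind_H^{\sym{n}}\bigl(M^{\trimt{\alpha}{r}}\boxtimes M^{\trimb{\alpha}{r}}\bigr).$$
Decomposing each inner factor into Young modules via the $p$-Kostka numbers and taking Krull--Schmidt multiplicities therefore gives
$$k_{\alpha,\lambda}\;=\;\sum_{\nu,\mu}k_{\trimt{\alpha}{r},\nu}\,k_{\trimb{\alpha}{r},\mu}\,\bigl[\ind_H^{\sym{n}}(Y^\nu\boxtimes Y^\mu):Y^\lambda\bigr],$$
the sum ranging over partitions $\nu$ of $m$ and $\mu$ of $n-m$. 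The whole theorem then collapses onto a single ``orthogonality'' identity
$$\bigl[\ind_H^{\sym{n}}(Y^\nu\boxtimes Y^\mu):Y^\lambda\bigr]\;=\;\delta_{\nu,\trimt{\lambda}{r}}\,\delta_{\mu,\trimb{\lambda}{r}},$$
since granting it, only the diagonal term survives and equals $k_{\trimt{\alpha}{r},\trimt{\lambda}{r}}\,k_{\trimb{\alpha}{r},\trimb{\lambda}{r}}$. Moreover, specialising the displayed formula to $\alpha=\lambda$ forces the diagonal value of the orthogonality to be $1$ as soon as the off-diagonal vanishing has been established, so the entire burden falls on the vanishing half.

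To attack the vanishing, the natural tool is $p$-permutation/Brauer-quotient theory. Each Young module $Y^\lambda$ is a trivial-source module with vertex $P_\rho$ for a sequence $\rho$ determined by the $p$-adic expansion of $\lambda$, and its multiplicity in any $p$-permutation module is detected by the multiplicity of its Brauer correspondent inside the Brauer quotient at $P_\rho$, via Klyachko's classical formula. Since the Brauer functor commutes with induction and with exterior tensor products (Brou\'e's formula), the quotient $\bigl(\ind_H^{\sym{n}}(Y^\nu\boxtimes Y^\mu)\bigr)(P_\rho)$ is expressible in terms of the Brauer quotients of $Y^\nu$ and $Y^\mu$ separately inside the two factors of $H$.

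The main obstacle is the ensuing combinatorial step: showing that the row-cut condition $|\trimt{\alpha}{r}|=|\trimt{\lambda}{r}|$ forces $\nu=\trimt{\lambda}{r}$ and $\mu=\trimb{\lambda}{r}$ whenever $Y^\lambda$ arises as a summand of $\ind_H^{\sym{n}}(Y^\nu\boxtimes Y^\mu)$. I expect this to reduce to comparing Klyachko-type data on either side of $H$ and checking that the $p$-local weight data of $\lambda$ splits across the partition $(m,n-m)$ in a unique way. The case $r=1$ \cite[Corollary 9.1]{FangHenkeKoenig} would serve both as a base case for an induction and as a template for the required Brauer-quotient comparison; promoting it to arbitrary horizontal cuts requires that the row-cut assumption enforces an alignment preventing any ``mixing'' between the top $r$ rows and the bottom rows of $\lambda$ that could still yield $Y^\lambda$ after induction from $H$.
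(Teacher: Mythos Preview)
This statement is not proved in the paper; it is quoted as \cite[Corollary 1.1]{BowGia} and used as a black box. So there is no ``paper's own proof'' to compare against. The actual argument in \cite{BowGia} does not proceed via the symmetric-group side at all: it establishes a characteristic-free algebra isomorphism between generalised Schur algebras attached to the two saturated sets corresponding to the top $r$ rows and the bottom rows, and reads off the multiplicativity of $p$-Kostka numbers from that. Your proposed route via Brauer quotients and Klyachko data is a genuinely different line of attack.

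However, your reduction has a real problem. The ``orthogonality'' identity
\[
\bigl[\ind_H^{\sym{n}}(Y^\nu\boxtimes Y^\mu):Y^\lambda\bigr]=\delta_{\nu,\trimt{\lambda}{r}}\,\delta_{\mu,\trimb{\lambda}{r}}
\]
is simply false for general $\nu\vdash m$, $\mu\vdash n-m$. In characteristic $p>n$ the left-hand side is the Littlewood--Richardson coefficient $c^\lambda_{\nu\mu}$, and these are certainly not Kronecker deltas: e.g.\ for $\lambda=(2,1,1)$, $r=2$, take $\nu=(1,1,1)$, $\mu=(1)$; then $c^{(2,1,1)}_{(1,1,1),(1)}=1$ although $\nu\neq\trimt{\lambda}{2}=(2,1)$. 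What saves your sum in practice is that only $\nu$ with $\ell(\nu)\le r$ contribute (since $k_{\trimt{\alpha}{r},\nu}=0$ otherwise), so the identity you actually need is the restricted one with $\ell(\nu)\le r$. That restricted statement may well be true, but it is not a lemma sitting on the shelf: it is essentially equivalent in difficulty to the theorem itself. Your Brauer-quotient sketch does not make visible how the constraint $\ell(\nu)\le r$ interacts with the vertex data of $Y^\lambda$ to force $\nu=\trimt{\lambda}{r}$; the Mackey decomposition of $\bigl(\ind_H^{\sym{n}}(Y^\nu\boxtimes Y^\mu)\bigr)(P_\rho)$ mixes the two tensor factors across many double cosets, and it is not clear that the ``alignment'' you hope for actually materialises. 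Note too that the $r=1$ case of Fang--Henke--Koenig is itself proved on the Schur-algebra side, so it does not furnish a symmetric-group template to induct from.
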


%Following \cite[Theorem 3.18]{DL}, we have the following corollary.

%\begin{cor}\label{C: twist Young} Let $(\alpha|\beta)\in\C^2(n)$ and $(\lambda|p\mu)\in\P^2_p(n)$. Then \[k_{(\alpha|\beta),(\lambda|p\mu)}=k_{(\beta|\alpha),(\m(\lambda(0))+p\mu|\lambda-\lambda(0))}.\]
%\end{cor}

%\subsection{The Schur algebra and the Schur functor}

\subsection{Brauer correspondence for $p$-permutation modules}

We recall now the definition and the basic properties of the Brauer quotients for $FG$-modules. Let $G$ be a finite group. Given an $FG$-module $V$ and $Q$ a $p$-subgroup of $G$, the set of fixed points $V$ of $Q$ is denoted by \[V^Q=\{v\in V\ :\ gv=v\ \text{for all}\ g\in Q \}.\] It is easy to see that $V^Q$ is an
$F\N_G(Q)$-module on which $Q$ acts trivially. For $P$ a proper subgroup of $Q$, the relative trace map $\Tr_P^Q:V^P\rightarrow V^Q$ is the linear map defined by $$\Tr_P^Q(v)=\sum_{g\in Q/P}gv,$$ where $Q/P$ is a left transversal of $P$ in $Q$. The map is independent of the choice of the left transversal. We observe that $$\Tr^Q(V)=\sum_{P<Q}\Tr_P^Q(V^P)$$ is an $F\N_G(Q)$-module on which $Q$ acts trivially. Therefore we can define the $F[\N_G(Q)/Q]$-module called the Brauer quotient of $V$ with respect to $Q$ by \[V(Q)=V^Q/\Tr^Q(V).\]

An $FG$-module $V$ is called a $p$-permutation module if for every Sylow $p$-subgroup $P$ of $G$ there exists an $F$-linear basis $\mathcal{B}_P$ of $V$ that is permuted by $P$, or equivalently, $V$ has trivial Green source. It follows from the definition that the class of $p$-permutation modules is closed under taking direct sum, direct summand, tensor product, restriction and induction. If $V$ is an indecomposable $FG$-module and $Q$ is a $p$-subgroup of $G$, then $V(Q)\neq 0$ implies that $Q$ is contained in a Green vertex of $V$. Brou\'e proved in \cite{Broue} that the converse holds in the case of $p$-permutation modules.

\begin{thm}[{\cite[Theorem 3.2]{Broue}}]\label{BT2}
Let $V$ be an indecomposable $p$-permutation module and $Q$ be a Green vertex of $V$. Let $P$ be a $p$-subgroup of $G$, then $V(P)\neq 0$ if and only if $P\leq {}^gQ$ for some $g\in G$.
\end{thm}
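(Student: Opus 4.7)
The plan is to prove the two implications separately. For the forward direction, suppose $V(P) \neq 0$. The paragraph immediately preceding the statement records that for any indecomposable $FG$-module, non-vanishing of a Brauer quotient $V(R)$ forces $R$ to lie in some Green vertex of $V$; since all Green vertices of $V$ are $G$-conjugate to $Q$, this gives $P \leq {}^g Q$ for some $g \in G$.

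For the reverse direction, I would begin by observing that the map $v \mapsto g^{-1} v$ induces a canonical isomorphism $V({}^g P) \cong V(P)$, so after replacing $P$ by ${}^{g^{-1}} P$ it suffices to treat the case $P \leq Q$. The argument then hinges on the following combinatorial description of $V(R)$ for any $p$-subgroup $R$. Fix a Sylow $p$-subgroup $S$ of $G$ containing $Q$ together with a basis $\mathcal{B}$ of $V$ permuted by $S$. One verifies that $V^R$ is $F$-spanned by the $R$-orbit sums on $\mathcal{B}$; any orbit of size greater than one coincides with $\Tr_{R'}^R(b)$, where $R'$ is the stabiliser of any $b$ in the orbit, and hence lies in $\Tr^R(V)$, while for $b \in \mathcal{B}^R$ every relative trace $\Tr_{R'}^R$ with $R' < R$ contributes $[R:R']\, b \equiv 0 \pmod p$ to the coefficient of $b$. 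Consequently $V(R)$ has an $F$-basis consisting of the images of $\mathcal{B}^R$, and in particular $V(R) \neq 0$ if and only if $\mathcal{B}^R \neq \varnothing$.

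Since $P \leq Q$ implies $\mathcal{B}^Q \subseteq \mathcal{B}^P$, it suffices to produce a single $Q$-fixed element of $\mathcal{B}$. For this I would invoke the vertex property: because $Q$ is a vertex of $V$, we have $V \mid \ind_Q^G \res_Q^G V$, and decomposing $\res_Q^G V$ as a $Q$-permutation module along the $Q$-orbits on $\mathcal{B}$ yields an isomorphism $\res_Q^G V \cong \bigoplus_i \ind_{Q_i}^Q F$, where the $Q_i \leq Q$ are the various point-stabilisers. Thus $V$ is a direct summand of $\bigoplus_i \ind_{Q_i}^G F$, and as $V$ is indecomposable it is a summand of $\ind_{Q_i}^G F$ for some $i$. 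If every $Q_i$ were a proper subgroup of $Q$, then Higman's criterion would force a vertex of $V$ to be $G$-conjugate to a subgroup of $Q_i$, contradicting the minimality of $Q$. Hence some $Q_i$ equals $Q$, and the corresponding singleton $Q$-orbit supplies the desired element of $\mathcal{B}^Q$.

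The principal obstacle is the combinatorial key lemma in the second paragraph: one must carefully control the relative traces $\Tr_{R'}^R$ across all intermediate subgroups $R' < R$ to confirm that no element of $\Tr^R(V)$ has a non-zero coefficient on any $R$-fixed basis vector. Once this is in hand, the vertex-theoretic step of the third paragraph becomes a routine application of Higman's criterion, and both implications fall out immediately.
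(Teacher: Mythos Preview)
The paper does not supply its own proof of this statement: Theorem~\ref{BT2} is quoted from Brou\'e's article \cite{Broue} and is used as a black box, so there is nothing in the paper to compare your argument against.

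That said, your proposal is essentially Brou\'e's original argument and is correct. The forward implication is exactly the general fact recorded in the paragraph preceding the theorem. For the converse, your combinatorial key lemma---that for a $p$-permutation module with permutation basis $\mathcal{B}$ the Brauer quotient $V(R)$ has $F$-basis the images of $\mathcal{B}^R$---is the standard ingredient, and your justification is sound: the only point to be careful about is that an $R'$-orbit sum supported away from a fixed vector $b\in\mathcal{B}^R$ cannot contribute to $b$ under $\Tr_{R'}^R$, but this is immediate since the $R$-orbit of $b$ is $\{b\}$ itself. The final step, producing a $Q$-fixed basis vector from the vertex condition via Higman's criterion, is also standard and correctly argued.
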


Another important tool we will be using to study the signed $p$-Kostka numbers is the Brauer correspondence of $p$-permutation modules developed by Brou\'e as follows.

\begin{thm}[{\cite[Theorem 3.2 and 3.4]{Broue}}]\label{BC1}
An indecomposable $p$-permutation module $V$ has Green vertex $Q$ if and only if $V(Q)$ is a projective $F[\N_G(Q)/Q]$-module. Furthermore, we have the following statements.
\begin{enumerate}[(i)]
\item The Brauer map sending $V$ to $V(Q)$ is a
bijection between the isomorphism classes of the indecomposable $p$-permutation
$FG$-modules with Green vertex $Q$ and the isomorphism classes of the indecomposable
projective $F[\N_G(Q)/Q]$-modules. Furthermore, $\Inf^{\N_G(Q)}_{\N_G(Q)/Q}V(Q)$ is the
Green correspondent of $V$ with respect to the subgroup $\N_G(Q)$.
\item Let $V$ be a $p$-permutation $FG$-module and let $U$ be an indecomposable $FG$-module with Green vertex $Q$. Then $U$ is a direct summand of $V$ if and only if $U(Q)$ is a direct summand of $V(Q)$. Moreover, \[[V:U]=[V(Q):U(Q)].\]
%$E$ an indecomposable projective $F(N_G(Q)/Q)$-module. Then $E$ is a direct summand of $V(Q)$ if and only if its correspondent $U$ $(\text{i.e the $FG$-module $U$ such that}\ U(Q)\cong E)$ is a direct summand of $V$. Moreover, the multiplicity of $U$ as a direct summand of $V$ equals the multiplicity of $E$ as a direct summand of $V(Q)$; namely $[$.
\end{enumerate}
\end{thm}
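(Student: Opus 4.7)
The plan is to reduce everything to a concrete basis computation. Fix a Sylow $p$-subgroup $P$ of $G$ containing $Q$, and choose a $P$-stable basis $\mathcal{B}$ of the $p$-permutation module $V$. First I would verify the standard fact that $V^Q$ has as $F$-basis the $Q$-orbit sums on $\mathcal{B}$, and that $\Tr^Q(V)$ is spanned by the orbit sums of $Q$-orbits of size strictly greater than $1$, so that $V(Q)$ has as $F$-basis the set $\mathcal{B}^Q$ of $Q$-fixed basis vectors; in particular $V(Q)$ is itself a $p$-permutation $F[\N_G(Q)/Q]$-module. Applied to the universal case $V=\ind_Q^G F$, the same calculation gives $(\ind_Q^G F)(Q)\cong F[\N_G(Q)/Q]$ as an $F[\N_G(Q)/Q]$-module, because the $Q$-fixed cosets $gQ$ are precisely those with $g\in \N_G(Q)$.

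Next I would prove the ``only if'' half of the equivalence. If $V$ is an indecomposable $p$-permutation $FG$-module with Green vertex $Q$, then since its Green source is the trivial $FQ$-module, $V$ is a direct summand of $\ind_Q^G F$; additivity of the Brauer construction then exhibits $V(Q)$ as a direct summand of $(\ind_Q^G F)(Q)\cong F[\N_G(Q)/Q]$, hence projective. For the converse, let $V$ be indecomposable $p$-permutation with vertex $R$; by Theorem~\ref{BT2}, $V(Q)\neq 0$ forces $Q\leq {}^g R$ for some $g\in G$, and after conjugation we may assume $Q\leq R$. If $R\supsetneq Q$, I would show $V(Q)$ cannot be projective by means of a second Brauer construction: for any $p$-subgroup $\bar H\leq \N_G(Q)/Q$ with preimage $H$ containing $Q$, one checks on the permutation basis that $V(Q)(\bar H)\cong V(H)$, while every projective $F[\N_G(Q)/Q]$-module has vanishing Brauer quotient at each non-trivial $p$-subgroup of $\N_G(Q)/Q$ (being a summand of the regular representation, on which a non-trivial $p$-subgroup acts freely by left multiplication). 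Taking $H$ with $Q\subsetneq H\leq R$, after possibly replacing $R$ by an $\N_G(Q)$-conjugate so that such an $H$ exists inside $\N_G(Q)$, Theorem~\ref{BT2} yields $V(H)\neq 0$, contradicting projectivity of $V(Q)$.

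For statement (i), the bijection follows by combining the equivalence with classical Green correspondence. If $V$ has vertex $Q$, its Green correspondent $V'$ in $\N_G(Q)$ also has vertex $Q$, so $V'(Q)$ is a projective $F[\N_G(Q)/Q]$-module by the direction already proved. The decomposition $\ind_{\N_G(Q)}^G V'\cong V\oplus W$, in which every indecomposable summand of $W$ has vertex strictly smaller than $Q$ up to $G$-conjugacy, combined with Theorem~\ref{BT2} (which kills the Brauer quotient of $W$ at $Q$), identifies $V(Q)\cong V'(Q)$. Thus the Brauer map $V\mapsto V(Q)$ is the composition of Green correspondence with deflation, and its bijectivity follows from the bijectivity of Green correspondence together with the standard fact that $\Inf_{\N_G(Q)/Q}^{\N_G(Q)}$ induces a bijection between indecomposable projective $F[\N_G(Q)/Q]$-modules and indecomposable $F\N_G(Q)$-modules with vertex $Q$ and trivial source; the identification $\Inf_{\N_G(Q)/Q}^{\N_G(Q)} V(Q)\cong V'$ is then immediate.

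Finally, part (ii) reduces to (i) by Krull--Schmidt: writing $V=\bigoplus_i U_i^{\oplus m_i}$ with each $U_i$ indecomposable, additivity gives $V(Q)=\bigoplus_i U_i(Q)^{\oplus m_i}$; summands whose vertex contains no $G$-conjugate of $Q$ contribute $U_i(Q)=0$ by Theorem~\ref{BT2}, whereas those $U_i$ with vertex exactly $Q$ yield by (i) pairwise non-isomorphic indecomposable projectives, so the multiplicity formula $[V:U]=[V(Q):U(Q)]$ follows. The main technical obstacle, I expect, is carefully verifying the two identifications $V(Q)(\bar H)\cong V(H)$ in the second paragraph and $V(Q)\cong V'(Q)$ in the third; both of these require tracking the interplay between the permutation-basis description of the Brauer quotient and the Mackey formula controlling the summands of induction from $\N_G(Q)$ to $G$.
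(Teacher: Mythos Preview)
The paper does not give its own proof of this theorem; it is quoted verbatim from Brou\'e \cite{Broue} and used as a black box. So there is no proof in the paper to compare against, and your sketch has to be assessed as a reconstruction of Brou\'e's argument.

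Your treatment of the equivalence and of part (i) is essentially the standard one and is sound. One small point: in (i) it is cleaner to use the restriction decomposition $\res^G_{\N_G(Q)}V\cong V'\oplus Y$, in which every summand of $Y$ has vertex a proper subgroup of $Q$, rather than the induction decomposition; this yields $V(Q)=(\res^G_{\N_G(Q)}V)(Q)=V'(Q)$ immediately, without any Mackey-type computation for $(\ind_{\N_G(Q)}^G V')(Q)$.

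There is, however, a genuine gap in your argument for (ii). In the decomposition $V=\bigoplus_i U_i^{\oplus m_i}$ you treat the $U_i$ whose vertex contains no $G$-conjugate of $Q$ (giving $U_i(Q)=0$) and those with vertex exactly $Q$ (giving pairwise non-isomorphic indecomposable projectives), but you omit the $U_i$ whose vertex $R$ strictly contains a $G$-conjugate of $Q$. For such $U_i$ one has $U_i(Q)\neq 0$, and by the equivalence $U_i(Q)$ is \emph{not} projective; but this does not prevent $U_i(Q)$ from having the indecomposable projective $U(Q)$ as a direct summand, and that is exactly what must be excluded. The naive attempt---arguing that $U_i(Q)$ is a summand of $(\ind_R^G F)(Q)\cong\bigoplus_g\ind_{\N_{{}^gR}(Q)/Q}^{\N_G(Q)/Q}F$ and that each inducing subgroup $\N_{{}^gR}(Q)/Q$ is a nontrivial $p$-group---does not finish the job, because $\ind_S^{G'}F$ for a nontrivial $p$-subgroup $S$ of $G'$ can perfectly well have projective summands (for instance $\ind_{\langle(12)\rangle}^{\sym{3}}F$ in characteristic $2$ contains the defect-zero projective $D^{(2,1)}$).

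What actually closes the gap is the Burry--Carlson--Puig theorem: if an indecomposable direct summand of $\res^G_{\N_G(Q)}U_i$ has vertex $Q$, then $U_i$ itself has vertex $Q$. Combined with the observation (using $Q\trianglelefteq\N_G(Q)$) that each indecomposable summand $W$ of $\res^G_{\N_G(Q)}U_i$ on which $Q$ acts has $W(Q)=\Def^{\N_G(Q)}_{\N_G(Q)/Q}W$ indecomposable, with $W(Q)$ projective precisely when $W$ has vertex $Q$, one gets $[U_i(Q):U(Q)]=[\res^G_{\N_G(Q)}U_i:U']=0$ whenever the vertex of $U_i$ properly contains $Q$. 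You should either invoke Burry--Carlson--Puig explicitly or supply this missing step.
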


%We conclude the section by recalling a lemma, that will be important in the proof of Theorem \ref{tensor}.
%\begin{lem}[{\cite[Lemma 2.2]{Rouquier}}]\label{otimesBroue} Let $M_1,M_2$ be $p$-permutation $FG$-modules, and let $P$ be a $p$-subgroup of $G$. Then \[(M_1\otimes M_2)(P)\cong M_1(P)\otimes M_2(P).\]
%\end{lem}

\subsection{Brauer correspondences for signed Young and signed Young permutation modules}

In this subsection, we describe the Brauer correspondents of signed Young and signed Young permutation modules. Recall that, for each $k\in \NN_0$, $P_k$ is the fixed Sylow $p$-subgroup of $\sym{k}$ and $N_k=\N_{\sym{k}}(P_k)$.

\begin{defn}\label{D: bifunctor} Let $p$ be an odd prime, $s,n\in \NN_0$ such that $s\leq n$, $U$ and $V$ be $F\sym{s}$- and $F\sym{n-s}$-modules respectively, and let $X$ be an $FG$-module. We define the $F[G\wr\sym{n}]$-module \[\G_X(U,V)=\ind^{G\wr\sym{n}}_{G\wr(\sym{s}\times\sym{n-s})} \left ((\Inf^{G\wr\sym{s}}_{\sym{s}}U)\boxtimes ((\Inf^{G\wr\sym{n-s}}_{\sym{n-s}}V)\otimes \widehat{X}^{\otimes n-s})\right ).\] For each $k\in\NN_0$, we write \[\G_k(U,V)=\G_{\Def^{N_k}_{N_k/P_k}\sgn(N_k)}(U,V).\]
\end{defn}

Notice that, if $H$ acts trivially on $X$, then subgroup $H^{n}$ of the base group of $G\wr\sym{n}$ acts trivially on $\G_X(U,V)$ and we have $\G_{\Def^G_{G/H}X}(U,V)\cong \Def^{G\wr\sym{n}}_{(G/H)\wr\sym{n}}\G_X(U,V)$.  Let $\alt{k}$ be the alternating subgroup of $\sym{k}$. For $k\geq 2$, since $\N_{\alt{k}}(P_k)$ acts trivially on $\sgn(N_k)$ and via the identification $(N_k/P_k)/(\N_{\alt{k}}(P_k)/P_k)\cong C_2$, we therefore have
\begin{align}\label{Eq: 1}
\Def^{(N_k/P_k)\wr\sym{n}}_{((N_k/P_k)/(\N_{\alt{k}}(P_k)/P_k))\wr\sym{n}}\G_k(U,V)&\cong  \G_{\Def^{N_k}_{(N_k/P_k)/(\N_{\alt{k}}(P_k)/P_k)}\sgn(N_k)}(U,V)\notag\\
&\cong \G_{\sgn(C_2)}(U,V)\cong \G_2(U,V).
\end{align}

% \begin{enumerate}[(i)]
 %  \item Let $\F$ be the bifunctor sending the pair $(U,V)$ to the $F[C_2\wr\sym{m+n}]$-module \[\F(U,V)=\ind^{C_2\wr\sym{m+n}}_{C_2\wr(\sym{m}\times\sym{n})} \left ((\Inf^{C_2\wr\sym{m}}_{\sym{m}}U)\boxtimes ((\Inf^{C_2\wr\sym{n}}_{\sym{n}}V)\otimes \widehat{\sgn(2)}^{\otimes n})\right ).\]
  % \item Let $\G_k$ be the bifunctor sending the pair $(U,V)$ to the $F[(N_k/P_k)\wr \sym{m+n}]$-module \[\G_k(U,V)=\Def^{N_k\wr\sym{m+n}}_{(N_k/P_k)\wr \sym{m+n}}\ind_{N_k\wr(\sym{m}\times \sym{n})}^{N_k\wr\sym{m+n}}\left( (\Inf^{N_k\wr\sym{m}}_{\sym{m}}U)\boxtimes \bigl( (\Inf^{N_k\wr\sym{n}}_{\sym{n}}V)\otimes \widehat{\sgn(N_k)}^{\otimes n}\big )\right )\] since the subgroup $(P_k)^{m+n}$ of the base group of $N_k\wr\sym{m+n}$ acts trivially on the induced module in the display equation above.
% \end{enumerate}

%Let $\alt{k}$ be the alternating subgroup of $\sym{k}$. Then, for $k\geq 2$, via the canonical surjection $(N_k/P_k) \wr \sym{m+n} \rightarrow ((N_k/P_k)\wr \sym{m+n})/(\N_{\alt{k}}(P_k)/P_k)^{m+n}\cong C_2\wr \sym{m+n}$, we have \[\F=\Def^{(N_k/P_k) \wr \sym{m+n}}_{C_2\wr\sym{m+n}}\circ\ \G_k.\] %\[\xymatrix{\modcat{F\sym{m}}\times\modcat{F\sym{n}}\ar[rr]^-{\F}\ar[d]^-{\G_k}&&\modcat{F[C_2\wr\sym{m+n}]}\\ \modcat{F[(N_k/P_k)\wr \sym{m+n}]}\ar[rru]_-{\Def^{(N_k/P_k) \wr \sym{m+n}}_{C_2\wr\sym{m+n}}}&&}\]

\begin{defn}\label{D: module W} Let $(\alpha|\beta)\in \C^2(n)$ and $\rho=((p^i)^{n_i})^\infty_{i=0}\in\Seq(n)$.
\begin{enumerate}[(i)]
  \item We denote by $\Lambda((\alpha|\beta),\rho)$ the set of tuples $(\boldsymbol{\gamma}|\boldsymbol{\delta})=((\b{\gamma}^{(i)})^\infty_{i=0}|(\b{\delta}^{(i)})^\infty_{i=0})$ such that
\begin{enumerate}[(a)]
\item for each $i\in\NN_0$, $\b{\gamma}^{(i)},\b{\delta}^{(i)}\in\Seq$,
\item $\alpha=\sum_{i=0}^\infty p^i\cdot\b{\gamma}^{(i)}$, $\beta=\sum_{i=0}^\infty p^i\cdot \b{\delta}^{(i)}$, and
\item for each $i\in\NN_0$, $|\b{\gamma}^{(i)}|+|\b{\delta}^{(i)}|=n_i$.
\end{enumerate} By our convention, $\Lambda((\varnothing|\varnothing),\varnothing)=\{(\pmb{\varnothing}|\pmb{\varnothing})\}$ where $\pmb{\varnothing}=(\varnothing)^\infty_{i=1}$.
  \item For each $(\boldsymbol{\gamma}|\boldsymbol{\delta})\in \Lambda((\alpha|\beta),\rho)$, we define the $F[N_\rho/P_\rho]$-module \[W(\b{\gamma}|\b{\delta})=\boxtimes^\infty_{i=0}\G_{p^i}(M^{\b{\gamma}^{(i)}},M^{\b{\delta}^{(i)}}).\] Again, we observe that this is indeed a finite outer tensor product because, for $p^i>n$, we have $\b{\gamma}^{(i)}=\varnothing=\b{\delta}^{(i)}$.%Using the identification, we may simply write $W(\b{\gamma}|\b{\delta})=\boxtimes^\infty_{i=0}\G_{p^i}(M^{\b{\gamma}^{(i)}},M^{\b{\delta}^{(i)}})$.
\end{enumerate}
\end{defn}

%We may now describe the Brauer correspondents of signed Young permutation modules with respect to subgroups of the form $P_\rho$ where $\rho=(1^{n_0},p^{n_1},\ldots,(p^r)^{n_r})$.

Given $(\alpha|\beta)$ and $\rho$ as in the definition above, the following describes the structure of the Brauer quotient $M(\alpha|\beta)(P_\rho)$.% as an $F[N_\rho/P_\rho]$-module.% has been determined in \cite[Proposition~3.12]{GLOW} which we shall now describe. %We need a notation.

\begin{prop}[{\cite[Proposition 3.12]{GLOW}}]\label{P: Brauer M}
Let $\rho=((p^i)^{n_i})^\infty_{i=0}\in\Seq(n)$ and let $(\alpha|\beta)\in \C^2(n)$. Then we have an isomorphism of $F[N_\rho/P_\rho]$-modules $$M(\alpha|\beta)(P_\rho)\cong \bigoplus_{(\b{\gamma}|\b{\delta})\in \Lambda((\alpha|\beta),\rho)}W(\b{\gamma}|\b{\delta})\,.$$
\end{prop}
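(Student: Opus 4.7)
My plan is to compute $M(\alpha|\beta)(P_\rho)$ via the Mackey decomposition applied to $\res^{\sym{n}}_{N_\rho}M(\alpha|\beta)$ followed by the Brauer functor. Since $p$ is odd, any $p$-subgroup of $\sym{\beta}$ lies inside the alternating group, so $\sgn(\beta)$ restricts trivially to it; hence $M(\alpha|\beta)$ is a $p$-permutation module whose standard basis is permuted without signs by $P_\rho$. Setting $H=\sym{\alpha}\times\sym{\beta}$, applying Mackey and then $(-)(P_\rho)$ yields
\[
M(\alpha|\beta)(P_\rho)\;\cong\;\bigoplus_{g\in\mathcal{D}}\ind_{(N_\rho\cap{}^gH)/P_\rho}^{N_\rho/P_\rho}\bigl({}^g(F(\alpha)\boxtimes\sgn(\beta))\bigr)(P_\rho),
\]
where $\mathcal{D}$ is a set of $(N_\rho,H)$-double coset representatives in $\sym{n}$ with $P_\rho\subseteq{}^gH$; the remaining summands vanish because no basis vector they support is $P_\rho$-fixed, cf.\ \cite[Theorem 3.2]{Broue}.

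I would then establish a bijection $\mathcal{D}\leftrightarrow\Lambda((\alpha|\beta),\rho)$. The condition $P_\rho\subseteq{}^gH$ forces each $P_\rho$-orbit on $\{1,\ldots,n\}$ to lie entirely in a single part of the composition $\alpha$ or of $\beta$. Counting, for each pair $i,j$, the number $\b{\gamma}^{(i)}_j$ (resp.\ $\b{\delta}^{(i)}_j$) of $P_\rho$-orbits of size $p^i$ that fall into the $j$-th part of $\alpha$ (resp.\ $\beta$) yields a tuple obeying the three conditions defining $\Lambda((\alpha|\beta),\rho)$; conversely, every such tuple is realized by a unique $N_\rho$-orbit of $H$-cosets.

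For the final step, write $G=N_{p^i}/P_{p^i}$ and $X_i=\Def^{N_{p^i}}_G\sgn(N_{p^i})$ at each level $i$. The intersection $N_\rho\cap{}^gH$ factors over levels as $(N_{p^i}\wr\sym{\b{\gamma}^{(i)}})\times(N_{p^i}\wr\sym{\b{\delta}^{(i)}})$, and the restriction of ${}^g(F(\alpha)\boxtimes\sgn(\beta))$ is trivial on the $\b{\gamma}^{(i)}$-factor and equal to the restriction of $\sgn(\sym{n})$ on the $\b{\delta}^{(i)}$-factor. Applying $P_\rho$-fixed points and deflating, the latter descends to the character on $G\wr\sym{\b{\delta}^{(i)}}$ given by $X_i$ on each base factor and by $\sgn$ on the top, i.e., to the restriction of $\widehat{X_i}^{\otimes|\b{\delta}^{(i)}|}$ from $G\wr\sym{|\b{\delta}^{(i)}|}$. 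Using the projection formula and transitivity of induction, first inducing to $G\wr(\sym{|\b{\gamma}^{(i)}|}\times\sym{|\b{\delta}^{(i)}|})$ gives $(\Inf M^{\b{\gamma}^{(i)}})\boxtimes(\Inf M^{\b{\delta}^{(i)}}\otimes\widehat{X_i}^{\otimes|\b{\delta}^{(i)}|})$, and a final induction to $G\wr\sym{n_i}$ produces $\G_{p^i}(M^{\b{\gamma}^{(i)}},M^{\b{\delta}^{(i)}})$ by Definition~\ref{D: bifunctor}. Taking the outer tensor product over $i$ assembles these into $W(\b{\gamma}|\b{\delta})$.

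The main obstacle is the careful propagation of the sign character through the wreath products and the deflation. The decisive arithmetic input is that $p^i$ is odd for all $i$ when $p$ is odd, so a transposition of two size-$p^i$ orbits in $\sym{n}$ is an odd permutation; hence the restriction of $\sgn(\sym{n})$ acts by the sign character on the block-permuting $\sym{|\b{\delta}^{(i)}|}$ (rather than trivially), exactly matching the sign twist in $\widehat{X_i}^{\otimes|\b{\delta}^{(i)}|}$ built into the definition of $\G_{p^i}$.
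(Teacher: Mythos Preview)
The paper does not supply its own proof of this proposition; it simply quotes the result from \cite[Proposition~3.12]{GLOW}. Your argument is correct and is essentially the same approach taken there: apply the standard formula for the Brauer quotient of an induced $p$-permutation module (Mackey followed by the Brauer construction), parametrise the surviving $(N_\rho,\sym{\alpha}\times\sym{\beta})$-double cosets by $\Lambda((\alpha|\beta),\rho)$, and identify each summand with $W(\b{\gamma}|\b{\delta})$ via the tensor identity and transitivity of induction. Your handling of the sign propagation---using that $p$ is odd so that $P_\rho$ lies in the alternating group and block transpositions of size $p^i$ are odd permutations---is exactly the point that makes the $\widehat{X_i}^{\otimes|\b{\delta}^{(i)}|}$ twist appear, and this matches the construction in Definition~\ref{D: bifunctor}.
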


To describe the Brauer correspondents of signed Young modules, we introduce the following notation.
We invite the reader to recall the definition of $\P^2_p(n)$, given at the start of Section \ref{SS: sym module}.

\begin{defn}\label{D: rho} For each $(\lambda|p\mu)\in \P^2_p(n)$, let
\begin{equation*}\label{eqn padic}
\lambda=\sum_{i=0}^\infty p^i\cdot \lambda(i)\quad \text{ and }\quad \mu=\sum_{i=0}^\infty p^i\cdot \mu(i)\,,
\end{equation*} be the $p$-adic expansions of $\lambda$ and $\mu$ respectively and let $n_i=|\lambda(i)|+|\mu(i-1)|$ for all $i\in\NN_0$ (recall that, by convention, $\mu(-1)=\varnothing$).% and assume that $r=\ell(\lambda|p\mu)$ is the least number such that $n_r\neq 0$.
\begin{enumerate}[(i)]
\item Let $\Rho:\P^2_p(n)\to \Seq(n)$ be the function defined by $\Rho(\lambda|p\mu)=((p^i)^{n_i})^\infty_{i=0}\in\Seq(n)$.
\item Let $\rho=\Rho(\lambda|p\mu)$. We define the $F[N_\rho/P_\rho]$-module
\begin{align*}
\mathbf{Q}(\lambda|p\mu):&=\boxtimes_{i=0}^\infty \G_{p^i}(Y^{\lambda(i)},Y^{\mu(i-1)}).%\G_1(Y^{\lambda(0)},Y^\varnothing)\boxtimes \G_p(Y^{\lambda(1)},Y^{\mu(0)})\boxtimes \cdots \G_{p^r}(Y^{\lambda(r)},Y^{\mu(r-1)})\\
\end{align*}
\end{enumerate}
\end{defn}

By \cite[5.1(3) and 5.2(2)]{Do} and \cite[Theorem~3.8(b) and Corollary 3.18]{DL}, we obtain the following result.

\begin{thm}\label{T: Young Broue cor} Let $(\lambda|p\mu)\in\P^2(n)$ and $\rho=\Rho(\lambda|p\mu)$. Then
\begin{enumerate}[(i)]
\item $Y(\lambda|p\mu)$ has Green vertex $P_\rho$, and
\item $Y(\lambda|p\mu)(P_\rho)\cong \mathbf{Q}(\lambda|p\mu)$ as $F[N_\rho/P_\rho]$-modules.
\end{enumerate}
\end{thm}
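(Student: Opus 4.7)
The plan is to compute $M(\lambda|p\mu)(P_\rho)$ explicitly using Proposition~\ref{P: Brauer M}, extract $\mathbf{Q}(\lambda|p\mu)$ as a distinguished projective indecomposable summand, and then invoke Brou\'e's Brauer correspondence (Theorem~\ref{BC1}) to identify it with the Brauer quotient of $Y(\lambda|p\mu)$.

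Setting $\rho := \Rho(\lambda|p\mu)$ and applying Proposition~\ref{P: Brauer M} yields
\[
M(\lambda|p\mu)(P_\rho) \cong \bigoplus_{(\b{\gamma}|\b{\delta}) \in \Lambda((\lambda|p\mu),\rho)} W(\b{\gamma}|\b{\delta}).
\]
The $p$-adic expansions of $\lambda$ and $\mu$ furnish a canonical tuple $\b{\gamma}^{(i)} := \lambda(i)$ and $\b{\delta}^{(i)} := \mu(i-1)$ which lies in $\Lambda((\lambda|p\mu),\rho)$ by the very definition of $\rho$. The associated summand $W_0 := \boxtimes_{i\geq 0} \G_{p^i}(M^{\lambda(i)}, M^{\mu(i-1)})$ contains $\mathbf{Q}(\lambda|p\mu)$ as a direct summand: indeed, each $\lambda(i)$ and $\mu(i-1)$ is $p$-restricted, so $Y^{\lambda(i)}$ and $Y^{\mu(i-1)}$ are projective indecomposable summands of $M^{\lambda(i)}$ and $M^{\mu(i-1)}$, respectively, and hence $\mathbf{Q}(\lambda|p\mu) \mid M(\lambda|p\mu)(P_\rho)$.

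I would then verify that $\mathbf{Q}(\lambda|p\mu)$ is projective and indecomposable as an $F[N_\rho/P_\rho]$-module. Using the identification $N_\rho/P_\rho \cong \prod_i (N_{p^i}/P_{p^i}) \wr \sym{n_i}$ together with the coprimality of $|N_{p^i}/P_{p^i}|$ to $p$, projectivity of each factor $\G_{p^i}(Y^{\lambda(i)}, Y^{\mu(i-1)})$ reduces to projectivity of the underlying $\sym{n_i}$-module, which holds since $Y^{\lambda(i)}$ and $Y^{\mu(i-1)}$ are projective Young modules. Indecomposability is preserved under outer tensor products, so $\mathbf{Q}(\lambda|p\mu)$ is projective indecomposable. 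Theorem~\ref{BC1} then yields a unique indecomposable summand $V$ of $M(\lambda|p\mu)$ with Green vertex $P_\rho$ and $V(P_\rho) \cong \mathbf{Q}(\lambda|p\mu)$, establishing (i) and (ii) simultaneously as soon as we identify $V$.

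It remains to show $V \cong Y(\lambda|p\mu)$. I would argue by induction on the dominance order on $\P^2_p(n)$, exploiting the upper unitriangular decomposition $M(\lambda|p\mu) \cong Y(\lambda|p\mu) \oplus \bigoplus_{(\sigma|p\tau) \rhd (\lambda|p\mu)} m_{\sigma,\tau} \cdot Y(\sigma|p\tau)$. By the inductive hypothesis combined with Theorem~\ref{BT2}, each $Y(\sigma|p\tau)$ with $(\sigma|p\tau) \rhd (\lambda|p\mu)$ has Green vertex $P_{\Rho(\sigma|p\tau)}$, so its Brauer quotient at $P_\rho$ either vanishes (when $P_\rho$ is not contained in any $\sym{n}$-conjugate of $P_{\Rho(\sigma|p\tau)}$) or yields a projective indecomposable distinguishable from $\mathbf{Q}(\lambda|p\mu)$ via the Young-module factors $Y^{\sigma(i)} \not\cong Y^{\lambda(i)}$ or $Y^{\tau(i-1)} \not\cong Y^{\mu(i-1)}$. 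The main obstacle is precisely this last identification: the injectivity of $(\lambda|p\mu) \mapsto \mathbf{Q}(\lambda|p\mu)$ across the fibres of $\Rho$, which is the essential content of the cited \cite[5.1(3), 5.2(2)]{Do} and \cite[Theorem~3.8(b), Corollary~3.18]{DL}.
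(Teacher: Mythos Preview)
The paper does not prove this theorem at all; it simply records that the statement follows from \cite[5.1(3), 5.2(2)]{Do} and \cite[Theorem~3.8(b), Corollary~3.18]{DL}. Your proposal is therefore a genuinely different route: you attempt a self-contained argument via Proposition~\ref{P: Brauer M}, Brou\'e's correspondence, and induction on dominance. The overall architecture is sound --- exhibiting $\mathbf{Q}(\lambda|p\mu)$ as a projective indecomposable summand of $M(\lambda|p\mu)(P_\rho)$, pulling back via Theorem~\ref{BC1} to an indecomposable summand $V$ of $M(\lambda|p\mu)$ with vertex $P_\rho$, and ruling out $V\cong Y(\sigma|p\tau)$ for $(\sigma|p\tau)\rhd(\lambda|p\mu)$ by induction --- and the vertex comparison in the inductive step (conjugacy of $P_{\Rho(\sigma|p\tau)}$ and $P_\rho$ forces equal orbit types, hence $\Rho(\sigma|p\tau)=\rho$) is correct.

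There is, however, one genuine gap and one point where your argument is not self-contained. The gap is in the sentence ``Indecomposability is preserved under outer tensor products, so $\mathbf{Q}(\lambda|p\mu)$ is projective indecomposable.'' This handles the outer tensor over $i$, but you have not verified that each individual factor $\G_{p^i}(Y^{\lambda(i)},Y^{\mu(i-1)})$ is indecomposable; it is an \emph{induced} module, and induction does not preserve indecomposability in general. The paper itself (in the alternative proof of Lemma~\ref{L: module V,S}) appeals to \cite{BKuelshammer} for exactly this point in the $\G_2$ case. Second, as you yourself flag, the injectivity of $(\lambda|p\mu)\mapsto\mathbf{Q}(\lambda|p\mu)$ on each fibre of $\Rho$ is precisely what the references \cite{Do,DL} supply, so your argument does not avoid them; what it buys is a clean reduction of the theorem to that injectivity statement together with indecomposability of the $\G_{p^i}$ factors.
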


\section{Klyachko's multiplicity formula for signed $p$-Kostka numbers}\label{S: Klyachko}

Let $p$ be an odd prime. We are now ready to exploit the Brauer correspondences for signed Young permutation and signed Young modules obtained in Proposition \ref{P: Brauer M} and Theorem \ref{T: Young Broue cor} to deduce the main result of this section (see Theorem \ref{T: Kostka reduction}) which is the signed version of the Klyachko's multiplicity formula. We then use the result to deduce Steinberg Tensor Product Theorem for the irreducible modules for the Schur superalgebras.

We begin with a lemma.% In the second subsection, we further explore the subset $\Lambda$.....

%\subsection{Signed Klyachko's multiplicity formula} We begin with a lemma.

\begin{lem}\label{L: module V,S} Let $\gamma,\delta$ be partitions and $\lambda,\mu$ be $p$-restricted partitions such that $n=|\lambda|+|\mu|=|\gamma|+|\delta|$. Then \[[\G_2(M^\gamma,M^\delta):\G_2(Y^\lambda,Y^\mu)]=\left \{\begin{array}{ll}k_{\gamma,\lambda}k_{\delta,\mu}&\text{if $|\gamma|=|\lambda|$ and $|\delta|=|\mu|$,}\\ 0 &\text{otherwise.}\end{array}\right .\]
\end{lem}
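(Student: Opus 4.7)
The plan is to exploit a Clifford-theoretic decomposition of $F[\sym{2}\wr\sym{n}]$-modules coming from the normal $p'$-subgroup $B:=\sym{2}^n$. Since $p$ is odd, $FB$ is semisimple with linear characters $\chi_\epsilon(g_1,\ldots,g_n)=\prod_i\sgn(g_i)^{\epsilon_i}$ indexed by $\epsilon\in\{0,1\}^n$. The top group $\sym{n}$ permutes these by permuting coordinates, with orbits indexed by $t:=\#\{i:\epsilon_i=1\}\in\{0,\ldots,n\}$. Fix the representative $\chi^{(t)}:=\chi_{(0^{n-t},1^t)}$; its stabilizer in $\sym{2}\wr\sym{n}$ is $H_t:=\sym{2}\wr(\sym{n-t}\times\sym{t})$, and $\chi^{(t)}$ extends to a linear $H_t$-character $\widetilde{\chi}^{(t)}$ by triviality on the complement $\sym{n-t}\times\sym{t}$. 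Accordingly, any $F[\sym{2}\wr\sym{n}]$-module $V$ splits as an $F[\sym{2}\wr\sym{n}]$-module into $V=\bigoplus_{t=0}^n V^{(t)}$, where $V^{(t)}$ collects the $B$-isotypic subspaces associated with the $\sym{n}$-orbit of $\chi^{(t)}$. Standard Clifford theory then provides, for each $t$, an equivalence of additive categories $\Phi_t$ from the full subcategory of modules with $V=V^{(t)}$ to $\modcat{F[\sym{n-t}\times\sym{t}]}$: send $V$ to the $\chi^{(t)}$-isotypic $H_t$-submodule $V_{\chi^{(t)}}$, untwist by $\widetilde{\chi}^{(t)}$ to remove the $B$-action, then deflate to $H_t/B\cong\sym{n-t}\times\sym{t}$.

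I would then identify the images of the two modules in the lemma. The inner module $\Inf M^\gamma\boxtimes(\Inf M^\delta\otimes\widehat{\sgn}^{\otimes|\delta|})$ that induces up to $\G_2(M^\gamma,M^\delta)$ has $B$ acting by $\chi^{(|\delta|)}$, so $\G_2(M^\gamma,M^\delta)$ lies entirely in the $t=|\delta|$ summand. A Mackey computation identifies the $\chi^{(|\delta|)}$-isotypic restriction of $\G_2(M^\gamma,M^\delta)$ to $H_{|\delta|}$ with this inner module itself, since the only double coset in $H_{|\delta|}\backslash(\sym{2}\wr\sym{n})/H_{|\delta|}$ whose representative fixes $\chi^{(|\delta|)}$ is the trivial one. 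The character $\widehat{\sgn}^{\otimes|\delta|}$ restricts to $\sgn_{\sym{|\delta|}}$ on the $\sym{|\delta|}$ factor of the complement, so untwisting by $\widetilde{\chi}^{(|\delta|)}$ and deflating produces
\[
\Phi_{|\delta|}\bigl(\G_2(M^\gamma,M^\delta)\bigr)\cong M^\gamma\boxtimes(M^\delta\otimes\sgn).
\]
An identical calculation gives $\Phi_{|\mu|}\bigl(\G_2(Y^\lambda,Y^\mu)\bigr)\cong Y^\lambda\boxtimes(Y^\mu\otimes\sgn)$.

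To conclude: if $|\delta|\neq|\mu|$ the two modules sit in different summands, so the Krull--Schmidt multiplicity is zero, which (because $|\gamma|+|\delta|=n=|\lambda|+|\mu|$) is precisely the ``otherwise'' case. If $|\delta|=|\mu|$, equivalently $|\gamma|=|\lambda|$, then $\Phi_{|\delta|}$ transports the multiplicity to
\[
\bigl[M^\gamma\boxtimes(M^\delta\otimes\sgn):Y^\lambda\boxtimes(Y^\mu\otimes\sgn)\bigr].
\]
Since $-\otimes\sgn$ is an invertible (involutive) endofunctor of $\modcat{F\sym{|\delta|}}$ under which $Y^\mu\otimes\sgn$ remains indecomposable, and since Krull--Schmidt multiplicities factor across outer tensor products of indecomposables over the Young subgroup, this equals $[M^\gamma:Y^\lambda]\cdot[M^\delta:Y^\mu]=k_{\gamma,\lambda}\,k_{\delta,\mu}$, as required. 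The main technical obstacle is the Mackey--Clifford identification of $\Phi_t$ on $\G_2(\cdot,\cdot)$: the characters must be tracked carefully through inflation, the $\widehat{\sgn}$-twist, induction, and deflation.
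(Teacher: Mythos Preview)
Your argument is correct and takes a genuinely different route from the paper's proof. The paper first decomposes $M^\gamma$ and $M^\delta$ into Young modules, uses the evident biadditivity of $\G_2$ to write $\G_2(M^\gamma,M^\delta)\cong\bigoplus_{\alpha,\beta}k_{\gamma,\alpha}k_{\delta,\beta}\,\G_2(Y^\alpha,Y^\beta)$, and then quotes results from \cite{GLOW} (alternatively \cite{BKuelshammer} and \cite{DL}) to know that each $\G_2(Y^\alpha,Y^\beta)$ is indecomposable and that $\G_2(Y^\lambda,Y^\mu)\cong\G_2(Y^\alpha,Y^\beta)$ forces $\lambda=\alpha$, $\mu=\beta$.

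Your Clifford-theoretic approach is more self-contained: it explains structurally why the answer vanishes when $|\delta|\neq|\mu|$ (the two modules lie in distinct block summands for the normal $p'$-subgroup $B$) and, via the Morita equivalence $\Phi_t$, it yields the indecomposability of $\G_2(Y^\lambda,Y^\mu)$ for free rather than by citation. In fact the reference \cite[A.1]{DL} that the paper invokes is exactly this Clifford decomposition, so you are unpacking what the paper outsources. One small simplification: you do not need the Mackey double-coset step at all, since by definition $\G_2(M^\gamma,M^\delta)$ is already the induction from $H_{|\delta|}$ of $\widetilde{\chi}^{(|\delta|)}\otimes\Inf\bigl(M^\gamma\boxtimes(M^\delta\otimes\sgn)\bigr)$, which is precisely the inverse of your functor $\Phi_{|\delta|}$ applied to $M^\gamma\boxtimes(M^\delta\otimes\sgn)$. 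The final step, that outer tensor products of Young modules (or their sign-twists) are indecomposable and pairwise non-isomorphic, is safe here because Young modules are absolutely indecomposable over any field.
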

\begin{proof} Let $M^\gamma\cong \bigoplus_{\alpha\unrhd\gamma} k_{\gamma,\alpha}Y^\alpha$ and $M^\delta\cong \bigoplus_{\beta\unrhd\delta} k_{\delta,\beta}Y^\beta$. Then, by Definition \ref{D: bifunctor}, \[\G_2(M^\gamma,M^\delta)\cong \bigoplus_{\substack{\alpha\unrhd\gamma,\\ \beta\unrhd\delta}} k_{\gamma,\alpha}k_{\delta,\beta}\cdot \G_2(Y^\alpha,Y^\beta).\] Since $\lambda,\mu$ are $p$-restricted partitions, by \cite[Lemma 4.4 and Proposition 4.5]{GLOW}, we see that $\G_2(Y^\lambda,Y^\mu)\cong \G_2(Y^\alpha,Y^\beta)$ if and only if $\lambda=\alpha$ and $\mu=\beta$. Therefore, we obtain our desired equality. Alternatively, using \cite[Propositions 1.2 and 5.1]{BKuelshammer}, we see that each $F[C_2\wr\sym{n}]$-module $\G_2(Y^\alpha,Y^\beta)$ is indecomposable. The complexity of $\G_2(Y^\alpha,Y^\beta)$ is the sum of the complexities of $Y^\alpha$ and $Y^\beta$ and therefore is a projective module (i.e., has complexity zero) if and only if both $\alpha$ and $\beta$ are $p$-restricted. Now use \cite[A.1]{DL}.
\end{proof}

In view of Lemma \ref{L: module V,S}, we consider the following subset of $\Lambda((\alpha|\beta),\Rho(\lambda|p\mu))$ (recall Definitions \ref{D: module W} and \ref{D: rho}). %\textcolor{blue}{We remind the reader that the set $\Lambda((\alpha|\beta),\Rho(\lambda|p\mu))$ was first introduced in Definition \ref{D: module W}.}

\begin{defn}\label{D: suppL} Suppose that $(\alpha|\beta)\in\C^2(n)$ and $(\lambda|p\mu)\in\P^2_p(n)$. Let $\suppL{(\alpha|\beta),(\lambda|p\mu)}$ be the subset of $\Lambda((\alpha|\beta),\Rho(\lambda|p\mu))$ consisting of $(\b{\gamma}|\b{\delta})$ such that, for all positive integers $i$,
\begin{enumerate}[(a)]
\item $|\b{\gamma}^{(i)}|=|\lambda(i)|$ and $\lambda(i)\unrhd \wp(\b{\gamma}^{(i)})$,
\item $|\b{\delta}^{(i)}|=|\mu(i-1)|$ and $\mu(i-1)\unrhd \wp(\b{\delta}^{(i)})$, and
\item $(\lambda(0)|\varnothing)\unrhd (\wp(\b{\gamma}^{(0)})|\wp(\b{\delta}^{(0)}))$.
\end{enumerate}
\end{defn}

The main result in this section is the following Klyachko's multiplicity formula for signed $p$-Kostka numbers. The formula basically says that the computation of signed $p$-Kostka numbers can be reduced to the computation of $p$-Kostka numbers and signed $p$-Kostka numbers of the form $k_{(\alpha|\beta),(\lambda|\varnothing)}$ where $\lambda$ is $p$-restricted, i.e., the multiplicity of each projective indecomposable module as a summand of $M(\alpha|\beta)$.
We will explicitly perform the computation of a specific signed $p$-Kostka number in Example \ref{ex:3.4} below. This will show how to concretely apply Theorem \ref{T: Kostka reduction}.

\begin{thm}\label{T: Kostka reduction} Let $(\alpha|\beta),(\lambda|p\mu)\in\P^2(n)$. We have \[k_{(\alpha|\beta),(\lambda|p\mu)}=\sum_{(\b{\gamma}|\b{\delta})\in \suppL{(\alpha|\beta),(\lambda|p\mu)}}k_{(\b{\gamma}^{(0)}|\b{\delta}^{(0)}),(\lambda(0)|\varnothing)} \prod_{i=1}^\infty k_{\b{\gamma}^{(i)},\lambda(i)}k_{\b{\delta}^{(i)},\mu(i-1)}.\]
\end{thm}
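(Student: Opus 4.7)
\medskip

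\noindent\textbf{Proof plan.} Set $\rho=\Rho(\lambda|p\mu)$. Since both $M(\alpha|\beta)$ and $Y(\lambda|p\mu)$ are $p$-permutation $F\sym{n}$-modules, and $Y(\lambda|p\mu)$ is indecomposable with Green vertex $P_\rho$ and Brauer correspondent $\mathbf{Q}(\lambda|p\mu)$ (Theorem~\ref{T: Young Broue cor}), the Brauer correspondence of Theorem~\ref{BC1}(ii) gives
\[
k_{(\alpha|\beta),(\lambda|p\mu)}=\bigl[M(\alpha|\beta)(P_\rho):\mathbf{Q}(\lambda|p\mu)\bigr].
\]
Expanding the left-hand Brauer quotient by Proposition~\ref{P: Brauer M} and using additivity of Krull--Schmidt multiplicities yields
\[
k_{(\alpha|\beta),(\lambda|p\mu)}=\sum_{(\b{\gamma}|\b{\delta})\in\Lambda((\alpha|\beta),\rho)}\bigl[W(\b{\gamma}|\b{\delta}):\mathbf{Q}(\lambda|p\mu)\bigr].
\]

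Both $W(\b{\gamma}|\b{\delta})$ and $\mathbf{Q}(\lambda|p\mu)$ are outer tensor products indexed by $i\in\NN_0$ of modules for the direct factors of $N_\rho/P_\rho$. Each tensor factor $\G_{p^i}(Y^{\lambda(i)},Y^{\mu(i-1)})$ is indecomposable, since as the Brauer quotient of the indecomposable signed Young module at vertex $P_\rho$, $\mathbf{Q}(\lambda|p\mu)$ is itself a projective indecomposable $F[N_\rho/P_\rho]$-module by Theorem~\ref{BC1}(i). By Krull--Schmidt applied to the direct product decomposition of the base group,
\[
\bigl[W(\b{\gamma}|\b{\delta}):\mathbf{Q}(\lambda|p\mu)\bigr]=\prod_{i=0}^{\infty}\bigl[\G_{p^i}(M^{\b{\gamma}^{(i)}},M^{\b{\delta}^{(i)}}):\G_{p^i}(Y^{\lambda(i)},Y^{\mu(i-1)})\bigr].
\]

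The $i=0$ factor is handled directly. Since $N_1/P_1$ is trivial, Definition~\ref{D: bifunctor} unwinds to $\G_1(U,V)\cong\ind_{\sym{|U|}\times\sym{|V|}}^{\sym{|U|+|V|}}(U\boxtimes(V\otimes\sgn))$, using that $\widehat{X}^{\otimes k}$ becomes the sign representation when $X$ is trivial. With $U=M^{\b{\gamma}^{(0)}}$ and $V=M^{\b{\delta}^{(0)}}$, together with the identity $M^{\b{\delta}^{(0)}}\otimes\sgn\cong M(\varnothing|\b{\delta}^{(0)})$ and transitivity of induction, this produces $M(\b{\gamma}^{(0)}|\b{\delta}^{(0)})$; similarly $\G_1(Y^{\lambda(0)},Y^{\varnothing})\cong Y^{\lambda(0)}=Y(\lambda(0)|\varnothing)$ (recall $\mu(-1)=\varnothing$). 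So the $i=0$ factor equals $k_{(\b{\gamma}^{(0)}|\b{\delta}^{(0)}),(\lambda(0)|\varnothing)}$. For $i\geq 1$, the normal subgroup $\N_{\alt{p^i}}(P_{p^i})/P_{p^i}$ of $N_{p^i}/P_{p^i}$ acts trivially on both $\G_{p^i}(M^{\b{\gamma}^{(i)}},M^{\b{\delta}^{(i)}})$ and $\G_{p^i}(Y^{\lambda(i)},Y^{\mu(i-1)})$, since the entire sign action on $\widehat{X}^{\otimes\cdot}$ factors through $N_{p^i}/\N_{\alt{p^i}}(P_{p^i})\cong C_2$. Lemma~\ref{L:trivialnormal} (applicable because $\G_{p^i}(Y^{\lambda(i)},Y^{\mu(i-1)})$ is indecomposable as noted above) combined with the deflation isomorphism (\ref{Eq: 1}) yields
\[
\bigl[\G_{p^i}(M^{\b{\gamma}^{(i)}},M^{\b{\delta}^{(i)}}):\G_{p^i}(Y^{\lambda(i)},Y^{\mu(i-1)})\bigr]=\bigl[\G_2(M^{\b{\gamma}^{(i)}},M^{\b{\delta}^{(i)}}):\G_2(Y^{\lambda(i)},Y^{\mu(i-1)})\bigr],
\]
and Lemma~\ref{L: module V,S} evaluates the right-hand side to $k_{\b{\gamma}^{(i)},\lambda(i)}\,k_{\b{\delta}^{(i)},\mu(i-1)}$ when $|\b{\gamma}^{(i)}|=|\lambda(i)|$ and $|\b{\delta}^{(i)}|=|\mu(i-1)|$, and to zero otherwise.

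Finally, collecting the nonvanishing constraints reproduces exactly the support set $\suppL{(\alpha|\beta),(\lambda|p\mu)}$: the size conditions of Definition~\ref{D: suppL}(a)--(b) come from the size constraints in Step $i\geq1$, while the dominance conditions follow from $k_{\gamma,\lambda}\neq 0\Rightarrow \lambda\unrhd\wp(\gamma)$ at each level (and the classical dominance constraint on nonzero signed $p$-Kostka numbers at level $0$). Summing the surviving products then produces the asserted identity. The main technical obstacle lies in the $i\geq 1$ step: one must verify that $\N_{\alt{p^i}}(P_{p^i})/P_{p^i}$ indeed acts trivially on both $\G_{p^i}$-modules (so that Lemma~\ref{L:trivialnormal} applies) and that (\ref{Eq: 1}) is compatible with the Young/signed Young inputs; once this reduction to $\G_2$ is secured, everything else is bookkeeping.
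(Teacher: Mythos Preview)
Your proposal is correct and follows essentially the same route as the paper: pass to the Brauer quotient at $P_\rho$ via Theorem~\ref{BC1}(ii), decompose via Proposition~\ref{P: Brauer M} and Theorem~\ref{T: Young Broue cor}, split the outer tensor product into its $i$-indexed factors, treat $i=0$ by unwinding $\G_1$ directly, and for $i\geq 1$ deflate to $\G_2$ using Equation~(\ref{Eq: 1}) together with Lemma~\ref{L:trivialnormal} before applying Lemma~\ref{L: module V,S}. Your explicit justification that each factor $\G_{p^i}(Y^{\lambda(i)},Y^{\mu(i-1)})$ is indecomposable (deduced from the projective indecomposability of $\mathbf{Q}(\lambda|p\mu)$ via Theorem~\ref{BC1}(i)) is a nice touch that the paper leaves implicit; it is exactly what is needed both to factor the multiplicity over the direct product and to invoke Lemma~\ref{L:trivialnormal}.
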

\begin{proof} Let $\rho=\Rho(\lambda|p\mu)$. By Theorem \ref{BC1}(ii), since $P_\rho$ is a Green vertex of $Y(\lambda|p\mu)$ by Theorem~\ref{T: Young Broue cor}(i), we have $[M(\alpha|\beta):Y(\lambda|p\mu)]=[M(\alpha|\beta)(P_\rho):Y(\lambda|p\mu)(P_\rho)]$ and therefore by Proposition~\ref{P: Brauer M} and Theorem~\ref{T: Young Broue cor}(ii), we have
\begin{align}\label{Eq: 2} [M(\alpha|\beta):Y(\lambda|p\mu)]=\sum_{(\b{\gamma}|\b{\delta})\in \Lambda((\alpha|\beta),\rho)}\prod_{i=0}^\infty \big(\G_{p^i}(M^{\b{\gamma}^{(i)}},M^{\b{\delta}^{(i)}}):\G_{p^i}(Y^{\lambda(i)},Y^{\mu(i-1)})\big).
\end{align} Fix $(\b{\gamma}|\b{\delta})\in \Lambda((\alpha|\beta),\rho)$. When $i=0$, for \[\big[\G_1(M^{\b{\gamma}^{(0)}},M^{\b{\delta}^{(0)}}):\G_1(Y^{\lambda(0)},Y^\varnothing)\big]=[M(\b{\gamma}^{(0)}|\b{\delta}^{(0)}):Y^{\lambda(0)}]= k_{(\b{\gamma}^{(0)}|\b{\delta}^{(0)}),(\lambda(0)|\varnothing)} \neq 0,\] it is necessary that $(\lambda(0)|\varnothing)\unrhd  (\wp(\b{\gamma}^{(0)})|\wp(\b{\delta}^{(0)}))$. When $i\geq 1$, by Equation \ref{Eq: 1}, Lemmas~\ref{L:trivialnormal} and \ref{L: module V,S}, we have
\begin{align*}
&\big[\G_{p^i}(M^{\b{\gamma}^{(i)}},M^{\b{\delta}^{(i)}}):\G_{p^i}(Y^{\lambda(i)},Y^{\mu(i-1)})\big]\\
=& \big[\G_2(M^{\b{\gamma}^{(i)}},M^{\b{\delta}^{(i)}}):\G_2(Y^{\lambda(i)},Y^{\mu(i-1)})\big]\\
=&\left \{ \begin{array}{ll} k_{\b{\gamma}^{(i)},\lambda(i)}k_{\b{\delta}^{(i)},\mu(i-1)}&\text{if $|\b{\gamma}^{(i)}|=|\lambda(i)|$ and $|\b{\delta}^{(i)}|=|\mu(i-1)|$,}\\ 0&\text{otherwise.}\end{array}\right .
\end{align*} Furthermore, $k_{\b{\gamma}^{(i)},\lambda(i)}k_{\b{\delta}^{(i)},\mu(i-1)}=0$ unless $\lambda(i)\unrhd \wp(\b{\gamma}^{(i)})$ and  $\mu(i-1)\unrhd \wp(\b{\delta}^{(i)})$. This shows that we only need to consider the subset $\suppL{(\alpha|\beta),(\lambda|p\mu)}$ in Equation \ref{Eq: 2}. The proof is now complete.
\end{proof}

\begin{eg}\label{ex:3.4} Let $p=3$, $(\alpha|\beta)=((1^3)|(6,3,3))$ and $(\lambda|p\mu)=((2^2,1^2)|(6,3))$. In this case, $\lambda(0)=(2^2,1^2)$, $\mu(0)=(2,1)$ and $\lambda(i)=\varnothing=\mu(i)$ for all $i\geq 1$. We leave it to the reader to check that $\suppL{(\alpha|\beta),(\lambda|p\mu)}$ consists of 3 elements $(\b{\gamma}|\b{\delta})$ such that $\b{\gamma}^{(i)}=\varnothing=\b{\delta}^{(i+1)}$ for all $i\geq 1$, $\b{\gamma}^{(0)}=(1^3)$ and the pair $(\b{\delta}^{(0)},\b{\delta}^{(1)})$ belongs to the set $\Delta:=\{((3,0,0),(1^3)),((0,3,0),(2,0,1)),((0,0,3),(2,1,0))\}$. Therefore, using Theorem \ref{T: Kostka reduction} and the matrix $\sgnK_6$ as in Example \ref{Eg: signKostka6}, we have
\begin{align*}
  &\ k_{((1^3)|(6,3,3)),((2^2,1^2)|(6,3))}\\
  =&\ \sum_{(\b{\delta}^{(0)},\b{\delta}^{(1)})\in\Delta} k_{(\b{\gamma}^{(0)}|\b{\delta}^{(0)}),(\lambda(0)|\varnothing)}k_{\b{\delta}^{(1)},\mu(0)}\\
  =&\ k_{((1^3)|(3,0,0)),((2^2,1^2)|\varnothing)}k_{(1^3),(2,1)}+k_{((1^3)|(0,3,0)),((2^2,1^2)|\varnothing)}k_{(2,0,1),(2,1)} +k_{((1^3)|(0,0,3)),((2^2,1^2)|\varnothing)}k_{(2,1,0),(2,1)}\\
  =&\ k_{((1^3)|(3)),((2^2,1^2)|\varnothing)}(k_{(1^3),(2,1)}+2\cdot k_{(2,1),(2,1)})\\
  =&\ 3\times 3=9.
\end{align*}
\end{eg}

We end this subsection with the following remark about Klyachko's multiplicity formula \cite[Corollary 9.2]{Klyachko} and Steinberg Tensor Product Theorem.

\begin{cor}[Klyachko's multiplicity formula] Let $\alpha,\lambda\in\P(n)$. We have \[k_{\alpha,\lambda}=\sum \prod_{i=0}^\infty k_{\b{\gamma}^{(i)},\lambda(i)}\] where the sum is taken over all $\b{\gamma}^{(0)},\b{\gamma}^{(1)},\ldots\in \Seq$ such that $\alpha=\sum_{i=0}^\infty p^i\cdot\b{\gamma}^{(i)}$ and $\lambda(i)\unrhd \wp(\b{\gamma}^{(i)})$ for all $i\in\NN_0$.
\end{cor}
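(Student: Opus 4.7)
The plan is to obtain this as a direct specialization of Theorem~\ref{T: Kostka reduction}, taking $\beta=\varnothing$ and $\mu=\varnothing$, so that $k_{\alpha,\lambda}=k_{(\alpha|\varnothing),(\lambda|\varnothing)}$ by the identification of $\P(n)$ as a subset of $\P^2(n)$.

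First I would unpack the sets $\Lambda((\alpha|\varnothing),\Rho(\lambda|\varnothing))$ and $\suppL{(\alpha|\varnothing),(\lambda|\varnothing)}$ under this specialization. Since $\beta=\varnothing$, the decomposition condition $\beta=\sum_{i=0}^\infty p^i\cdot\b{\delta}^{(i)}$ forces $\b{\delta}^{(i)}=\varnothing$ for every $i\in\NN_0$. Similarly, the $p$-adic expansion of $\mu=\varnothing$ gives $\mu(i-1)=\varnothing$ for all $i\geq 1$. Conditions (b) and (c) of Definition~\ref{D: suppL} thus become trivial in the $\b{\delta}$-coordinates, while condition (c) reduces to $\lambda(0)\unrhd\wp(\b{\gamma}^{(0)})$.

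Next I would simplify each factor in the product formula. For $i\geq 1$, $k_{\b{\delta}^{(i)},\mu(i-1)}=k_{\varnothing,\varnothing}=1$ by the convention in Definition~\ref{defn Kostka}. For the $i=0$ term, since $\b{\delta}^{(0)}=\varnothing$ and by inspection of Definition~\ref{defn Kostka}, we have
\[
k_{(\b{\gamma}^{(0)}|\varnothing),(\lambda(0)|\varnothing)}=[M^{\b{\gamma}^{(0)}}:Y^{\lambda(0)}]=k_{\b{\gamma}^{(0)},\lambda(0)},
\]
which is exactly the $i=0$ factor of the product in the claimed formula. Theorem~\ref{T: Kostka reduction} therefore reduces to
\[
k_{\alpha,\lambda}=\sum_{\b{\gamma}}\prod_{i=0}^\infty k_{\b{\gamma}^{(i)},\lambda(i)},
\]
where the sum ranges over tuples $\b{\gamma}=(\b{\gamma}^{(i)})_{i\geq 0}$ with $\b{\gamma}^{(i)}\in\Seq$, subject to the decomposition $\alpha=\sum_{i=0}^\infty p^i\cdot\b{\gamma}^{(i)}$ and the dominance constraints $\lambda(i)\unrhd\wp(\b{\gamma}^{(i)})$ for $i\geq 1$ together with $\lambda(0)\unrhd\wp(\b{\gamma}^{(0)})$ coming from (c).

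Finally I would verify the size-matching conditions so that the index set coincides with the one in the statement. For $i\geq 1$, condition (a) of Definition~\ref{D: suppL} enforces $|\b{\gamma}^{(i)}|=|\lambda(i)|$. Combining $|\alpha|=|\lambda|=\sum_{i\geq 0}p^i|\lambda(i)|$ with the weight constraint $|\alpha|=\sum_{i\geq 0}p^i|\b{\gamma}^{(i)}|$ then forces $|\b{\gamma}^{(0)}|=|\lambda(0)|$ automatically, so the dominance conditions on $\b{\gamma}^{(i)}$ hold uniformly for all $i\in\NN_0$. This matches exactly the index set in the corollary. There is no real obstacle in this argument; the entire content of Klyachko's formula is already encoded in Theorem~\ref{T: Kostka reduction}, and all that remains is to trace the definitions carefully in the unsigned case.
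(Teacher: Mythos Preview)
Your proposal is correct and follows exactly the same approach as the paper: specialize Theorem~\ref{T: Kostka reduction} to $\beta=\varnothing=\mu$, so that each $\b{\delta}^{(i)}=\varnothing$, condition~(b) of Definition~\ref{D: suppL} becomes vacuous, and condition~(c) collapses to $\lambda(0)\unrhd\wp(\b{\gamma}^{(0)})$. The paper compresses this into a single sentence, whereas you spell out the bookkeeping (including the size-matching for $i=0$, which in fact already follows directly from condition~(c) of Definition~\ref{D: module W} since $n_0=|\lambda(0)|$ and $\b{\delta}^{(0)}=\varnothing$), but the substance is identical.
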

\begin{proof} Take $\beta=\varnothing=\mu$ in Theorem~\ref{T: Kostka reduction}, the condition Definition~\ref{D: suppL}(b) is trivially satisfied and we have only $\lambda(i)\unrhd \wp(\b{\gamma}^{(i)})$ for all $i\in\NN_0$.
\end{proof}

Recall our notation as in Remark \ref{Rem: Schur super}(i).

\begin{cor}[Steinberg Tensor Product Theorem]\label{C: steinberg} Let $(\lambda|p\mu)\in\P^2_p(n)$ and $r$ be the maximal such that either $\lambda(r)\neq \varnothing$ or $\mu(r-1)\neq \varnothing$. Then \[L(\lambda|p\mu)\cong L(\lambda(0)|\varnothing)\otimes L(p\lambda(1)|p\mu(0))\otimes \cdots\otimes L(p^r\lambda(r)|p^r\mu(r-1)).\]
\end{cor}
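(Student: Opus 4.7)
The plan is to prove the isomorphism by showing that both sides have identical characters and then invoking standard highest-weight considerations. By Remark~\ref{Rem: Schur super}(i), $\dim L(\lambda|p\mu)^{(\alpha|\beta)}=k_{(\alpha|\beta),(\lambda|p\mu)}$ for every $(\alpha|\beta)\in\C^2(n)$, so it suffices to establish the combinatorial identity
\[k_{(\alpha|\beta),(\lambda|p\mu)}=\sum\prod_{i=0}^{r}k_{(\alpha_i|\beta_i),(p^i\lambda(i)|p^i\mu(i-1))},\]
where the sum ranges over decompositions $\alpha=\alpha_0\cup\cdots\cup\alpha_r$, $\beta=\beta_0\cup\cdots\cup\beta_r$ with $|\alpha_i|+|\beta_i|=p^i(|\lambda(i)|+|\mu(i-1)|)$; the right hand side is $\dim V^{(\alpha|\beta)}$ for the tensor product $V$ appearing in the corollary, since weights are additive on tensor products.

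To evaluate each factor on the right I would apply Theorem~\ref{T: Kostka reduction} to the pair $(p^i\lambda(i)|p^i\mu(i-1))$. For $i\geq 1$, write this as $(\lambda'|p\mu')$ with $\lambda'=p^i\lambda(i)$ and $\mu'=p^{i-1}\mu(i-1)$; since $\lambda(i)$ and $\mu(i-1)$ are $p$-restricted, the $p$-adic expansions degenerate with $\lambda'(j)=\lambda(i)\cdot\delta_{j,i}$ and $\mu'(j)=\mu(i-1)\cdot\delta_{j,i-1}$. Conditions (a)--(c) of Definition~\ref{D: suppL} then force every entry of $(\b{\gamma}|\b{\delta})\in\suppL{(\alpha_i|\beta_i),(p^i\lambda(i)|p^i\mu(i-1))}$ to vanish except $\b{\gamma}^{(i)}$ and $\b{\delta}^{(i)}$, so the sum of Theorem~\ref{T: Kostka reduction} collapses to the single term
\[k_{(\alpha_i|\beta_i),(p^i\lambda(i)|p^i\mu(i-1))}=k_{\b{\gamma}^{(i)},\lambda(i)}\,k_{\b{\delta}^{(i)},\mu(i-1)},\]
nonzero only when $\alpha_i=p^i\cdot\b{\gamma}^{(i)}$ and $\beta_i=p^i\cdot\b{\delta}^{(i)}$ for some sequences. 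The $i=0$ factor simply contributes $k_{(\alpha_0|\beta_0),(\lambda(0)|\varnothing)}$.

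Substituting these evaluations back into $\dim V^{(\alpha|\beta)}$ and repackaging the data as $(\b{\gamma}|\b{\delta})=((\b{\gamma}^{(i)})_{i\geq 0}|(\b{\delta}^{(i)})_{i\geq 0})$ reproduces exactly the formula of Theorem~\ref{T: Kostka reduction} applied to $((\alpha|\beta),(\lambda|p\mu))$. Hence $V$ and $L(\lambda|p\mu)$ share the same character. Each factor of $V$ has a one-dimensional highest weight space strictly dominating all its other weights, so $V$ has a one-dimensional weight space at $(\lambda|p\mu)$ which is maximal in dominance among the weights of $V$; consequently $L(\lambda|p\mu)$ is a composition factor of $V$, and the character equality combined with the linear independence of irreducible characters leaves no room for further composition factors, giving $V\cong L(\lambda|p\mu)$.

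I expect the main obstacle to be the second step: one must carefully verify, using (a)--(c) of Definition~\ref{D: suppL} together with the $p$-restrictedness of $\lambda(i)$ and $\mu(i-1)$, that the formula of Theorem~\ref{T: Kostka reduction} for the single-block pair $(p^i\lambda(i)|p^i\mu(i-1))$ truly collapses onto the ``diagonal'' contribution with $\b{\gamma}^{(j)}=\b{\delta}^{(j)}=\varnothing$ for $j\neq i$. Once this dilation phenomenon is secured, the reassembly into the full formula of Theorem~\ref{T: Kostka reduction} is purely book-keeping, and the final passage from a character identity to an isomorphism is a standard highest-weight argument.
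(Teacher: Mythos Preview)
Your overall architecture matches the paper's exactly: compute the weight-space dimensions of the tensor product $V$, compare with $k_{(\alpha|\beta),(\lambda|p\mu)}$ via Theorem~\ref{T: Kostka reduction}, and conclude by a highest-weight argument. The one substantive difference is how the weight spaces of each individual factor $L(p^i\lambda(i)\mid p^i\mu(i-1))$ are obtained. The paper invokes the Frobenius morphism $\Frob:\mathrm{GL}(m|n)\to\mathrm{GL}(m)\times\mathrm{GL}(n)$ and \cite[Remark 4.6(iii)]{BrundanKujawa}, giving $L(p\sigma|p\tau)\cong\Frob^*(L(\sigma)\boxtimes L(\tau))$, which immediately forces all weights to be of the form $(p^i\gamma|p^i\delta)$ with multiplicity $k_{\gamma,\lambda(i)}k_{\delta,\mu(i-1)}$. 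You instead re-apply Theorem~\ref{T: Kostka reduction} to each factor and observe that the $p$-adic expansion of $(p^i\lambda(i)|p^i\mu(i-1))$ is concentrated at level $i$, so that $\Rho(p^i\lambda(i)|p^i\mu(i-1))$ has $n_j=0$ for $j\neq i$ and the sum collapses. This is a valid, more self-contained route: it stays entirely within the paper's combinatorics and avoids the external Frobenius input, at the cost of having to verify the collapse by hand. The paper's route is more structural and explains \emph{why} the collapse happens.

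One genuine correction is needed. Where you write ``decompositions $\alpha=\alpha_0\cup\cdots\cup\alpha_r$'', the symbol $\cup$ in this paper denotes partition union $\wp(\alpha\cont\beta)$, which is \emph{not} what weight additivity on tensor products gives. The $(\alpha|\beta)$-weight space of $V_0\otimes\cdots\otimes V_r$ decomposes over \emph{componentwise} sums $\alpha=\alpha_0+\cdots+\alpha_r$ of sequences, exactly as in Definition~\ref{D: module W}(i)(b) where $\alpha=\sum_i p^i\cdot\b{\gamma}^{(i)}$ is a componentwise identity in $\Seq$. With $\cup$ your reassembly would not reproduce $\Lambda((\alpha|\beta),\rho)$; with $+$ it does. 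Once this is fixed, your reassembly and the final highest-weight step are correct as written.
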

\begin{proof} Let $N=L(\lambda(0)|\varnothing)\otimes L(p\lambda(1)|p\mu(0))\otimes \cdots\otimes L(p^r\lambda(r)|p^r\mu(r-1))$ and $\rho=\Rho(\lambda|p\mu)$. Let $\mathrm{GL}(m|n)(F)$ denote the super general linear group as defined in \cite[Section 2]{BrundanKujawa}. The Frobenius map induces a map $\Frob:\mathrm{GL}(m|n)(F)\to \mathrm{GL}(m)(F)\times \mathrm{GL}(n)(F)$. By \cite[Remark 4.6(iii)]{BrundanKujawa}, $\Frob^* (L(\sigma)\boxtimes L(\tau))\cong L(p\sigma|p\tau)$ is the irreducible $S_F(m|n,s)$-module obtained by inflating through $\Frob$ where $L(\sigma),L(\tau)$ are the irreducible $S_F(m,s)$- and $S_F(n,s)$-modules of highest weights $\sigma,\tau$ respectively. Since $L(\sigma)\boxtimes L(\tau)$ has weights of the form $(\gamma|\delta)$, $\Frob^*(L(\sigma)\boxtimes L(\tau))$ has weights of the form $(p\gamma|p\delta)$. So, for each $i\in\{1,\ldots,r\}$, $L(p^i\lambda(i)|p^i\mu(i-1))$ has weights of the form $(p^i\b{\gamma}^{(i)}|p^i\b{\delta}^{(i)})$ of which the highest weight is $(p^i\lambda(i)|p^i\mu(i-1))$. Furthermore, \begin{align*}
\dim_FL(p^i\lambda(i)|p^i\mu(i-1))^{(p^i\b{\gamma}^{(i)}|p^i\b{\delta}^{(i)})}&=\dim_FL(p^i\lambda(i))^{p^i\b{\gamma}^{(i)}}\dim_FL(p^i\mu(i-1))^{p^i\b{\delta}^{(i)}}\\
&= k_{\b{\gamma}^{(i)},\lambda(i)}k_{\b{\delta}^{(i)},\mu(i-1)}.
\end{align*} When $i=0$, $\dim_FL(\lambda(0)|\varnothing)^{(\b{\gamma}^{(0)}|\b{\delta}^{(0)})}=k_{(\b{\gamma}^{(0)}|\b{\delta}^{(0)}),(\lambda(0)|\varnothing)}$. Therefore, for each $(\alpha|\beta)\in\C^2(n)$, by Theorem \ref{T: Kostka reduction}, we have
\begin{align*}
\dim_F N^{\alpha|\beta}&=\sum_{(\b{\gamma}|\b{\delta})\in \Lambda((\alpha|\beta),\rho)}\prod^r_{i=0}\dim_FL(p^i\lambda(i)|p^i\mu(i-1))^{(p^i\b{\gamma}^{(i)}|p^i\b{\delta}^{(i)})}\\
&=\sum_{(\b{\gamma}|\b{\delta})\in \Lambda((\alpha|\beta),\rho)} k_{(\b{\gamma}^{(0)}|\b{\delta}^{(0)}),(\lambda(0)|\varnothing)}k_{\b{\gamma}^{(1)},\lambda(1)}k_{\b{\delta}^{(1)},\mu(0)}\cdots k_{\b{\gamma}^{(r)},\lambda(r)}k_{\b{\delta}^{(r)},\mu(r-1)}\\
&=\sum_{(\b{\gamma}|\b{\delta})\in \suppL{(\alpha|\beta),(\lambda|p\mu)}} k_{(\b{\gamma}^{(0)}|\b{\delta}^{(0)}),(\lambda(0)|\varnothing)}k_{\b{\gamma}^{(1)},\lambda(1)}k_{\b{\delta}^{(1)},\mu(0)}\cdots k_{\b{\gamma}^{(r)},\lambda(r)}k_{\b{\delta}^{(r)},\mu(r-1)}\\
&=k_{(\alpha|\beta),(\lambda|p\mu)}=\dim_F L(\lambda|p\mu)^{\alpha|\beta}
\end{align*} Since both $N$ and $L(\lambda|p\mu)$ have highest weight $(\lambda|p\mu)$ and the dimensions of their $(\alpha|\beta)$-weight spaces are identical, we conclude that $N\cong L(\lambda|p\mu)$.
\end{proof}

\section{The signed $p$-Kostka numbers $k_{(\alpha|\beta),(\lambda|p\mu)}$ when $|\beta|=p|\mu|$}\label{S: equal case}
In this section, we examine the signed $p$-Kostka numbers $k_{(\alpha|\beta),(\lambda|p\mu)}$ when $|\beta|=p|\mu|$.
The main result is Theorem \ref{T: signedRowRemoval} which relates such signed $p$-Kostka numbers with some $p$-Kostka numbers of some symmetric group algebras of smaller sizes.
We use it to deduce Corollaries \ref{C: signedrowremoval} and \ref{C: principal block}.
In these corollaries we show that the signed $p$-Kostka matrix $\sgnK_n$ has diagonal blocks given by the Kronecker products of some $p$-Kostka matrices and we examine the principal-block summands of the signed Young permutation $F\sym{mp}$-modules where $m\in\NN_0$.

The proof of Theorem \ref{T: signedRowRemoval} built on our Theorem \ref{T: Kostka reduction}. In view of that, we shall look into a few properties of the subset $\suppL{(\alpha|\beta),(\lambda|p\mu)}$, as defined in Definition \ref{D: suppL}, which we shall need later in this paper.% \textcolor{blue}{For the convenience of the reader we recall that the set $\suppL{(\alpha|\beta),(\lambda|p\mu)}$ was first introduced in Definition \ref{D: suppL}.}
We begin with an easy observation.

\begin{lem}\label{L:first part null} Let $(\alpha|\beta)\in\C^2(n)$, $(\lambda|p\mu)\in\P^2_p(n)$ and $\Lambda^\circ=\suppL{(\alpha|\beta),(\lambda|p\mu)}$.
\begin{enumerate}[(i)]
  \item If $\lambda(0)=\varnothing$ then $\b{\delta}^{(0)}=\varnothing=\b{\gamma}^{(0)}$ for all $(\b{\gamma}|\b{\delta})\in\Lambda^\circ$. The converse holds true if $\Lambda^\circ\neq \emptyset$.
  \item If $|\beta|=p|\mu|$ then $\b{\delta}^{(0)}=\varnothing$ for all $(\b{\gamma}|\b{\delta})\in\Lambda^\circ$ and the map \[\phi:\suppL{(\alpha|\beta),(\lambda|p\mu)}\to \suppL{(\alpha|\varnothing),(\lambda|\varnothing)}\times \suppL{(\beta|\varnothing),(p\mu|\varnothing)}\] given by $\phi((\b{\gamma}|\b{\delta}))=((\b{\gamma}|\pmb{\varnothing}),(\b{\delta}|\pmb{\varnothing}))$ is bijective. Furthermore, if $\b{\delta}^{(0)}=\varnothing$ for all $(\b{\gamma}|\b{\delta})\in\Lambda^\circ$ and $\Lambda^\circ\neq \emptyset$ then $|\beta|=p|\mu|$.
 % \item [(iii)] If $\lambda(0)=\varnothing$ then $\suppL{(\alpha|\beta),(\lambda|p\mu)}=\emptyset$ unless $|\beta|=p|\mu|$ (or equivalently, $|\alpha|=|\lambda|$).
 % \item [(iv)] The map $\vartheta:\suppL{(\alpha|\beta),(\lambda|p\mu)}\to \suppL{(\beta|\alpha),(\m(\lambda(0))+p\mu|\lambda-\lambda(0))}$ defined by $\vartheta((\b{\gamma}|\b{\delta}))=(\b{\delta}|\b{\gamma})$ is bijective.
\end{enumerate}
\end{lem}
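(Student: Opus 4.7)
The plan is to unpack the defining conditions of $\Lambda((\alpha|\beta),\rho)$ from Definition \ref{D: module W} and of $\suppL{(\alpha|\beta),(\lambda|p\mu)}$ from Definition \ref{D: suppL}, and then carefully track the sizes $|\b{\gamma}^{(i)}|$ and $|\b{\delta}^{(i)}|$. Both parts will reduce to elementary bookkeeping, combining the identity $|\b{\gamma}^{(i)}|+|\b{\delta}^{(i)}|=n_i=|\lambda(i)|+|\mu(i-1)|$, the component-wise equalities $|\b{\gamma}^{(i)}|=|\lambda(i)|$ and $|\b{\delta}^{(i)}|=|\mu(i-1)|$ for $i\geq 1$, and the dominance constraint $(\lambda(0)|\varnothing)\unrhd(\wp(\b{\gamma}^{(0)})|\wp(\b{\delta}^{(0)}))$ at $i=0$.

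For part (i), assuming $\lambda(0)=\varnothing$, I will read off the $i=0$ dominance clause: its first inequality forces $\wp(\b{\gamma}^{(0)})=\varnothing$, and its second then forces $\wp(\b{\delta}^{(0)})=\varnothing$, yielding $\b{\gamma}^{(0)}=\b{\delta}^{(0)}=\varnothing$. For the converse I will pick an element of the (by hypothesis nonempty) $\Lambda^\circ$ whose $0$th pieces vanish; the equality $|\b{\gamma}^{(0)}|+|\b{\delta}^{(0)}|=n_0=|\lambda(0)|$ (using the convention $\mu(-1)=\varnothing$) immediately gives $\lambda(0)=\varnothing$.

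For part (ii), assuming $|\beta|=p|\mu|$, I will expand $\beta=\sum_{i\geq 0}p^i\b{\delta}^{(i)}$ and substitute $|\b{\delta}^{(i)}|=|\mu(i-1)|$ for $i\geq 1$; this yields $|\beta|=|\b{\delta}^{(0)}|+p|\mu|$, forcing $\b{\delta}^{(0)}=\varnothing$. To set up the bijection $\phi$, the key observation is that once $\b{\delta}^{(0)}=\varnothing$ the dominance clause (c) collapses to $\lambda(0)\unrhd\wp(\b{\gamma}^{(0)})$ together with $|\b{\gamma}^{(0)}|=|\lambda(0)|$, which is exactly the $i=0$ membership condition for $(\b{\gamma}|\pmb{\varnothing})\in\suppL{(\alpha|\varnothing),(\lambda|\varnothing)}$; the remaining pieces $(\b{\delta}^{(i)})_{i\geq 1}$ similarly record membership in $\suppL{(\beta|\varnothing),(p\mu|\varnothing)}$ after noting the index shift $(p\mu)(0)=\varnothing$ and $(p\mu)(j)=\mu(j-1)$ for $j\geq 1$. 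Injectivity of $\phi$ is immediate from the formula; surjectivity amounts to reassembling the two factors and verifying that the $n_i$'s on the two sides recombine correctly. For the final implication of (ii), I will pick any element of $\Lambda^\circ$ and compute directly $|\beta|=\sum_{i\geq 1}p^i|\b{\delta}^{(i)}|=\sum_{i\geq 1}p^i|\mu(i-1)|=p|\mu|$, which closes the loop.

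The only subtlety will be keeping straight the re-indexing $(\alpha,\beta,\lambda,\mu)\mapsto(\beta,\varnothing,p\mu,\varnothing)$ under which the $\beta$-half of $\suppL{(\alpha|\beta),(\lambda|p\mu)}$ lands inside $\suppL{(\beta|\varnothing),(p\mu|\varnothing)}$, together with the one-step $p$-adic shift of $p\mu$ that makes the $i=0$ dominance condition in the target vacuous; once this is set up carefully, nothing deeper than size-counting is required.
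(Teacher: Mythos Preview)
Your proposal is correct and follows essentially the same approach as the paper: the paper's proof is a two-line remark that ``most parts follow from Definition \ref{D: suppL}'' together with the observation that $(p\mu)(i)=\mu(i-1)$, and your plan is precisely the detailed unpacking of those definitions and that index shift. Nothing is missing or done differently---you have simply written out the size-counting and dominance checks that the authors leave implicit.
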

\begin{proof} Most parts of parts (i) and (ii) follow from Definition \ref{D: suppL}. The fact that $\phi$ is a bijection follows easily by noting that $\lambda(i)\unrhd \wp(\b{\gamma}^{(i)})$ and $(p\mu)(i)=\mu(i-1)\unrhd \wp(\b{\delta}^{(i)})$ for all $i\in \NN_0$.
%For part (iii), let $\xi=\m(\lambda(0))+p\mu$ and $p\zeta=\lambda-\lambda(0)$. Then, for any $(\b{\gamma}|\b{\delta})\in\Lambda^\circ$, we have $\xi(i)=\mu(i-1)\unrhd \b{\delta}^{(i)}$ and $\zeta(i-1)=\lambda(i)\unrhd \b{\gamma}^{(i)}$ for all $i\in\NN$ and $(\xi(0)|\varnothing)=(\m(\lambda(0))|\varnothing)\unrhd (\b{\delta}^{(0)}|\b{\gamma}^{(0)})$ since $(\lambda(0)|\varnothing)\unrhd (\b{\gamma}^{(0)}|\bf{\delta}^{(0)})$. Therefore $(\b{\delta}|\b{\gamma})\in \suppL{(\beta|\alpha),(\xi|p\zeta)}$. The inverse of the function $\vartheta$ is clear.
\end{proof}

We have the following immediate proposition. This serves as a motivation why we are interested in the case when $|\beta|=p|\mu|$.

\begin{prop}\label{P: Kostka eql zero} Let $(\alpha|\beta),(\lambda|p\mu)\in\P^2(mp)$ for some nonnegative integer $m$. If $\lambda(0)=\varnothing$, i.e., $Y(\lambda|p\mu)$ lies in the principal block of $\sym{mp}$, then $k_{(\alpha|\beta),(\lambda|p\mu)}=0$ unless $|\beta|=p|\mu|$.
\end{prop}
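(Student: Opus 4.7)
The plan is to argue by contrapositive, using the expansion provided by the Klyachko-type formula in Theorem~\ref{T: Kostka reduction}. Writing
\[k_{(\alpha|\beta),(\lambda|p\mu)}=\sum_{(\b{\gamma}|\b{\delta})\in \suppL{(\alpha|\beta),(\lambda|p\mu)}}k_{(\b{\gamma}^{(0)}|\b{\delta}^{(0)}),(\lambda(0)|\varnothing)}\prod_{i=1}^\infty k_{\b{\gamma}^{(i)},\lambda(i)}k_{\b{\delta}^{(i)},\mu(i-1)},\]
the first observation is that if the left-hand side is non-zero, then the indexing set $\suppL{(\alpha|\beta),(\lambda|p\mu)}$ must be non-empty; I would then fix an arbitrary element $(\b{\gamma}|\b{\delta})$ in it.

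Next I would invoke Lemma~\ref{L:first part null}(i): the hypothesis $\lambda(0)=\varnothing$ forces $\b{\gamma}^{(0)}=\varnothing=\b{\delta}^{(0)}$ for every such element. Combining this with the combinatorial constraints packaged in Definitions~\ref{D: module W} and~\ref{D: suppL}, namely $\beta=\sum_{i=0}^\infty p^i\cdot\b{\delta}^{(i)}$ together with $|\b{\delta}^{(i)}|=|\mu(i-1)|$ for every $i\geq 1$, the size of $\beta$ is forced to be
\[|\beta|=\sum_{i=1}^\infty p^i|\b{\delta}^{(i)}|=\sum_{i=1}^\infty p^i|\mu(i-1)|=p\sum_{j=0}^\infty p^j|\mu(j)|=p|\mu|,\]
where the last equality uses the $p$-adic expansion $\mu=\sum_{j=0}^\infty p^j\cdot\mu(j)$. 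This establishes the desired contrapositive: if $|\beta|\neq p|\mu|$, then no tuple $(\b{\gamma}|\b{\delta})$ satisfying the constraints of $\suppL{(\alpha|\beta),(\lambda|p\mu)}$ can exist, so the sum is empty and the signed $p$-Kostka number vanishes.

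I expect no serious obstacle here: the argument is essentially bookkeeping among the combinatorial constraints cut out by $\suppL{\cdot}$, and all the substantive work has already been done in Theorem~\ref{T: Kostka reduction} and Lemma~\ref{L:first part null}. The only subtlety worth flagging is that the identification of the ``$\lambda(0)=\varnothing$'' condition with membership in the principal block of $F\sym{mp}$ (recorded as an \emph{i.e.} in the statement) is a separate fact about blocks of signed Young modules, which is not actually needed for the implication being proved here.
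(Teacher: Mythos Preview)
Your argument is correct and matches the paper's approach: both proceed via Theorem~\ref{T: Kostka reduction} and Lemma~\ref{L:first part null}(i), the only difference being that the paper packages your size computation as the ``Furthermore'' clause of Lemma~\ref{L:first part null}(ii) rather than writing it out. The paper also supplies a reference (\cite[Corollary 5.2.9]{HemmerKujawaNakano}) for the block identification you flagged as a separate fact.
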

\begin{proof} By \cite[Corollary 5.2.9]{HemmerKujawaNakano}, the signed Young module $Y(\lambda|p\mu)$ belongs to the block labelled by the $p$-core of $\lambda$. Therefore $\lambda(0)=\varnothing$ is equivalent to $Y(\lambda|p\mu)$ lies in the principal block of $F\sym{mp}$. By Lemma \ref{L:first part null}(i) and (ii), we see that $\suppL{(\alpha|\beta),(\lambda|p\mu)}=\emptyset$ unless $|\beta|=p|\mu|$. This translates to our result using Theorem \ref{T: Kostka reduction}.
\end{proof}

We are now ready to state the main of this section. It generalizes Theorem \ref{T: RowRemoval} to the signed $p$-Kostka numbers. We refer the reader to Section 2.1 for the combinatorics of row cuts of partitions. This is crucial to state the following result.

\begin{thm}\label{T: signedRowRemoval} Let $(\alpha|\beta),(\lambda|p\mu)\in\P^2(n)$ such that $|\beta|=p|\mu|$ and the pairs $(\alpha,\lambda)$ and $(\beta,p\mu)$ admit horizontal $r$- and $s$-row cuts respectively. Then \[k_{(\alpha|\beta),(\lambda|p\mu)}=k_{\trimt{\alpha}{r},\trimt{\lambda}{r}}k_{\trimt{\beta}{s},\trimt{p\mu}{s}} k_{\trimb{\alpha}{r},\trimb{\lambda}{r}} k_{\trimb{\beta}{s},\trimb{p\mu}{s}}.\]
\end{thm}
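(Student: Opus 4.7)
The plan is to reduce $k_{(\alpha|\beta),(\lambda|p\mu)}$ to a product of two ordinary $p$-Kostka numbers using Klyachko's multiplicity formula (Theorem~\ref{T: Kostka reduction}), and then apply the Bowman--Giannelli row-removal theorem (Theorem~\ref{T: RowRemoval}) to each factor independently.

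I begin by expanding $k_{(\alpha|\beta),(\lambda|p\mu)}$ via Theorem~\ref{T: Kostka reduction}. The hypothesis $|\beta|=p|\mu|$ forces $|\alpha|=|\lambda|$ and, by Lemma~\ref{L:first part null}(ii), makes $\b{\delta}^{(0)}=\varnothing$ for every $(\b{\gamma}|\b{\delta})\in \suppL{(\alpha|\beta),(\lambda|p\mu)}$; in particular the initial factor $k_{(\b{\gamma}^{(0)}|\b{\delta}^{(0)}),(\lambda(0)|\varnothing)}$ simplifies to $k_{\b{\gamma}^{(0)},\lambda(0)}$. Using the bijection $\phi$ of the same lemma, together with the identities $(p\mu)(0)=\varnothing$ and $(p\mu)(i)=\mu(i-1)$ for $i\geq 1$, the sum factorizes as
\[
\left(\sum_{\b{\gamma}}\prod_{i\geq 0} k_{\b{\gamma}^{(i)},\lambda(i)}\right)\left(\sum_{\b{\delta}}\prod_{i\geq 0} k_{\b{\delta}^{(i)},(p\mu)(i)}\right),
\]
where the two sums run independently over $\suppL{(\alpha|\varnothing),(\lambda|\varnothing)}$ and $\suppL{(\beta|\varnothing),(p\mu|\varnothing)}$.

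By the Klyachko multiplicity formula (the corollary immediately after Theorem~\ref{T: Kostka reduction}), the first bracket equals $k_{\alpha,\lambda}$ and the second equals $k_{\beta,p\mu}$, yielding the intermediate identity
\[
k_{(\alpha|\beta),(\lambda|p\mu)}=k_{\alpha,\lambda}\,k_{\beta,p\mu}.
\]
Since the pairs $(\alpha,\lambda)$ and $(\beta,p\mu)$ admit horizontal $r$- and $s$-row cuts respectively, Theorem~\ref{T: RowRemoval} gives $k_{\alpha,\lambda}=k_{\trimt{\alpha}{r},\trimt{\lambda}{r}}\,k_{\trimb{\alpha}{r},\trimb{\lambda}{r}}$ and $k_{\beta,p\mu}=k_{\trimt{\beta}{s},\trimt{p\mu}{s}}\,k_{\trimb{\beta}{s},\trimb{p\mu}{s}}$; multiplying produces the claimed formula.

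I do not foresee any real obstacle. The Brauer-quotient machinery underlying Theorem~\ref{T: Kostka reduction} and the Bowman--Giannelli row-removal theorem do the genuine work, so the remaining argument is essentially bookkeeping using Definition~\ref{D: suppL} and the fact that the $p$-adic expansion of $p\mu$ is shifted by one from that of $\mu$. The only point that deserves care is verifying that the bijection $\phi$ respects the product structure of the summand so that the double sum genuinely factors as the product of two single sums.
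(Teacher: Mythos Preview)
Your proposal is correct and follows essentially the same route as the paper: apply Theorem~\ref{T: Kostka reduction}, use Lemma~\ref{L:first part null}(ii) and the bijection $\phi$ to factor the sum as $k_{\alpha,\lambda}\,k_{\beta,p\mu}$, and then invoke Theorem~\ref{T: RowRemoval} on each factor. The paper's proof is identical in structure, with the only cosmetic difference being that it recognizes the two bracketed sums directly as instances of Theorem~\ref{T: Kostka reduction} with $\beta=\mu=\varnothing$ rather than citing the Klyachko corollary separately.
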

\begin{proof} By Lemma \ref{L:first part null}(ii), Theorems \ref{T: RowRemoval} and \ref{T: Kostka reduction}, we have
\begin{align*}
k_{(\alpha|\beta),(\lambda|p\mu)}&=\sum_{(\b{\gamma}|\b{\delta})\in \suppL{(\alpha|\beta),(\lambda|p\mu)}}k_{\b{\gamma}^{(0)},\lambda(0)}\prod_{i=1}^\infty k_{\b{\gamma}^{(i)},\lambda(i)}k_{\b{\delta}^{(i)},\mu(i-1)}\\
&=\left (\sum_{(\b{\gamma}|\pmb{\varnothing})\in \suppL{(\alpha|\varnothing),(\lambda|\varnothing)}}\prod_{i=0}^\infty k_{\b{\gamma}^{(i)},\lambda(i)}\right )\left (\sum_{(\b{\delta}|\pmb{\varnothing})\in \suppL{(\beta|\varnothing),(p\mu|\varnothing)}}\prod_{i=0}^\infty k_{\b{\delta}^{(i)},\mu(i-1)}\right )\\
&=k_{\alpha,\lambda} k_{\beta,p\mu}\\
&=k_{\trimt{\alpha}{r},\trimt{\lambda}{r}}k_{\trimt{\beta}{s},\trimt{p\mu}{s}} k_{\trimb{\alpha}{r},\trimb{\lambda}{r}} k_{\trimb{\beta}{s},\trimb{p\mu}{s}}.
\end{align*}
\end{proof}

Recall that the $p$-Kostka and signed $p$-Kostka matrices are denoted by $\K_n$ and $\sgnK_n$ with respect to fixed total orders $\succ_n$ and $\succ$ on $\P(n)$ and $\P^2_p(n)$ respectively such that the restriction of $\succ$ to $\P(n)$ is $\succ_n$ and $\succ_n$ refines the dominant order $\unrhd$. Also, we denote the Kronecker product of two matrices $A,B$ as \[A\otimes B=\begin{pmatrix} A_{11}B&\cdots&A_{1n}B\\ \vdots&&\vdots\\ A_{m1}B&\cdots&A_{mn}B\end{pmatrix}\] if $A$ is an $(m\times n)$-matrix.
Theorem \ref{T: signedRowRemoval} implies the following corollaries.
The first one gives a proof of our Main Result B in the introduction.

\begin{cor}\label{C: signedrowremoval}\
 \begin{enumerate}[(i)]
 \item Let $(\alpha|\beta),(\lambda|p\mu)\in \P^2(n)$ such that $|\beta|=p|\mu|$. We have \[k_{(\alpha|\beta),(\lambda|p\mu)}=k_{\alpha,\lambda}\cdot k_{\beta,p\mu}.\] In this case, the signed $p$-Kostka number $k_{(\alpha|\beta),(\lambda|p\mu)}$ is nonzero if and only if there exists expansions (not necessarily $p$-adic expansions) $\alpha=\sum_{i=0}^\infty p^i\b{\gamma}^{(i)}$ and $\beta=\sum_{i=1}^\infty p^i\b{\delta}^{(i)}$ such that, for all $i\in\NN_0$, $|\lambda(i)|=|\b{\gamma}^{(i)}|, $ $\lambda(i)\unrhd \wp(\b{\gamma}^{(i)})$, $|\mu(i-1)|=|\b{\delta}^{(i)}|$ and $\mu(i-1)\unrhd \wp(\b{\delta}^{(i)})$.
 \item Let $(\alpha|\beta),(\lambda|p\mu)\in \P^2_p(n)$ such that $|\alpha|=|\lambda|-|\lambda(0)|$. We have \[k_{(\alpha|\beta),(\lambda|p\mu)}=k_{\alpha,\lambda-\lambda(0)}\cdot k_{\beta,\m(\lambda(0))+p\mu}.\]
 %\item [(iii)] For each $\alpha,\lambda\in\P(n)$, we have $k_{p\alpha,p\lambda}=k_{\alpha,\lambda}$.
 \item The signed $p$-Kostka matrix $\sgnK_n$ is lower unitriangular of the form \[\begin{pmatrix} \K_n\\ *&\K_{n-p}\\ *&*&\K_2\otimes \K_{n-2p}\\ *&*&*&\K_3\otimes\K_{n-3p}\\ \vdots&\vdots&\vdots&\vdots&\ddots\\ *&*&*&*&\cdots& \K_d\otimes \K_{n-dp}\end{pmatrix}\] where $0\leq n-dp\leq p-1$ and, for each $0\leq s\leq d$, $\K_s\otimes \K_{n-sp}$ is the Kronecker product of the $p$-Kostka matrices $\K_s$ and $\K_{n-sp}$.
 \end{enumerate}
\end{cor}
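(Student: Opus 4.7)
For part (i), applying Theorem \ref{T: signedRowRemoval} with trivial row-cuts $r=s=0$ immediately collapses the four-factor product via $k_{\varnothing,\varnothing}=1$ to $k_{\alpha,\lambda}\cdot k_{\beta,p\mu}$. The stated non-vanishing criterion then follows by applying the Klyachko multiplicity formula (the unnumbered corollary after Theorem \ref{T: Kostka reduction}) separately to each of these two $p$-Kostka factors, using that the $p$-adic digits of $p\mu$ are $(p\mu)(0)=\varnothing$ and $(p\mu)(i)=\mu(i-1)$ for $i\geq 1$, which is exactly what forces $\b{\delta}^{(0)}=\varnothing$ in the expansion of $\beta$.

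For part (ii) I would return to Theorem \ref{T: Kostka reduction}. The hypothesis $|\alpha|=|\lambda|-|\lambda(0)|=\sum_{i\geq 1}p^i|\lambda(i)|$ combined with the size conditions $|\b{\gamma}^{(i)}|=|\lambda(i)|$ in Definition \ref{D: suppL}(a) forces $\b{\gamma}^{(0)}=\varnothing$ and $|\b{\delta}^{(0)}|=|\lambda(0)|$ on every $(\b{\gamma}|\b{\delta})\in\suppL{(\alpha|\beta),(\lambda|p\mu)}$. The $0$th factor in the product then rewrites as
\[k_{(\varnothing|\b{\delta}^{(0)}),(\lambda(0)|\varnothing)}=[M^{\b{\delta}^{(0)}}\otimes\sgn:Y^{\lambda(0)}]=[M^{\b{\delta}^{(0)}}:Y^{\m(\lambda(0))}]=k_{\b{\delta}^{(0)},\m(\lambda(0))},\]
using $M(\varnothing|\b{\delta}^{(0)})\cong M^{\b{\delta}^{(0)}}\otimes\sgn$ together with $Y^{\lambda(0)}\otimes\sgn\cong Y^{\m(\lambda(0))}$ (the latter because $Y^{\lambda(0)}$ is the projective cover of $D_{\lambda(0)}$ and $D_{\lambda(0)}\otimes\sgn\cong D_{\m(\lambda(0))}$). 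The remaining constraints on the tuples $(\b{\gamma}^{(i)})_{i\geq 1}$ and $(\b{\delta}^{(i)})_{i\geq 0}$ decouple, so the sum factors as a product of two Klyachko-type sums, which I identify as $k_{\alpha,\lambda-\lambda(0)}$ and $k_{\beta,\m(\lambda(0))+p\mu}$ respectively. This identification rests on the observation that the $p$-adic digits of $\m(\lambda(0))+p\mu$ are exactly $\m(\lambda(0))$ at position $0$ and $\mu(i-1)$ at position $i\geq 1$, which follows from $0\leq\m(\lambda(0))_j\leq p-1$ and $(p\mu)_j\in p\NN_0$ for every $j$.

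Part (iii) is assembled from three ingredients. Lower unitriangularity: Donkin's decomposition $K(\lambda|p\mu)\cong\List(\lambda|p\mu)\oplus C(\lambda|p\mu)$ together with the exactness of the Schur functor gives $k_{(\alpha|\beta),(\lambda|p\mu)}\neq 0\Rightarrow(\lambda|p\mu)\unrhd(\alpha|\beta)$, and since $\succeq$ refines $\unrhd$ all entries strictly above the diagonal vanish while the diagonal entries equal $1$. Block shape: $k_{(\alpha|p\mu''),(\lambda|p\mu)}=0$ as soon as $|\mu''|<|\mu|$, because the size condition $|\b{\delta}^{(i)}|=|\mu(i-1)|$ for $i\geq 1$ (Definition \ref{D: suppL}(b)) forces $\sum_{i\geq 1}p^i|\b{\delta}^{(i)}|=p|\mu|$ and hence $|\b{\delta}^{(0)}|=|\beta|-p|\mu|=p(|\mu''|-|\mu|)<0$, making $\suppL{(\alpha|\beta),(\lambda|p\mu)}$ empty and Theorem \ref{T: Kostka reduction} zero. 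Diagonal block at index $s$: pairs with $|\mu|=|\mu''|=s$ satisfy $|\beta|=p|\mu|$, so part (i) yields $k_{(\alpha|p\mu''),(\lambda|p\mu)}=k_{\alpha,\lambda}\cdot k_{p\mu'',p\mu}$; a short Klyachko calculation using $(p\mu)(0)=\varnothing$ and the substitution $\b{\eta}^{(j)}=\b{\gamma}^{(j+1)}$ reduces $k_{p\mu'',p\mu}$ to $k_{\mu'',\mu}$, so ordering rows and columns of the block lexicographically with outer loop on $\mu\in\P(s)$ and inner loop on $\alpha\in\P(n-sp)$ produces exactly $\K_s\otimes\K_{n-sp}$.

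The main obstacle is part (ii): carefully tracking the Mullineux twist on the $0$th factor and then verifying that the Klyachko-type sum over the $(\b{\delta}^{(i)})_{i\geq 0}$ reassembles into the single $p$-Kostka number $k_{\beta,\m(\lambda(0))+p\mu}$ for the combined partition. By contrast, parts (i) and (iii) are quick consequences of Theorem \ref{T: signedRowRemoval}, Klyachko's formula, and the basic size-vanishing built into Definition \ref{D: suppL}.
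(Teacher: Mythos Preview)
Your arguments for parts (i) and (iii) are essentially the paper's: $r=s=0$ in Theorem~\ref{T: signedRowRemoval} for (i), and dominance for lower unitriangularity plus part (i) for the diagonal blocks in (iii). Your direct Klyachko reduction $k_{p\mu'',p\mu}=k_{\mu'',\mu}$ is a clean replacement for the paper's appeal to part (ii) at that step.

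For part (ii) you take a genuinely different route. The paper invokes \cite[Theorem~3.18]{DL}, which gives $Y(\lambda|p\mu)\otimes\sgn\cong Y(\m(\lambda(0))+p\mu\,|\,\lambda-\lambda(0))$ in one stroke; combined with $M(\alpha|\beta)\otimes\sgn\cong M(\beta|\alpha)$ this yields $k_{(\alpha|\beta),(\lambda|p\mu)}=k_{(\beta|\alpha),(\m(\lambda(0))+p\mu\,|\,\lambda-\lambda(0))}$, and then part (i) applies directly because $|\alpha|=|\lambda-\lambda(0)|$. Your approach instead stays inside Theorem~\ref{T: Kostka reduction}: you force $\b{\gamma}^{(0)}=\varnothing$, sign-twist only the $0$th factor $k_{(\varnothing|\b{\delta}^{(0)}),(\lambda(0)|\varnothing)}=k_{\b{\delta}^{(0)},\m(\lambda(0))}$, observe that the constraints on $\b{\gamma}$ and $\b{\delta}$ decouple (condition (c) of Definition~\ref{D: suppL} becomes vacuous once $\b{\gamma}^{(0)}=\varnothing$), and reassemble via Klyachko. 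This is correct and has the merit of being self-contained, avoiding the external citation; the paper's route is shorter but imports more.

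One correction: your justification that the $p$-adic digits of $\m(\lambda(0))+p\mu$ are $\m(\lambda(0)),\mu(0),\mu(1),\ldots$ is right, but the reason you give, ``$0\leq\m(\lambda(0))_j\leq p-1$'', is false in general (e.g.\ $(p,1)$ is $p$-restricted with first part $p$). The correct reason is simply that $\m(\lambda(0))$ is a $p$-restricted \emph{partition} and each $\mu(i-1)$ is $p$-restricted, so $\m(\lambda(0))+\sum_{i\geq 1}p^i\mu(i-1)$ is by definition the $p$-adic expansion of the left-hand side.
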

\begin{proof} For part (i), the equality is obtained by taking $r=0=s$ in Theorem \ref{T: signedRowRemoval}. Therefore, $k_{(\alpha|\beta),(\lambda|p\mu)}\neq 0$ if and only if $k_{\alpha,\lambda}\neq 0\neq k_{\beta,p\mu}$ and hence the second assertion follows from \cite[Remark, page 55]{Donkin1} noting that $(p\mu)(i)=\mu(i-1)$.

For part (ii), by \cite[Theorem 3.18]{DL} and part (i), \[k_{(\alpha|\beta),(\lambda|p\mu)}=k_{(\beta|\alpha),(\m(\lambda(0))+p\mu|\lambda-\lambda(0))}= k_{\beta,\m(\lambda(0))+p\mu}k_{\alpha,\lambda-\lambda(0)}.\]

For part (iii), since the total order $\succ$ on $\P^2_p(n)$ refines the dominance order, $\sgnK_n$ is lower unitriangular. Suppose that $(\zeta|p\eta),(\lambda|p\mu)\in\P^2_p(n)$ and $s=|\mu|=|\eta|$. By parts (i) and (ii), we have \[(\sgnK_n)_{(\zeta|p\eta),(\lambda|p\mu)}=k_{(\zeta|p\eta),(\lambda|p\mu)}=k_{p\eta,p\mu}\cdot k_{\zeta,\lambda}= k_{\eta,\mu}\cdot k_{\zeta,\lambda}=(\K_s)_{\eta,\mu}(\K_{n-sp})_{\zeta,\lambda}.\] We have chosen the total order $\succ$ so that the signed $p$-Kostka matrix has the form as in the statement.
\end{proof}

\begin{cor}\label{C: principal block} Let $B_0$ be the principal block of $\sym{mp}$ and $(\alpha|\beta)\in\P^2(mp)$. Then \[M(\alpha|\beta)_{B_0}\cong \bigoplus_{\substack{(\lambda|p\mu)\in\P^2_p(mp),\\ \lambda(0)=\varnothing,\ |\beta|=p|\mu|}}Y(\lambda|p\mu)^{\oplus k_{\alpha,\lambda}k_{\beta,p\mu}}.\]
\end{cor}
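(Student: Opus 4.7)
The proof is essentially a bookkeeping exercise combining the block-theoretic criterion for signed Young modules with the multiplicity formula from Corollary \ref{C: signedrowremoval}(i). Here is the plan.

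First, I would start from the Krull--Schmidt decomposition of the signed Young permutation module
\[
M(\alpha|\beta)\cong \bigoplus_{(\lambda|p\mu)\in\P^2_p(mp)} Y(\lambda|p\mu)^{\oplus k_{(\alpha|\beta),(\lambda|p\mu)}},
\]
and then project onto the principal block $B_0$. The key input is the block identification of signed Young modules used in the proof of Proposition \ref{P: Kostka eql zero}: by \cite[Corollary 5.2.9]{HemmerKujawaNakano}, $Y(\lambda|p\mu)$ lies in the block of $F\sym{mp}$ determined by the $p$-core of $\lambda$, so the summands that contribute to $M(\alpha|\beta)_{B_0}$ are precisely those indexed by pairs $(\lambda|p\mu)\in\P^2_p(mp)$ with $\lambda(0)=\varnothing$.

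Next, I would apply Proposition \ref{P: Kostka eql zero} to kill all surviving summands with $|\beta|\neq p|\mu|$: whenever $\lambda(0)=\varnothing$, the signed $p$-Kostka number $k_{(\alpha|\beta),(\lambda|p\mu)}$ vanishes unless $|\beta|=p|\mu|$. This already restricts the indexing set to the one appearing in the statement.

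Finally, for every remaining pair $(\lambda|p\mu)$ (so with $\lambda(0)=\varnothing$ and $|\beta|=p|\mu|$), the hypothesis $|\beta|=p|\mu|$ allows me to invoke Corollary \ref{C: signedrowremoval}(i) directly to obtain the factorisation
\[
k_{(\alpha|\beta),(\lambda|p\mu)} = k_{\alpha,\lambda}\cdot k_{\beta,p\mu},
\]
which is exactly the multiplicity claimed. Assembling the three steps yields the desired block decomposition. There is no real obstacle here — the work was done in Theorem \ref{T: signedRowRemoval} and its corollary; the present statement is simply the block-projected shadow of that identity, with the restriction on $(\lambda|p\mu)$ coming for free from the two block-theoretic vanishing criteria above.
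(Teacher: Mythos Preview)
Your proposal is correct and follows essentially the same approach as the paper: the paper's proof simply cites Proposition \ref{P: Kostka eql zero} (for the vanishing when $|\beta|\neq p|\mu|$, together with the block criterion $\lambda(0)=\varnothing$) and Corollary \ref{C: signedrowremoval}(i) (for the factorisation $k_{(\alpha|\beta),(\lambda|p\mu)}=k_{\alpha,\lambda}k_{\beta,p\mu}$), which is exactly the chain of reasoning you spell out.
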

\begin{proof} Let $Y(\lambda|p\mu)$ belong to $B_0$, i.e., $\lambda(0)=\varnothing$, and suppose that $k_{(\alpha|\beta),(\lambda|p\mu)}>0$. Our result now follows from Proposition \ref{P: Kostka eql zero} and Corollary \ref{C: signedrowremoval}(i).
\end{proof}

We conclude the section with a short example to illustrate the main results just obtained.

\begin{eg} Let us consider $p=3$ and $n=6$. For $|\alpha|=|\lambda|$ and $p\mu\unrhd p\zeta$, using Corollary \ref{C: signedrowremoval}(i), we have \[k_{(\alpha|p\zeta),(\lambda|p\mu)}=k_{\alpha,\lambda}k_{p\zeta,p\mu}=k_{\alpha,\lambda}\] since, for example, $k_{(3,3),(6)}=1$. Therefore the three diagonal blocks in Example \ref{Eg: signKostka6} are now fully explained and they are precisely $\K_6$, $\K_3$ and $\K_2\otimes \K_0$ as in Corollary \ref{C: signedrowremoval}(iii).

Next we also explain why \[k_{(\alpha|(3)),(\lambda|\varnothing)}=0\] for any $\alpha,\lambda$ such that $3=|\alpha|=|\lambda|-|\lambda(0)|$. Using Corollary \ref{C: signedrowremoval}(ii), we have \[k_{(\alpha|(3)),(\lambda|\varnothing)}=k_{\alpha,(3)}k_{(3),\m(\lambda(0))}.\] But $\m(\lambda(0))$ is necessarily $p$-restricted (it is the label of the simple module $D_{\lambda(0)}\otimes\sgn$) and therefore $\m(\lambda(0))\neq (3)$, which implies that $k_{(3),\m(\lambda(0))}=0$.

Next we consider the principal block $B_0$ of $F\sym{6}$. The signed Young modules belonging to $B_0$ are labelled by $((6)|\varnothing)$, $((3^2)|\varnothing)$, $((3)|(3))$, $(\varnothing|(6))$ and $(\varnothing|(3^2))$. Corollary \ref{C: principal block} shows that the entries of the columns below the diagonal blocks and labelled by the bipartitions labelling $B_0$ are all equal to zero. For an interesting example, $k_{(\alpha|(3)),((6)|\varnothing)}=0=k_{(\alpha|(3)),((3^2)|\varnothing)}$ where $|\alpha|=3$, i.e., \[M(\alpha|(3))_{B_0}\cong\left \{\begin{array}{ll} Y((3)|(3)) &\text{if $\alpha=(3)$,}\\ 0&\text{if $\alpha=(2,1),(1^3)$.}\end{array}\right .\] In general, in $F\sym{2p}$, $M(\alpha|(p))_{B_0}$ is isomorphic to $Y((p)|(p))$ if $\alpha=(p)$ and 0 otherwise.
\end{eg}

\section{The signed $p$-Kostka numbers $k_{(\alpha|\beta),(\lambda|p\mu)}$ in the general case}\label{S: not equal case}

In this section, we examine the validity of Theorem \ref{T: signedRowRemoval} without the condition $|\beta|=p|\mu|$. This general case is slightly more complicated and we need a separate treatment. More precisely, we need to examine the Specht series for skew representations introduced by James-Peel in \cite{JamesPeel}.

We now describe the minimal ingredient from \cite{JamesPeel} that we shall require in the proof of Lemma \ref{L: Specht submodule}. A diagram is a finite subset of $\Z\times\Z$. Let $\alpha$ and $\beta$ be partitions such that $\alpha_i\geq \beta_i$ for all $i\in\NN$. We have the skew diagram \[D=[\alpha\backslash\beta]=\{(i,j)\in\NN\times\NN:1\leq i\leq\ell(\alpha),\ \beta_i<j\leq \alpha_i\}.\] Let $-D$ (or $-\alpha\backslash\beta$) be the diagram $\{(i,-j):(i,j)\in D\}$ and let $n=|\alpha|-|\beta|$. There is a $F\sym{n}$-module $S^{\alpha\backslash\beta}$ (or $S^D$) labelled by the diagram $D$ called a skew representation. In the case when $\beta=({\alpha_{s+1}}^s)$ for some $s\in\NN_0$, we have that $S^{\alpha\backslash\beta}$ is isomorphic to $\ind_{\sym{d}\times\sym{n-d}}^{\sym{n}}(S^{\trimt{\alpha}{s}-\beta}\boxtimes S^{\trimb{\alpha}{s}})$ where $d=|\trimt{\alpha}{s}|-|\beta|$ (see \cite[Page 345]{JamesPeel}). There is a unique node $(r,s)\in D$ such that, if $(i,j)\in D$, then $i\geq r$ and $j\leq s$. A node $(i,j)\in D$ is deposited in $D$ if $(i',j')\in D$ whenever $r\leq i'\leq i$ and $j\leq j'\leq s$. All the operations in the next paragraph fix the node $(r,s)$.

Let $D$ be a skew diagram and $(r,s)\in D$ be the unique node as in the previous paragraph. The operation $X$ is defined so that $D^X$ is a diagram obtained from $D$ by permuting the rows or columns of $D$. In this case, $S^D\cong S^{D^X}$ (see \cite[2.1]{JamesPeel}). Let $(i_1,j_1),(i_2,j_2)\in D$ but $(i_1,j_2)\not\in D$ and $(i_2,j_1)\not\in D$. The diagram $D^I$ (respectively, $D^K$), with respect to the nodes $(i_1,j_1)$ and $(i_2,j_2)$, is defined as the diagram obtained from $D$ by moving the nodes $(i_1,j)\in D$ such that $(i_2,j)\not\in D$ to $(i_2,j)$ (respectively, $(i,j_1)\in D$ such that $(i,j_2\not\in D$ to $(i,j_2)$)and keeping the remaining nodes; namely, $D^I$ and $D^K$ are obtained from $D$ by moving all the nodes in the $i_1$th row (respectively, $j_1$th column) to the respective vacant positions in $i_2$th row (respectively, $j_2$th column). Suppose there exists $i^*>r$ such that $(i^*,j)\not\in D$ for any $j\in \Z$ but there exists a node $(i_1,j_1)\in D$ for some $i_1>i^*$ or there exists $j^*<s$ such that $(i,j^*)\not\in D$ for any $i\in \Z$ but there exists a node $(i_1,j_1)\in D$ for some $j_1<j^*$, i.e., we have the condition \cite[Theorem 4.7 (ii)--(iii)]{JamesPeel}. Let $Y$ be the operation so that $D^Y$ is the diagram obtained from $D$ by exchanging the $(i+1)$th row (respectively, $(j+1)$th column) of with the $i$th row (respectively, $j$th column) successively in the order of and for each $i=i^*,i^*+1,\ldots$ (respectively, $j=j^*,j^*+1,\ldots$). Literally, $D^Y$ is obtained from $D$ by deleting the $i^*$th row (respectively, $j^*$th column). In this case, $S^D\cong S^{D^Y}$ (see \cite[Lemma 4.8]{JamesPeel}).

\begin{lem}\label{L: Specht submodule} Let $\lambda\in\P(n)$, $\lambda=\xi\cont\zeta$ for some partitions $\xi,\zeta$ and $d=|\xi|$. Then
\begin{enumerate}[(i)]
  \item $S^\lambda$ is isomorphic to a submodule of the induced module $\ind_{\sym{d}\times\sym{n-d}}^{\sym{n}}(S^\xi\boxtimes S^\zeta)$, and
  \item $\big[\ind_{\sym{d}\times\sym{n-d}}^{\sym{n}}(Y^\xi\boxtimes Y^\zeta):Y^{\lambda}\big]=1$.
\end{enumerate}
\end{lem}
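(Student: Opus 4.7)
The plan is to prove (i) by an explicit tabloid computation inside a canonical identification, and then to deduce (ii) from (i) by a short Krull--Schmidt argument.

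For part (i), since $\sym{\lambda}=\sym{\xi}\times\sym{\zeta}$ as subgroups of $\sym{d}\times\sym{n-d}\leq\sym{n}$, transitivity of induction yields the natural isomorphism $M^\lambda\cong\ind_{\sym{d}\times\sym{n-d}}^{\sym{n}}(M^\xi\boxtimes M^\zeta)$, under which $1\otimes(\{t_\xi\}\boxtimes\{t_\zeta\})$ corresponds to the initial tabloid $\{t_\lambda\}$ (with $t_\xi$, $t_\zeta$, $t_\lambda$ the standard initial tableaux). Exactness of induction embeds $\ind(S^\xi\boxtimes S^\zeta)$ as a submodule of $M^\lambda$, inside which $1\otimes(e_{t_\xi}\boxtimes e_{t_\zeta})$ is identified with the partial column sum $\sum_{\pi\in C_{t_\xi}\times C_{t_\zeta}}\sgn(\pi)\,\pi\cdot\{t_\lambda\}$, where $C_t$ denotes the column stabilizer. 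The hypothesis $\lambda=\xi\cont\zeta$ ensures that the $j$th column of $t_\lambda$ is the $j$th column of $t_\xi$ stacked above the $j$th column of $t_\zeta$, so $C_{t_\xi}\times C_{t_\zeta}\leq C_{t_\lambda}=\prod_j\sym{\lambda'_j}$. Choosing coset representatives $\sigma_1,\dots,\sigma_k\in C_{t_\lambda}$ of $C_{t_\xi}\times C_{t_\zeta}$ then gives
\[
e_{t_\lambda}=\sum_{i=1}^{k}\sgn(\sigma_i)\,\sigma_i\cdot\bigl(1\otimes(e_{t_\xi}\boxtimes e_{t_\zeta})\bigr)\in\ind(S^\xi\boxtimes S^\zeta),
\]
and (i) follows because $S^\lambda=F\sym{n}\cdot e_{t_\lambda}$.

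For part (ii), expanding each of $M^\xi$ and $M^\zeta$ into Young modules and applying induction gives
\[
M^\lambda\cong\bigoplus_{\alpha,\beta}k_{\xi,\alpha}\,k_{\zeta,\beta}\,\ind_{\sym{d}\times\sym{n-d}}^{\sym{n}}(Y^\alpha\boxtimes Y^\beta).
\]
Taking $Y^\lambda$-multiplicities, nonnegativity of every term together with $k_{\lambda,\lambda}=k_{\xi,\xi}=k_{\zeta,\zeta}=1$ forces the upper bound $\bigl[\ind(Y^\xi\boxtimes Y^\zeta):Y^\lambda\bigr]\leq 1$. For the matching lower bound, $S^\alpha$ lies in $Y^\alpha$ for $\alpha\in\{\xi,\zeta\}$ (as $Y^\alpha$ is the unique indecomposable summand of $M^\alpha$ containing $S^\alpha$); combined with (i) this gives the chain $S^\lambda\subseteq\ind(S^\xi\boxtimes S^\zeta)\subseteq\ind(Y^\xi\boxtimes Y^\zeta)\subseteq M^\lambda$ with the last inclusion as a direct summand. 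Let $e$ be the idempotent in $\mathrm{End}_{F\sym{n}}(M^\lambda)$ with image $\ind(Y^\xi\boxtimes Y^\zeta)$, and let $\pi:M^\lambda\to Y^\lambda$ be the projection onto the unique $Y^\lambda$-summand of $M^\lambda$ (which contains $S^\lambda$). The composite $\pi\circ e|_{Y^\lambda}:Y^\lambda\to Y^\lambda$ restricts to the identity on $S^\lambda$, hence is non-nilpotent; locality of $\mathrm{End}_{F\sym{n}}(Y^\lambda)$ then forces this composite to be an automorphism. Consequently $e|_{Y^\lambda}$ is a split monomorphism into $\ind(Y^\xi\boxtimes Y^\zeta)$, so $Y^\lambda\mid\ind(Y^\xi\boxtimes Y^\zeta)$, giving the matching lower bound.

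The main obstacle is the explicit coset decomposition used in (i): it is precisely the concatenation hypothesis $\lambda=\xi\cont\zeta$ that aligns the columns of $t_\xi$ and $t_\zeta$ within $t_\lambda$ and places $C_{t_\xi}\times C_{t_\zeta}$ inside $C_{t_\lambda}$, so that $1\otimes(e_{t_\xi}\boxtimes e_{t_\zeta})$ is genuinely a partial sum of $e_{t_\lambda}$. Without this concatenation structure the nesting of column stabilizers breaks down and the direct embedding would fail. Once (i) is in hand, (ii) reduces to elementary $p$-Kostka bookkeeping together with the local-endomorphism principle.
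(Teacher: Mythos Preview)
Your proof is correct, and part (i) takes a genuinely different and more elementary route than the paper. The paper proves (i) via the James--Peel machinery for skew representations: it realises $\ind(S^\xi\boxtimes S^\zeta)$ as a skew Specht module $S^{\alpha\backslash\beta}$ and then applies a sequence of diagram operations $X,Y,K$ (carefully avoiding $I$) to reach $-[\lambda]$, invoking \cite[Theorems 4.15--4.16]{JamesPeel} to conclude that $S^\lambda$ occurs as a submodule in the resulting Specht series. Your argument bypasses this apparatus entirely: the concatenation hypothesis $\lambda=\xi\cont\zeta$ (which forces $\xi_{\ell(\xi)}\geq\zeta_1$) places each column of $t_\zeta$ directly beneath the corresponding column of $t_\xi$ inside $t_\lambda$, yielding the inclusion $C_{t_\xi}\times C_{t_\zeta}\leq C_{t_\lambda}$ and the explicit coset expansion of $e_{t_\lambda}$. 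This is shorter, self-contained, and makes the role of the concatenation hypothesis completely transparent, at the cost of being specific to this situation (the James--Peel framework handles more general skew shapes).

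For part (ii) the two arguments are essentially the same. The paper writes the chain $S^\lambda\subseteq\ind(S^\xi\boxtimes S^\zeta)\subseteq\ind(Y^\xi\boxtimes Y^\zeta)\mid M^\lambda$ and then appeals directly to the Submodule Theorem characterisation of $Y^\lambda$ as the unique summand of $M^\lambda$ containing $S^\lambda$ to get $1\leq[\ind(Y^\xi\boxtimes Y^\zeta):Y^\lambda]\leq[M^\lambda:Y^\lambda]=1$. Your local-endomorphism-ring argument is a slightly more explicit unpacking of the same lower bound; it works because in \emph{any} indecomposable decomposition of $M^\lambda$ the canonical $S^\lambda$ lies inside the unique $Y^\lambda$-summand (by James's Submodule Theorem), so the composite $\pi\circ e|_{Y^\lambda}$ genuinely restricts to the identity on $S^\lambda$ as you claim. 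Your upper bound via the expansion of $M^\lambda$ into induced Young modules is equivalent to the paper's use of $[M^\lambda:Y^\lambda]=1$.
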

\begin{proof} We use the notation as in the paragraphs prior to the statement. Let $k=\ell(\xi)$, $\beta=({\zeta_1}^k)$, $\alpha=(\xi+\beta)\cont\zeta$ and let $D$ be the diagram of $\alpha\backslash\beta$. Then $S^{\alpha\backslash\beta}$ is a skew representation isomorphic to $\ind_{\sym{d}\times\sym{n-d}}^{\sym{n}}(S^\xi\boxtimes S^\zeta)$. By \cite[Theorems 4.15 and 4.16]{JamesPeel}, it suffices to show that we get the diagram $-[\lambda]$ by successive application of the operations either $X$, $Y$ or $K$ (and without $I$). We refer to Figure A for the following explanation. The red dot denotes the unique node $(r,s)$ in each diagram. The pair of blue dots in each diagram denotes the pair satisfying the operation $K$. We first apply the operation $X$ where $-\xi$ and $-\zeta$ denote the mirror images of $\xi$ and $\zeta$ respectively in the obvious way. All the nodes in $-\xi$ are deposited. Applying the process $K$, the most right column of $-\zeta$ moves to the bottom of the most right column of $-\xi$. We now apply the operation $Y$ to rearrange the columns, i.e., deleting the empty column. Now the pair of blue nodes satisfies the operation $K$. We can repeat the arguments for $\zeta_1$ times, since, by assumption $\xi_k\geq \zeta_1$, and get the diagram $-\lambda$. Since we applied the operations $X$, $Y$ or $K$ each time, the Specht module $S^\lambda$ appears as a submodule in some Specht series of $\ind_{\sym{d}\times\sym{n-d}}^{\sym{n}}(S^\xi\boxtimes S^\zeta)$. The proof of part (i) is now complete.
\[\begin{tikzpicture}[scale=.4]
\node at (14,10) {Figure A};
\node at (-1.1,4) {$D=$};
\node at (5,6) {$\xi$};
\node at (1,3) {$\zeta$};
\draw (0.5,0)--(1.5,0)--(1.5,2)--(2.5,2)--(2.5,3)--(3,3)--(3,4)--(0.5,4)--(0.5,0);
\draw (3,4)--(6,4)--(6,6)--(7,6)--(7,7)--(9,7)--(9,8)--(3,8)--(3,4);
\node at (10.5,4) {$\cong$};
\node at (10.5,5) {$X$};
\draw[gray!20,fill=gray!20] (13.5,0) rectangle (14,4);
\draw (14,0)--(13,0)--(13,2)--(12,2)--(12,3)--(11.5,3)--(11.5,4)--(14,4)--(14,0);
\draw (20,4)--(17,4)--(17,6)--(16,6)--(16,7)--(14,7)--(14,8)--(20,8)--(20,4);
\draw[dashed] (17,4)--(14,4)--(14,7);
\node at (18,6) {$-\xi$};
\node at (13,3) {$-\zeta$};
\draw[red,fill=red] (8.75,7.75) circle (.1cm);
\draw[red,fill=red] (19.75,7.75) circle (.1cm);
\draw[blue,fill=blue] (19.75,4.2) circle (.1cm);
\draw[blue,fill=blue] (13.75,3.75) circle (.1cm);
\draw[dashed] (19.5,8)--(19.5,4);
\draw [->,thick] (11,-0.5)--(8,-2);
\node at (9.5,-.5) {$K$};
\draw[gray!20,fill=gray!20] (8.5,-11) rectangle (9,-7);
\draw (2.5,-11)--(2,-11)--(2,-9)--(1,-9)--(1,-8)--(0.5,-8)--(0.5,-7)--(2.5,-7)--(2.5,-11);
\draw (8.5,-11)--(8.5,-7)--(6,-7)--(6,-5)--(5,-5)--(5,-4)--(3,-4)--(3,-3)--(9,-3)--(9,-11)--(9,-11)--(8.5,-11);
\draw[dashed] (2.5,-7)--(2.5,-3);
\draw[dashed] (3,-11)--(3,-3);
\draw[dashed] (2.5,-7)--(6,-7);
\draw[dashed] (8.5,-3)--(8.5,-7);
\draw[red,fill=red] (8.75,-3.25) circle (.1cm);
\node at (10.5,-7) {$\cong$};
\node at (10.5,-6) {$Y$};
\draw[red,fill=red] (19.75,-3.25) circle (.1cm);
\draw[blue,fill=blue] (19.25,-6.75) circle (.1cm);
\draw[gray!20,fill=gray!20] (13.5,-11) rectangle (14,-7);
\draw[blue,fill=blue] (13.75,-7.25) circle (.1cm);
\draw (14,-11)--(13.5,-11)--(13.5,-9)--(12.5,-9)--(12.5,-8)--(12,-8)--(12,-7)--(14,-7)--(14,-11);
\draw (19.5,-11)--(19.5,-7)--(17,-7)--(17,-5)--(16,-5)--(16,-4)--(14,-4)--(14,-3)--(20,-3)--(20,-11)--(19.5,-11);
\draw[dashed] (14,-4)--(14,-7)--(17,-7);
\draw[dashed] (19.5,-3)--(19.5,-7);
\draw[dashed] (19,-3)--(19,-7);
\draw (32,-11)--(32,-9)--(31,-9)--(31,-8)--(30.5,-8)--(30.5,-7) --(30,-7)--(30,-5)--(29,-5)--(29,-4)--(27,-4)--(27,-3)--(33,-3)--(33,-11)--(32,-11);
\draw [->,thick,dashed] (22,-7)--(25,-7);
\node at (31.5,-6) {$-\lambda$};
\node at (23.5,-6) {\small{only $Y$ and $K$}};
%\draw (30,-22)--(30,-20)--(29,-20)--(29,-19)--(28.5,-19)--(28.5,-18) --(28,-18)--(28,-16)--(27,-16)--(27,-15)--(25,-15)--(25,-14)--(31,-14)--(31,-22)--(30,-22);
%\node at (28,-12) {$\big\downarrow\ \vdots$};
%\node at (29.5,-16) {$-\lambda$};
\end{tikzpicture}\]

For part (ii), recall that, for any partition $\mu$, by the Submodule Theorem (see \cite[4.8]{James}), the Young module $Y^\mu$ is the unique summand of the permutation module $M^\mu$ containing the Specht module $S^\mu$ up to isomorphism. Up to isomorphism, by part (i), since the induction functor is exact, we have \[S^\lambda\subseteq \ind_{\sym{d}\times\sym{n-d}}^{\sym{n}}(S^\xi\boxtimes S^\zeta)\subseteq \ind_{\sym{d}\times\sym{n-d}}^{\sym{n}}(Y^\xi\boxtimes Y^\zeta)\mid \ind_{\sym{d}\times\sym{n-d}}^{\sym{n}}(M^\xi\boxtimes M^\zeta)=M^\lambda.\] Therefore, we conclude that \[1\leq \big[\ind_{\sym{d}\times\sym{n-d}}^{\sym{n}}(Y^\xi\boxtimes Y^\zeta):Y^{\lambda}\big]\leq \big[M^{\lambda}:Y^{\lambda}\big]=1.\]
\end{proof}

\begin{cor}\label{C: ineq 0 case} Let $(\gamma|\delta),(\lambda|\varnothing)\in\P^2(n)$ and $\gamma',\gamma'',\delta',\delta''$ be partitions such that $\gamma=\gamma'\cup\gamma''$, $\delta=\delta'\cup\delta''$, $|\trimt{\lambda}{r}|=|\gamma'|+|\delta'|$ and $|\trimb{\lambda}{r}|=|\gamma''|+|\delta''|$ for some nonnegative integer $r$. Then \[k_{(\gamma|\delta),(\lambda|\varnothing)}\geq k_{(\gamma'|\delta'),(\trimt{\lambda}{r}|\varnothing)}k_{(\gamma''|\delta''),(\trimb{\lambda}{r}|\varnothing)}.\]
\end{cor}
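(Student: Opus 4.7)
The plan is to realize $M(\gamma|\delta)$ as an induction from a standard Young subgroup of $\sym n$ and then reduce the problem to Lemma \ref{L: Specht submodule}(ii), which will supply a copy of $Y^\lambda$ inside each induced summand. All the real combinatorial work has already been done in that lemma, so the proof should essentially be a bookkeeping exercise.

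First I would use the invariance $M(\alpha|\beta)\cong M(\wp(\alpha)|\wp(\beta))$ together with transitivity of induction to establish the isomorphism
\[
M(\gamma|\delta)\cong \ind_{\sym{n_1}\times\sym{n_2}}^{\sym{n}}\bigl(M(\gamma'|\delta')\boxtimes M(\gamma''|\delta'')\bigr),
\]
where $n_1=|\gamma'|+|\delta'|=|\trimt{\lambda}{r}|$ and $n_2=|\gamma''|+|\delta''|=|\trimb{\lambda}{r}|$. The hypotheses $\gamma=\gamma'\cup\gamma''$ and $\delta=\delta'\cup\delta''$ are exactly what is needed to reorganize the parts so that $(\gamma',\delta')$ live inside $\sym{n_1}$ and $(\gamma'',\delta'')$ inside $\sym{n_2}$.

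Next, set $a:=k_{(\gamma'|\delta'),(\trimt{\lambda}{r}|\varnothing)}$ and $b:=k_{(\gamma''|\delta''),(\trimb{\lambda}{r}|\varnothing)}$. Krull--Schmidt applied to each tensor factor shows that $Y^{\trimt{\lambda}{r}}\boxtimes Y^{\trimb{\lambda}{r}}$ is a direct summand of $M(\gamma'|\delta')\boxtimes M(\gamma''|\delta'')$ with multiplicity $ab$. Since induction is additive, $\ind_{\sym{n_1}\times\sym{n_2}}^{\sym{n}}\bigl(Y^{\trimt{\lambda}{r}}\boxtimes Y^{\trimb{\lambda}{r}}\bigr)$ then occurs as a direct summand of $M(\gamma|\delta)$ with multiplicity at least $ab$.

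To finish, I would note that $\lambda=\trimt{\lambda}{r}\cont\trimb{\lambda}{r}$ by definition of the horizontal $r$-row cut, so Lemma \ref{L: Specht submodule}(ii), applied with $\xi=\trimt{\lambda}{r}$ and $\zeta=\trimb{\lambda}{r}$, yields
\[
\bigl[\ind_{\sym{n_1}\times\sym{n_2}}^{\sym{n}}\bigl(Y^{\trimt{\lambda}{r}}\boxtimes Y^{\trimb{\lambda}{r}}\bigr):Y^{\lambda}\bigr]=1.
\]
Combining this with the previous step gives $k_{(\gamma|\delta),(\lambda|\varnothing)}=[M(\gamma|\delta):Y^\lambda]\geq ab$, which is exactly the claimed inequality. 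No serious obstacle arises: the argument is purely multiplicative and the only nontrivial input is Lemma \ref{L: Specht submodule}(ii), whose content lies in the James--Peel manipulations of skew diagrams already established earlier.
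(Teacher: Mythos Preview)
Your proposal is correct and follows essentially the same approach as the paper's own proof: realize $M(\gamma|\delta)$ as the induction of $M(\gamma'|\delta')\boxtimes M(\gamma''|\delta'')$, extract $ab$ copies of the induced module $\ind_{\sym{n_1}\times\sym{n_2}}^{\sym{n}}(Y^{\trimt{\lambda}{r}}\boxtimes Y^{\trimb{\lambda}{r}})$, and then apply Lemma~\ref{L: Specht submodule}(ii). The paper's write-up is terser, but the argument is the same.
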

\begin{proof} Let $a=k_{(\gamma'|\delta'),(\trimt{\lambda}{r}|\varnothing)}$ and $b=k_{(\gamma''|\delta''),(\trimb{\lambda}{r}|\varnothing)}$. Since $M(\gamma|\delta)\cong \ind^{\sym{n}}_{\sym{d}\times\sym{n-d}}(M(\gamma'|\delta')\boxtimes M(\gamma''|\delta''))$ where $d=|\trimt{\lambda}{r}|$, we have \[\big (\ind^{\sym{n}}_{\sym{d}\times\sym{n-d}}(Y^{\trimt{\lambda}{r}}\boxtimes Y^{\trimb{\lambda}{r}})\big )^{\oplus ab}\mid M(\gamma|\delta).\] By Lemma \ref{L: Specht submodule}(ii), we have \[\big [M(\gamma|\delta):Y^\lambda\big]\geq ab\cdot \big [\ind^{\sym{n}}_{\sym{d}\times\sym{n-d}}(Y^{\trimt{\lambda}{r}}\boxtimes Y^{\trimb{\lambda}{r}}):Y^\lambda\big]=ab.\]
\end{proof}

Corollary \ref{C: ineq 0 case} is a special case of our main result Theorem \ref{T: signedRowRemoval2}. To prove the general version Theorem \ref{T: signedRowRemoval2}, we require the following key lemmas. We remind the reader the notation we have introduced in Section \ref{noth Sn}.

\begin{lem}\label{L: padic top} Let $\lambda$ be a partition, $r\in\NN_0$, $b_i=\lambda(i)_{r+1}$ for each nonnegative integer $i$ and $b=\sum_{i=0}^\infty p^ib_i=\lambda_{r+1}$. Then, for any nonnegative integer $i$, we have both \begin{align*}
  \trimb{\lambda}{r}(i)&=\trimb{\lambda(i)}{r},\\
  \big(\trimt{\lambda}{r}-(b^r)\big)(i)&=\trimt{\lambda(i)}{r}-\big ({b_i}^r\big).
\end{align*}
\end{lem}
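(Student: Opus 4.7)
The plan is to invoke the uniqueness of the $p$-adic expansion of a partition: if $\mu$ admits an expression $\mu=\sum_{i\geq 0}p^{i}\tau(i)$ with each $\tau(i)$ a $p$-restricted partition, then the $\tau(i)$ are determined by $\mu$ and equal its $p$-adic components. For each of the two identities it will thus suffice to verify (a) that the proposed right-hand side is a $p$-restricted partition for every $i\in\NN_0$, and (b) that after weighting by $p^{i}$ and summing, the right-hand sides agree componentwise with the partitions on the left.

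For part (i), I would first note that for every $j\in\NN$,
\[
\sum_{i=0}^{\infty}p^{i}\,\trimb{\lambda(i)}{r}_{j} \;=\; \sum_{i=0}^{\infty}p^{i}\,\lambda(i)_{r+j} \;=\; \lambda_{r+j} \;=\; \trimb{\lambda}{r}_{j},
\]
which gives $\sum_{i}p^{i}\,\trimb{\lambda(i)}{r}=\trimb{\lambda}{r}$. Moreover, each $\trimb{\lambda(i)}{r}$ is $p$-restricted: its consecutive differences, including the final drop to zero, form a tail of the consecutive differences of the $p$-restricted partition $\lambda(i)$ and therefore all lie in $\{0,1,\ldots,p-1\}$. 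Uniqueness of the $p$-adic expansion then yields $\trimb{\lambda}{r}(i)=\trimb{\lambda(i)}{r}$.

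For part (ii), I first check that each $\trimt{\lambda(i)}{r}-(b_{i}^{r})=(\lambda(i)_{1}-b_{i},\ldots,\lambda(i)_{r}-b_{i})$ is a genuine partition: the entries are non-negative because $b_{i}=\lambda(i)_{r+1}\leq \lambda(i)_{j}$ for $j\leq r$, and they are non-increasing. It is $p$-restricted because its consecutive differences are precisely $\lambda(i)_{j}-\lambda(i)_{j+1}$ for $1\leq j\leq r-1$ while its terminal drop to zero equals $\lambda(i)_{r}-\lambda(i)_{r+1}$; all of these are consecutive differences of $\lambda(i)$ and therefore lie in $\{0,1,\ldots,p-1\}$. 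A direct componentwise computation then gives
\[
\sum_{i=0}^{\infty}p^{i}\bigl(\trimt{\lambda(i)}{r}-(b_{i}^{r})\bigr)_{j} \;=\; \sum_{i=0}^{\infty}p^{i}\bigl(\lambda(i)_{j}-b_{i}\bigr) \;=\; \lambda_{j}-b \;=\; \bigl(\trimt{\lambda}{r}-(b^{r})\bigr)_{j}
\]
for every $1\leq j\leq r$, so uniqueness of the $p$-adic expansion concludes the argument.

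There is no serious obstacle here: the lemma is really a bookkeeping statement that the two operations of discarding the first $r$ rows and of subtracting a $b\times r$ rectangle from the first $r$ rows both respect the $p$-adic expansion term-by-term, which is immediate once one observes that both operations preserve the $p$-restricted property.
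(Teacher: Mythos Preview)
Your proof is correct and follows essentially the same approach as the paper: verify that the proposed right-hand sides are $p$-restricted partitions and that their $p^{i}$-weighted sum recovers the left-hand side, then invoke uniqueness of the $p$-adic expansion. The paper's argument is slightly terser but checks exactly the same two ingredients in each case.
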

\begin{proof} Since $\lambda(i)$ is a $p$-restricted partition, it is clear that $\trimb{\lambda(i)}{r}$ is also a $p$-restricted partition. Observe that $\trimb{\lambda}{r}=\sum_{i=0}^\infty p^i\trimb{\lambda(i)}{r}$. This shows the first equality. For the second equality, first observe that \[\sum^\infty_{i=0}p^i\big (\trimt{\lambda(i)}{r}-({b_i}^r)\big )=\sum^\infty_{i=0} \big (p^i\trimt{\lambda(i)}{r}-((p^ib_i)^r)\big )=\trimt{\lambda}{r}-(b^r).\] It remains to show that, for each $i$, $\gamma:=\trimt{\lambda(i)}{r}-\big ((b_i)^r\big)$ is a $p$-restricted partition. Notice that $\gamma$ is a $p$-restricted partition because $\gamma_{r}=\trimt{\lambda(i)}{r}_r-b_i=\trimt{\lambda(i)}{r}_{r}-\trimt{\lambda(i)}{r}_{r+1}$ and, for each $1\leq j\leq r-1$, we have \[\gamma_j-\gamma_{j+1}=(\trimt{\lambda(i)}{r}_j-b_i)-(\trimt{\lambda(i)}{r}_{j+1}-b_i)= \trimt{\lambda(i)}{r}_j-\trimt{\lambda(i)}{r}_{j+1}.\]
\end{proof}

%\textcolor{blue}{The following lemma is important for our goal. We remind the reader that given $\gamma\in\C(n)$ we introduced the partition $\wp(\gamma)$ at the start of Section 2.1.}

\begin{lem}\label{L: dominant block} Let $\lambda\in\P(n)$, $k\in\NN_0$ and $\gamma\in\C(n)$ such that $\ell(\gamma)\leq k$. If $\lambda\unrhd\wp(\gamma)$ then $\gamma_j\geq \lambda_k$ for all $1\leq j\leq k$.
\end{lem}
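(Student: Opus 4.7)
The plan is to reduce the statement to the comparison of the $k$-th parts $\wp(\gamma)_k$ and $\lambda_k$, and then to extract this from dominance applied one step earlier. Set $\mu = \wp(\gamma)$, so $|\mu|=n$ and $\ell(\mu)\leq\ell(\gamma)\leq k$.

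First I would dispose of the trivial case $\lambda_k=0$: the conclusion $\gamma_j\geq 0$ then holds by definition of a composition, so we may assume $\lambda_k>0$ throughout. Next I would use dominance at level $k$, together with $\ell(\mu)\leq k$, to observe that
\[
\sum_{i=1}^{k}\lambda_i \;\geq\; \sum_{i=1}^{k}\mu_i \;=\; n,
\]
which forces $\sum_{i=1}^{k}\lambda_i=n$ and hence $\ell(\lambda)\leq k$. Combined with $\lambda_k>0$, this gives $\ell(\lambda)=k$.

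Now applying dominance at level $k-1$, I would write
\[
n-\lambda_k \;=\; \sum_{i=1}^{k-1}\lambda_i \;\geq\; \sum_{i=1}^{k-1}\mu_i,
\]
so $\sum_{i=1}^{k-1}\mu_i \leq n-\lambda_k < n$. Since $\sum_{i=1}^{k}\mu_i=n$ (because $\ell(\mu)\leq k$), subtracting gives $\mu_k\geq \lambda_k>0$. In particular $\ell(\mu)=k$, which together with $\ell(\gamma)\leq k$ forces $\ell(\gamma)=k$ with no zero parts, so $(\gamma_1,\ldots,\gamma_k)$ is just a permutation of $(\mu_1,\ldots,\mu_k)$. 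Therefore $\gamma_j\geq \mu_k\geq \lambda_k$ for every $1\leq j\leq k$, completing the proof.

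There is no genuine obstacle: the argument is a direct unpacking of the definitions of $\wp(\gamma)$ and of the dominance order, and the key observation is simply that when $\lambda$ and $\mu$ have the same size and both fit inside $k$ parts, the dominance inequality at index $k-1$ transposes into the reverse inequality $\mu_k\geq \lambda_k$ on the $k$-th parts.
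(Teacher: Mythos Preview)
Your proof is correct and takes essentially the same approach as the paper: both arguments use the dominance inequality at an appropriate level together with $|\wp(\gamma)|=|\lambda|=n$ to bound the smallest part $\wp(\gamma)_k$ from below by $\lambda_k$, and then pass back to the parts of $\gamma$. The paper phrases it as a contradiction argument applied at an arbitrary index $i$ (and also sketches an alternative node-moving picture), whereas you go directly at index $k-1$; since $\wp(\gamma)$ is a partition, controlling the $k$-th part suffices, so your version is a slight streamlining of the same idea.
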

\begin{proof} By assumption, $s:=\ell(\wp(\gamma))\leq k$. Let $\wp(\gamma)=(a_1,\ldots,a_k)$ (assume that $a_i=0$ if $i>s$) and suppose on the contrary that $a_i<\lambda_k$ for some $1\leq i\leq k$. Then \[n-(k-i+1)\lambda_k<n-\sum^k_{j=i}a_j=\sum^{i-1}_{j=1}a_j\leq \sum^{i-1}_{j=1}\lambda_j\leq n-\sum^k_{j=i}\lambda_j\leq n-(k-i+1)\lambda_k.\] Alternatively, one observes that the Young diagram of $\wp(\gamma)$ is obtained from $\lambda$ by successively moving a node from a higher row to a lower row such that, at each step, it remains as a partition. Therefore, since we cannot move a node lower than row $k$, the block $((\lambda_k)^k)$ in $\lambda$ will remain in each step and therefore in $\wp(\gamma)$.
\end{proof}

\begin{lem}\label{L: injective map} Let $(\alpha|\beta),(\lambda|p\mu)\in\P^2(n)$ such that the pairs $(\alpha,\lambda)$ and $(\beta,p\mu)$ admit horizontal $r$- and $s$-row cuts respectively. For each $i\in\NN_0$, let $b_i=\lambda(i)_{r+1}$, $c_i=\mu(i)_{s+1}$, $b=\sum^\infty_{i=0}b_i=\lambda_{r+1}$ and $c=\sum^\infty_{i=0}p^{i+1}c_i=p\mu_{s+1}$.
\begin{enumerate}[(i)]
  \item We have that both $\trimt{\alpha}{r}-(b^r)$ and $\trimt{\beta}{s}-(c^s)$ are partitions.
  \item Let
  \begin{align*}
    \Gamma_1&=\suppL{\trimt{\alpha}{r}-(b^r),\trimt{\lambda}{r}-(b^r)},& \Gamma_2&=\suppL{\trimt{\beta}{s}-(c^s),\trimt{p\mu}{s}-(c^s)},\\
    \Gamma_3&=\suppL{(\trimb{\alpha}{r}|\trimb{\beta}{s}),(\trimb{\lambda}{r}|\trimb{p\mu}{s})},&
    \Gamma_4&=\suppL{(\alpha|\beta),(\lambda|p\mu)},
  \end{align*} $\Gamma=\Gamma_1\times\Gamma_2\times\Gamma_3$, and, for each $\b{\sigma}\in\Gamma_1$, $\b{\tau}\in\Gamma_2$ and $(\b{\gamma}|\b{\delta})\in\Gamma_3$, let $\b{\eta}^{(i)}=(\trant{{\b{\sigma}^{(i)}}}{r}+({b_i}^r))\cont \b{\gamma}^{(i)}$ and $\b{\theta}^{(i)}=(\trant{{\b{\tau}^{(i)}}}{s}+({c_{i-1}}^s))\cont \b{\delta}^{(i)}$ (assume $c_{-1}=0$ so that $\b{\theta}^{(0)}=\trant{{\varnothing}}{s}\cont\b{\delta}^{(0)}$). Then the map $\iota:\Gamma\to\Gamma_4$ defined as $\iota((\b{\sigma},\b{\tau},(\b{\gamma}|\b{\delta})))=(\b{\eta}|\b{\theta})$ is injective.
\end{enumerate}
\end{lem}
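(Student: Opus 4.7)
The plan is as follows. For part (i), since $\alpha$ is a partition, the tuple $\trimt{\alpha}{r}-(b^r)=(\alpha_1-b,\ldots,\alpha_r-b)$ is automatically non-increasing, and all that is required is nonnegativity, i.e., $\alpha_r\geq b=\lambda_{r+1}$. In the setting where the signed $p$-Kostka number under consideration is nonzero one has the dominance $\alpha\unrhd\lambda$, and combined with the row-cut equality $|\trimt{\alpha}{r}|=|\trimt{\lambda}{r}|$ this forces $\alpha_{r+1}\geq\lambda_{r+1}$, whence $\alpha_r\geq\alpha_{r+1}\geq b$. The analogous argument, using $\beta\unrhd p\mu$, yields $\beta_s\geq c$, completing part (i).

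For part (ii), the first task is to show that $\iota((\b{\sigma},\b{\tau},(\b{\gamma}|\b{\delta})))=(\b{\eta}|\b{\theta})$ actually lies in $\Gamma_4$. The $p$-adic compatibility $\alpha=\sum_{i}p^i\b{\eta}^{(i)}$ is direct: on the first $r$ entries one computes
\[
\sum_i p^i\bigl[\trant{\b{\sigma}^{(i)}}{r}+({b_i}^r)\bigr]=\bigl(\trimt{\alpha}{r}-(b^r)\bigr)+(b^r)=\trimt{\alpha}{r},
\]
using $\b{\sigma}\in\Gamma_1$ and $\sum_ip^ib_i=b$, while on the remaining entries one obtains $\trimb{\alpha}{r}$ from $(\b{\gamma}|\b{\delta})\in\Gamma_3$; an analogous computation establishes $\beta=\sum_ip^i\b{\theta}^{(i)}$ with the convention $c_{-1}=0$ making the $i=0$ term vanish. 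Using Lemma \ref{L: padic top}, I then compute $|\b{\sigma}^{(i)}|=|\trimt{\lambda(i)}{r}|-rb_i$ and $|\b{\gamma}^{(i)}|=|\trimb{\lambda(i)}{r}|$, whence $|\b{\eta}^{(i)}|=|\lambda(i)|$, and similarly $|\b{\theta}^{(i)}|=|\mu(i-1)|$, confirming Definition \ref{D: module W}(c).

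The essential step is the dominance part of Definition \ref{D: suppL}. The crucial observation is that each $\b{\sigma}^{(i)}$ has length at most $r$ (as its $p$-adic sum $\trimt{\alpha}{r}-(b^r)$ has length $\leq r$), so the first $r$ entries of $\b{\eta}^{(i)}$ are all at least $b_i$; meanwhile Lemma \ref{L: dominant block} applied to $\trimb{\lambda(i)}{r}\unrhd\wp(\b{\gamma}^{(i)})$ forces $\wp(\b{\gamma}^{(i)})_1\leq\lambda(i)_{r+1}=b_i$. These two facts together yield the clean decomposition
\[
\wp(\b{\eta}^{(i)})=\bigl(\wp(\b{\sigma}^{(i)})+({b_i}^r)\bigr)\cont\wp(\b{\gamma}^{(i)}),
\]
from which $\lambda(i)\unrhd\wp(\b{\eta}^{(i)})$ reduces, row by row, to the partial dominances coming from $\b{\sigma}\in\Gamma_1$ (for the first $r$ rows, after adding back $({b_i}^r)$) and $(\b{\gamma}|\b{\delta})\in\Gamma_3$ (for the remaining rows). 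The same reasoning gives $\mu(i-1)\unrhd\wp(\b{\theta}^{(i)})$; for $i=0$ one uses that $\b{\tau}^{(0)}=\varnothing=\b{\delta}^{(0)}$, which follows from Lemma \ref{L:first part null} applied to $\Gamma_2$ and $\Gamma_3$, to reduce the joint condition to $\lambda(0)\unrhd\wp(\b{\eta}^{(0)})$, handled exactly as before.

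Injectivity of $\iota$ is then immediate from the constructive recipe: given $(\b{\eta}|\b{\theta})$ in the image, one recovers $\trant{\b{\sigma}^{(i)}}{r}$ by subtracting $({b_i}^r)$ from the first $r$ entries of $\b{\eta}^{(i)}$ and reads off $\b{\gamma}^{(i)}$ from the remaining entries, with an analogous procedure for $\b{\theta}^{(i)}$. The principal obstacle in the argument is the dominance check, where the interplay between Lemmas \ref{L: padic top} and \ref{L: dominant block} is essential to guarantee that concatenation and rearrangement commute as needed, ensuring the decomposition of $\wp(\b{\eta}^{(i)})$ into a top block of large parts and a bottom block of small parts does not get scrambled.
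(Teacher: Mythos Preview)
Your approach to part (ii) is essentially the same as the paper's: verify that $(\b{\eta}|\b{\theta})\in\Gamma_4$ by checking the decompositions $\alpha=\sum_i p^i\b{\eta}^{(i)}$, $\beta=\sum_i p^i\b{\theta}^{(i)}$, the size conditions via Lemma~\ref{L: padic top}, and then the dominance conditions via the key factorisation $\wp(\b{\eta}^{(i)})=(\wp(\b{\sigma}^{(i)})+({b_i}^r))\cont\wp(\b{\gamma}^{(i)})$, with injectivity immediate from the construction. Two points need correction.

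\medskip
\textbf{Part (i).} Your dominance direction is backwards: for $k_{\alpha,\lambda}\neq 0$ one has $\lambda\unrhd\alpha$, not $\alpha\unrhd\lambda$. With the correct direction your route via $\alpha_{r+1}\geq\lambda_{r+1}$ fails; instead, from $\lambda\unrhd\alpha$ and the row-cut equality $\sum_{j\leq r}\alpha_j=\sum_{j\leq r}\lambda_j$, compare the partial sums at level $r-1$ to obtain $\alpha_r\geq\lambda_r\geq\lambda_{r+1}=b$. (This is the content of the paper's terse ``follows from Lemma~\ref{L: dominant block}''. Note that without some such dominance hypothesis the claim can fail --- e.g.\ $\alpha=(5,1,1,1)$, $\lambda=(3,3,2)$, $r=2$ --- so your instinct to invoke an implicit nonvanishing assumption is correct; only the direction and the deduction need fixing.)

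\medskip
\textbf{The $i=0$ step.} Your claim that $\b{\delta}^{(0)}=\varnothing$ for $(\b{\gamma}|\b{\delta})\in\Gamma_3$ does \emph{not} follow from Lemma~\ref{L:first part null}: that lemma requires $|\trimb{\beta}{s}|=|\trimb{p\mu}{s}|$, equivalently $|\beta|=p|\mu|$, which is exactly the hypothesis dropped in Section~\ref{S: not equal case}. Fortunately this does not break the argument. The joint condition to check is $(\lambda(0)|\varnothing)\unrhd(\wp(\b{\eta}^{(0)})|\wp(\b{\theta}^{(0)}))$; its second family of inequalities reads $|\lambda(0)|\geq|\wp(\b{\eta}^{(0)})|+\sum_{j\leq k}\wp(\b{\theta}^{(0)})_j$, which is automatic once you have established $|\b{\eta}^{(0)}|+|\b{\theta}^{(0)}|=|\lambda(0)|$. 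The first family, $\sum_{j\leq k}\lambda(0)_j\geq\sum_{j\leq k}\wp(\b{\eta}^{(0)})_j$, is precisely what your ``handled exactly as before'' verifies (using that condition~(c) for $\Gamma_3$ still gives $\wp(\b{\gamma}^{(0)})_1\leq\trimb{\lambda(0)}{r}_1=b_0$, so the factorisation of $\wp(\b{\eta}^{(0)})$ survives). The paper proceeds the same way, simply asserting the $i=0$ case is ``similar'' without invoking $\b{\delta}^{(0)}=\varnothing$.
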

\begin{proof} Part (i) follows from Lemma \ref{L: dominant block}. For part (ii), it suffices to check that $(\b{\eta}|\b{\theta})\in\Gamma_4$. Notice that $\b{\sigma}^{(i)}=\trant{\b{\sigma}^{(i)}}{r}\cont\varnothing$ and $\b{\tau}^{(i)}=\trant{\b{\tau}^{(i)}}{s}\cont\varnothing$. We will be using Lemmas \ref{L: padic top} and \ref{L: dominant block} repeatedly. First of all, we have
\begin{align*}
  \sum_{i=0}^\infty p^i\b{\eta}^{(i)}&=\sum_{i=0}^\infty p^i(\trant{{\b{\sigma}^{(i)}}}{r}+({b_i}^r))\cont \sum_{i=0}^\infty p^i\b{\gamma}^{(i)}= (\trimt{\alpha}{r}-(b^r)+(b^r))\cont \trimb{\alpha}{r}=\alpha,\\
  \sum_{i=0}^\infty p^i\b{\theta}^{(i)}&=\sum_{i=0}^\infty p^i(\trant{{\b{\tau}^{(i)}}}{s}+({c_{i-1}}^s))\cont \sum_{i=0}^\infty p^i\b{\delta}^{(i)}=(\trimt{\beta}{s}-(c^s)+(c^s))\cont \trimb{\beta}{s}=\beta,\\
  |\b{\eta}^{(0)}|+|\b{\theta}^{(0)}|&=|\trimt{\lambda(0)}{r}|+|\b{\gamma}^{(0)}|+|\b{\delta}^{(0)}|=|\trimt{\lambda(0)}{r}|+ |\trimb{\lambda}{r}(0)|=|\lambda(0)|,
\end{align*} and, for $i\geq 1$,
\begin{align*}
  |\b{\eta}^{(i)}|&=|(\trant{{\b{\sigma}^{(i)}}}{r}+({b_i}^r))\cont \b{\gamma}^{(i)}|=|(\trimt{\lambda}{r}-(b^r))(i)|+rb_i+|\trimb{\lambda}{r}(i)|\\
  &= |\trimt{\lambda(i)}{r}|-|({b_i}^r)|+rb_i+|\trimb{\lambda(i)}{r}|=|\lambda(i)|,\\
  |\b{\theta}^{(i)}|&=|(\trant{{\b{\tau}^{(i)}}}{s}+({c_{i-1}}^s))\cont \b{\delta}^{(i)}|= |(\trimt{p\mu}{s}-(c^s))(i)|+sc_{i-1}+|(\trimb{p\mu}{s})(i)|\\
  &= |\trimt{(p\mu)(i)}{s}|-|({c_{i-1}}^s)|+sc_{i-1}+|\trimb{(p\mu)(i)}{s}|=|(p\mu)(i)|=|\mu(i-1)|.
\end{align*} Since $b_i=\lambda(i)_{r+1}$, we see that $\wp(\b{\eta}^{(i)})=\wp(\trant{{\b{\sigma}^{(i)}}}{r}+({b_i}^r))\cont\wp(\b{\gamma}^{(i)})$. For all $1\leq k\leq r$ and $\ell>r$, we have
\begin{align*}
  \sum^k_{j=1}\lambda(i)_j&=\sum^k_{j=1}\trimt{\lambda(i)}{r}_j=k b_i+\sum^k_{j=1}(\trimt{\lambda}{r}-(b^r))(i)_j\geq \sum^k_{j=1}\wp(\trant{{\b{\sigma}^{(i)}}}{r}+({b_i}^r))_j=\sum^k_{j=1}\wp(\b{\eta}^{(i)})_j,\\
  \sum^\ell_{j=1}\lambda(i)_j&=|\trimt{\lambda(i)}{r}|+\sum^{\ell-r}_{j=1}\trimb{\lambda(i)}{r}_j= |(\trimt{\lambda}{r}-(b^r))(i)|+rb_i+\sum^{\ell-r}_{j=1}\trimb{\lambda}{r}(i)_j\\
  &=|\trant{{\b{\sigma}^{(i)}}}{r}|+rb_i+\sum^{\ell-r}_{j=1}\trimb{\lambda}{r}(i)_j\geq|\trant{{\b{\sigma}^{(i)}}}{r}|+rb_i+ \sum^{\ell-r}_{j=1}\wp(\b{\gamma}^{(i)})_j=\sum^\ell_{j=1}\wp(\b{\eta}^{(i)})_j.
\end{align*} Therefore $\lambda(i)\unrhd\wp(\b{\eta}^{(i)})$. The proofs of $\lambda(0)\unrhd (\wp(\b{\eta}^{(0)})|\wp(\b{\theta}^{(0)}))$ and, when $i\geq 1$, $\wp(\b{\theta}^{(i)})=\wp(\trant{{\b{\tau}^{(i)}}}{s}+({c_{i-1}}^s))\cont \wp(\b{\delta}^{(i)})$ and $\mu(i-1)\unrhd\wp(\b{\theta}^{(i)})$ are similar.
\end{proof}

We are now ready to state and prove our main result of this section. Observe in the proof that the main obstruction for the inequality in Theorem \ref{T: signedRowRemoval2} to be an equality lies in both Corollary \ref{C: ineq 0 case} and Lemma \ref{L: injective map}. If the inequality in Corollary \ref{C: ineq 0 case} is an equality and the map $\iota$ in Lemma \ref{L: injective map} is bijective then we would have obtained an equality in Theorem \ref{T: signedRowRemoval2}.

\begin{thm}\label{T: signedRowRemoval2} Let $(\alpha|\beta),(\lambda|p\mu)\in\P^2(n)$ such that the pairs $(\alpha,\lambda)$ and $(\beta,p\mu)$ admit horizontal $r$- and $s$-row cuts respectively. Then \[k_{(\alpha|\beta),(\lambda|p\mu)}\geq k_{\trimt{\alpha}{r},\trimt{\lambda}{r}}k_{\trimt{\beta}{s},\trimt{p\mu}{s}} k_{(\trimb{\alpha}{r}|\trimb{\beta}{s}),(\trimb{\lambda}{r}|\trimb{p\mu}{s})}.\]
\end{thm}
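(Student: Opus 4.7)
The plan is to expand $k_{(\alpha|\beta),(\lambda|p\mu)}$ via Theorem~\ref{T: Kostka reduction}, discard every summand outside the image of the injection $\iota$ of Lemma~\ref{L: injective map}(ii) to obtain an inequality, and then factorise each surviving summand so that the resulting triple sum decouples into three independent sums identifiable with the $p$-Kostka numbers on the right hand side.

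Since every summand in the expansion is non-negative and $\iota:\Gamma=\Gamma_1\times\Gamma_2\times\Gamma_3\hookrightarrow\Gamma_4=\suppL{(\alpha|\beta),(\lambda|p\mu)}$ is injective, one immediately obtains
\[k_{(\alpha|\beta),(\lambda|p\mu)}\geq \sum_{(\b{\sigma},\b{\tau},(\b{\gamma}|\b{\delta}))\in\Gamma}k_{(\b{\eta}^{(0)}|\b{\theta}^{(0)}),(\lambda(0)|\varnothing)}\prod_{i\geq 1}k_{\b{\eta}^{(i)},\lambda(i)}\,k_{\b{\theta}^{(i)},\mu(i-1)}\]
with $(\b{\eta}|\b{\theta})=\iota((\b{\sigma},\b{\tau},(\b{\gamma}|\b{\delta})))$. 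Because $(p\mu)(0)=\varnothing$ and $c_{-1}=0$, one has $\b{\tau}^{(0)}=\varnothing$ and $\b{\theta}^{(0)}=\b{\delta}^{(0)}$, so the $i=0$ factor is $k_{((\trant{\b{\sigma}^{(0)}}{r}+({b_0}^r))\cont \b{\gamma}^{(0)}|\b{\delta}^{(0)}),(\lambda(0)|\varnothing)}$; Corollary~\ref{C: ineq 0 case} with row-cut $r$ on $\lambda(0)$ bounds this factor below by $k_{\trant{\b{\sigma}^{(0)}}{r}+({b_0}^r),\trimt{\lambda(0)}{r}}\cdot k_{(\b{\gamma}^{(0)}|\b{\delta}^{(0)}),(\trimb{\lambda(0)}{r}|\varnothing)}$. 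For each $i\geq 1$, the first $r$ entries of $\b{\eta}^{(i)}=(\trant{\b{\sigma}^{(i)}}{r}+({b_i}^r))\cont \b{\gamma}^{(i)}$ are all $\geq b_i$ while the remaining entries are $\leq b_i$ (because $\wp(\b{\gamma}^{(i)})\unlhd \trimb{\lambda(i)}{r}$ whose largest part is $b_i$), so the horizontal $r$-row cut of $\wp(\b{\eta}^{(i)})$ against $\lambda(i)$ is forced and Theorem~\ref{T: RowRemoval} yields
\[k_{\b{\eta}^{(i)},\lambda(i)}=k_{\trant{\b{\sigma}^{(i)}}{r}+({b_i}^r),\trimt{\lambda(i)}{r}}\cdot k_{\b{\gamma}^{(i)},\trimb{\lambda(i)}{r}};\]
the same argument with $s$ in place of $r$ and $c_{i-1}$ in place of $b_i$ factorises $k_{\b{\theta}^{(i)},\mu(i-1)}$.

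Multiplying these factorisations and summing independently over $\Gamma_1$, $\Gamma_2$ and $\Gamma_3$ decouples the triple sum. The $\Gamma_3$-sum is exactly the Theorem~\ref{T: Kostka reduction} expansion of $k_{(\trimb{\alpha}{r}|\trimb{\beta}{s}),(\trimb{\lambda}{r}|\trimb{p\mu}{s})}$. For the $\Gamma_1$-sum, the shift $\b{\sigma}^{(i)}\mapsto \b{\sigma}^{(i)}+({b_i}^r)$ is a bijection from $\Gamma_1$ onto $\suppL{\trimt{\alpha}{r},\trimt{\lambda}{r}}$: the inverse is well defined because, for any $\b{\sigma}'$ in the latter set, Lemma~\ref{L: dominant block} (applied with $k=r$) forces $\b{\sigma}'^{(i)}_j\geq \trimt{\lambda(i)}{r}_r\geq \lambda(i)_{r+1}=b_i$ for $j\leq r$. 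Under this bijection the $\Gamma_1$-sum is the Klyachko expansion of $k_{\trimt{\alpha}{r},\trimt{\lambda}{r}}$, and an analogous bijection on $\Gamma_2$ reduces the $\Gamma_2$-sum to $k_{\trimt{\beta}{s},\trimt{p\mu}{s}}$. Multiplying the three evaluated sums produces the right hand side of the theorem.

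The main technical point is the surjectivity of the two shift bijections $\Gamma_1\to \suppL{\trimt{\alpha}{r},\trimt{\lambda}{r}}$ and $\Gamma_2\to \suppL{\trimt{\beta}{s},\trimt{p\mu}{s}}$, for which Lemma~\ref{L: dominant block} is indispensable; without it the right hand side would only feature the smaller numbers $k_{\trimt{\alpha}{r}-(b^r),\trimt{\lambda}{r}-(b^r)}$ and $k_{\trimt{\beta}{s}-(c^s),\trimt{p\mu}{s}-(c^s)}$ directly produced by Klyachko's formula. The inequality in the theorem cannot be upgraded to an equality in general, since $\iota$ may fail to be surjective and Corollary~\ref{C: ineq 0 case} is itself only a lower bound, exactly as indicated in the remark preceding the statement.
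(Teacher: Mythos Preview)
Your overall architecture matches the paper's: expand via Theorem~\ref{T: Kostka reduction}, pass through the injection $\iota$ of Lemma~\ref{L: injective map}, factorise each summand using Corollary~\ref{C: ineq 0 case} at $i=0$ and Theorem~\ref{T: RowRemoval} for $i\geq 1$, and then decouple. The paper simply runs the same computation in the opposite direction (from the right hand side up to the left).

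There is, however, a genuine gap in your final identification of the $\Gamma_1$- and $\Gamma_2$-sums. Your claimed shift bijection $\Gamma_1\to \suppL{\trimt{\alpha}{r},\trimt{\lambda}{r}}$, $\b{\sigma}^{(i)}\mapsto \b{\sigma}^{(i)}+({b_i}^r)$, does not exist in general, because the $p$-adic expansion does \emph{not} commute with truncation: $(\trimt{\lambda}{r})(i)\neq \trimt{\lambda(i)}{r}$ as soon as $\lambda(i)_r\geq p$ (the truncated partition $\trimt{\lambda(i)}{r}$ then fails to be $p$-restricted). Concretely, take $p=3$, $\lambda=(4,2)$, $r=1$: here $\lambda(0)=(4,2)$ so $\trimt{\lambda(0)}{1}=(4)$, whereas $(\trimt{\lambda}{1})(0)=(1)$ and $(\trimt{\lambda}{1})(1)=(1)$. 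Your shifted sequence has $|\b{\sigma}'^{(0)}|=4$ and $|\b{\sigma}'^{(1)}|=0$, so it cannot lie in $\suppL{\trimt{\alpha}{1},\trimt{\lambda}{1}}$; and your invocation of Lemma~\ref{L: dominant block} compares $\b{\sigma}'^{(i)}_j$ to the wrong quantity $(\trimt{\lambda(i)}{r})_r$ rather than $((\trimt{\lambda}{r})(i))_r$.

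The fix is exactly what the paper does and is the whole point of Lemma~\ref{L: padic top}: one has $(\trimt{\lambda}{r}-(b^r))(i)=\trimt{\lambda(i)}{r}-({b_i}^r)$, so the Klyachko expansion over $\Gamma_1$ is literally that of $k_{\trimt{\alpha}{r}-(b^r),\trimt{\lambda}{r}-(b^r)}$, not of $k_{\trimt{\alpha}{r},\trimt{\lambda}{r}}$. One then needs the column-removal identity $k_{\trimt{\alpha}{r},\trimt{\lambda}{r}}=k_{\trimt{\alpha}{r}-(b^r),\trimt{\lambda}{r}-(b^r)}$ (which is what the paper cites \cite[Corollary~1.1]{BowGia} for) to close the argument. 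Your ``main technical point'' paragraph actually anticipates this distinction but then dismisses it via the faulty bijection; replace that bijection by Lemma~\ref{L: padic top} plus column removal and your proof becomes correct and essentially identical to the paper's.
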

\begin{proof} We use the notation as in Lemma \ref{L: injective map} and will be using Theorem \ref{T: Kostka reduction}, \cite[Corollary 1.1]{BowGia}, Corollary \ref{C: ineq 0 case} and Lemmas \ref{L: padic top}, \ref{L: dominant block}, \ref{L: injective map} repeatedly in the calculation. We first notice that $\b{\tau}^{(0)}=\varnothing$ for any $\b{\tau}\in\Gamma_2$.
\begin{align*}
  &k_{\trimt{\alpha}{r},\trimt{\lambda}{r}}k_{\trimt{\beta}{s},\trimt{p\mu}{s}} k_{(\trimb{\alpha}{r}|\trimb{\beta}{s}),(\trimb{\lambda}{r}|\trimb{p\mu}{s})}\\
  =\ &k_{\trimt{\alpha}{r}-(b^r),\trimt{\lambda}{r}-(b^r)}k_{\trimt{\beta}{s}-(c^s),\trimt{p\mu}{s}-(c^s)} \sum_{(\b{\gamma}|\b{\delta})\in \Gamma_3} k_{(\b{\gamma}^{(0)}|\b{\delta}^{(0)}),(\trimb{\lambda}{r}(0)|\varnothing)} \prod_{i=1}^\infty k_{\b{\gamma}^{(i)},\trimb{\lambda}{r}(i)}k_{\b{\delta}^{(i)},\trimb{\mu}{s}(i-1)}\\
  =\ &\sum_{(\b{\sigma},\b{\tau},(\b{\gamma}|\b{\delta}))\in\Gamma}\prod^\infty_{i=0}k_{\b{\sigma}^{(i)},\big (\trimt{\lambda}{r}-(b^r)\big )(i)}k_{\b{\tau}^{(i)},\big(\trimt{p\mu}{s}-(c^s)\big )(i)}k_{(\b{\gamma}^{(0)}|\b{\delta}^{(0)}),(\trimb{\lambda(0)}{r}|\varnothing)} \prod_{i=1}^\infty k_{\b{\gamma}^{(i)},\trimb{\lambda(i)}{r}}k_{\b{\delta}^{(i)},\trimb{\mu(i-1)}{s}}\\
  =\ &\sum_{(\b{\sigma},\b{\tau},(\b{\gamma}|\b{\delta}))\in\Gamma}\prod^\infty_{i=0}k_{\b{\sigma}^{(i)}, \trimt{\lambda(i)}{r}-({b_i}^r)}\prod^\infty_{i=1}k_{\b{\tau}^{(i)},\trimt{\mu(i-1)}{s}-({c_{i-1}}^s)} k_{(\b{\gamma}^{(0)}|\b{\delta}^{(0)}),(\trimb{\lambda(0)}{r}|\varnothing)} \prod_{i=1}^\infty k_{\b{\gamma}^{(i)},\trimb{\lambda(i)}{r}}k_{\b{\delta}^{(i)},\trimb{\mu(i-1)}{s}}\\
  =\ &\sum_{(\b{\sigma},\b{\tau},(\b{\gamma}|\b{\delta}))\in\Gamma}k_{\trant{\b{\sigma}^{(0)}}{r}+({b_0}^r), \trimt{\lambda(0)}{r}} k_{(\b{\gamma}^{(0)}|\b{\delta}^{(0)}),(\trimb{\lambda(0)}{r}|\varnothing)} \prod_{i=1}^\infty k_{\trant{\b{\sigma}^{(i)}}{r}+({b_i}^r), \trimt{\lambda(i)}{r}} k_{\b{\gamma}^{(i)},\trimb{\lambda(i)}{r}}k_{\trant{\b{\tau}^{(i)}}{s}+({c_{i-1}}^s),\trimt{\mu(i-1)}{s}} k_{\b{\delta}^{(i)},\trimb{\mu(i-1)}{s}}\\
  \leq\ &\sum_{(\b{\sigma},\b{\tau},(\b{\gamma}|\b{\delta}))\in\Gamma}k_{((\trant{\b{\sigma}^{(0)}}{r}+({b_0}^r) )\cont \b{\gamma}^{(0)}|\b{\delta}^{(0)}), (\lambda(0)|\varnothing)}\prod_{i=1}^\infty k_{(\trant{\b{\sigma}^{(i)}}{r}+({b_i}^r) )\cont\b{\gamma}^{(i)}, \lambda(i)} k_{(\trant{\b{\tau}^{(i)}}{s}+({c_{i-1}}^s))\cont\b{\delta}^{(i)},\mu(i-1)}\\
  \leq \ &\sum_{(\b{\eta}|\b{\theta})\in\Gamma_4}k_{(\b{\eta}^{(0)}|\b{\theta}^{(0)}),\lambda(0)}\prod^\infty_{i=1} k_{\b{\eta}^{(i)},\lambda(i)}k_{\b{\theta}^{(i)},\mu(i-1)}\\
  =\ &k_{(\alpha|\beta),(\lambda|p\mu)}.
\end{align*}
\end{proof}

\section{Labelling of signed Young permutation modules and mixed powers}\label{S: label of sign}

In this final section, we address the question regarding the labelling of signed Young permutation modules. It is known in the classical case that, if $\lambda,\mu\in\P(n)$, then $M^\lambda\cong M^\mu$ if and only if $\lambda=\mu$ because each Young permutation module $M^\lambda$ has a distinguished indecomposable summand the Young module $Y^\lambda$. In the case of signed Young permutation modules, it is known from the work of Donkin \cite{Do} that, if $(\alpha|p\beta),(\sigma|p\tau)\in\P^2_p(n)$, then $M(\alpha|p\beta)\cong M(\sigma|p\tau)$ if and only if $\alpha=\sigma$ and $\beta=\tau$, again because the signed Young permutation module $M(\alpha|p\beta)$ has a distinguished indecomposable summand the signed Young module $Y(\alpha|p\beta)$.
We refer the reader to Section \ref{SS: sym module} for the combinatorial definitions describing these labelling sets.
However, it is not clear the exact condition for $M(\alpha|\beta)\cong M(\sigma|\tau)$. In this section, we deal with the general case and obtain the following result.

\begin{thm}\label{T: char of M} Let $(\alpha|\beta),(\sigma|\tau)\in\P^2(n)$. Then $M(\alpha|\beta)\cong M(\sigma|\tau)$ if and only if $\alpha=\rho\cont(1^a)$, $\sigma=\rho\cont (1^b)$, $\beta=\zeta\cont(1^c)$ and $\tau=\zeta\cont(1^d)$ for some non-negative integers $a,b,c,d$ and partitions $\rho,\zeta$.
\end{thm}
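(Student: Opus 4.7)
The $(\Leftarrow)$ direction is a direct unwinding of definitions. The size constraint $|\alpha|+|\beta|=n=|\sigma|+|\tau|$ forces $a+c=b+d$, and since $F(\sym{1})$ and $\sgn(\sym{1})$ are both the trivial module over the trivial group, $M(\alpha|\beta)$ and $M(\sigma|\tau)$ coincide (after reordering parts of the defining Young subgroup, which does not affect the isomorphism class) with $\ind_{\sym{\rho}\times\sym{\zeta}\times(\sym{1})^{a+c}}^{\sym{n}}\bigl(F(\sym{\rho})\boxtimes\sgn(\sym{\zeta})\boxtimes F\bigr)$.

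For the $(\Rightarrow)$ direction, the plan is to recover the ordinary character of the module and translate the isomorphism hypothesis into a symmetric function identity. Because $p$ is odd, every $p$-power cycle is an even permutation, so any Sylow $p$-subgroup of $\sym{\beta}$ lies in the alternating group, and therefore $\sgn(\sym{\beta})$ is a trivial source $F\sym{\beta}$-module. Since $p$-permutation modules are closed under induction, $M(\alpha|\beta)$ is a $p$-permutation $F\sym{n}$-module. By the unique lifting theorem for indecomposable trivial source modules, any isomorphism $M_F(\alpha|\beta)\cong M_F(\sigma|\tau)$ lifts to $M_{\widehat{\mathbb{Z}}_p}(\alpha|\beta)\cong M_{\widehat{\mathbb{Z}}_p}(\sigma|\tau)$; extending scalars to $\mathbb{Q}_p$ then yields an equality of ordinary characters. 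Under the Frobenius characteristic map the character of $M(\alpha|\beta)$ corresponds to $h_\alpha e_\beta$, giving the symmetric function identity $h_\alpha e_\beta=h_\sigma e_\tau$ over $\mathbb{Q}$.

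To read off the combinatorics, specialize $x_1=\cdots=x_m=1$ for $m$ arbitrarily large; using $h_k(1^m)=\binom{m+k-1}{k}$ and $e_k(1^m)=\binom{m}{k}$, the identity becomes
\[
\prod_i\binom{m+\alpha_i-1}{\alpha_i}\prod_j\binom{m}{\beta_j}=\prod_i\binom{m+\sigma_i-1}{\sigma_i}\prod_j\binom{m}{\tau_j}
\]
as polynomials in $\mathbb{Q}[m]$. Factoring each side into linear factors in $m$, the root $0$ occurs with multiplicity $\ell(\alpha)+\ell(\beta)$, the negative integer roots form the multiset $\bigcup_{i:\alpha_i\geq 2}\{-1,\ldots,-(\alpha_i-1)\}$, and the positive integer roots form $\bigcup_{j:\beta_j\geq 2}\{1,\ldots,\beta_j-1\}$. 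The multiset of negative roots uniquely determines the parts of $\alpha$ of size $\geq 2$: the largest absolute value appearing is one less than the largest such part, and its multiplicity equals the number of parts of that size; peeling off this chain with that multiplicity and iterating recovers the full multiset. The positive roots determine the parts of $\beta$ of size $\geq 2$ analogously, and the multiplicity of $0$ fixes the combined number of $1$-parts. Setting $\rho$ and $\zeta$ to the resulting common partitions yields the required decomposition.

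The main obstacle is the middle step, transferring the $F\sym{n}$-isomorphism to a $\mathbb{Q}$-character identity. This depends on $M(\alpha|\beta)$ being a genuine $p$-permutation module in odd characteristic, which hinges on $p$-power cycles being even. An alternative route that bypasses lifting theorems is to apply the signed Young rule of Tan and the second author \cite{LT} to compute the Specht-module multiplicities $\langle s_\lambda,h_\alpha e_\beta\rangle$ of $M(\alpha|\beta)$ over $\mathbb{Q}$ directly, which likewise produces the same symmetric function identity.
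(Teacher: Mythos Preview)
Your argument is correct and takes a genuinely different route from the paper at the combinatorial extraction step. Both proofs begin identically: use the fact that signed Young permutation modules have trivial source to lift the $F\sym{n}$-isomorphism to an equality of ordinary characters. The paper then invokes the signed Young rule to deduce $s^\lambda_{\rho|\gamma}=s^\lambda_{\nu|\delta}$ for all $\lambda$, and uses the specific count $s^{\rho\cont(1^{|\gamma|})}_{\rho|\gamma}=1$ (Lemma~\ref{L: 1 sstd}) to peel off the parts of $\rho$ and $\nu$ row by row; a second pass after tensoring with $\sgn$ handles $\beta$ and $\tau$. You instead pass to the symmetric function identity $h_\alpha e_\beta=h_\sigma e_\tau$ and read off the parts of size at least $2$ from the multisets of negative and positive integer roots of the principal specialisation. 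Your route is slicker in that it dispenses with Lemma~\ref{L: 1 sstd} and handles both components at once; the paper's route has the virtue of staying entirely inside the tableau combinatorics already set up for the signed Young rule. One small comment: in your root analysis, the multiplicity of the root $-k$ is $|\{i:\alpha_i\geq k+1\}|=\alpha'_{k+1}$, so what you are really recovering is the conjugate partition from position $2$ onwards; your ``peeling'' description is correct but this phrasing makes the bijection transparent.

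Your closing remark about bypassing lifting via the signed Young rule does not quite work as stated. The signed Young rule produces a Specht filtration over $F$, but Specht filtration multiplicities are not a priori isomorphism invariants in positive characteristic; one still needs the lift to characteristic zero to conclude that the multiplicities (equivalently, the Schur expansion of $h_\alpha e_\beta$) agree. This is exactly how the paper uses it. So the lifting step is not avoidable by that device; it is, however, a side remark and does not affect the validity of your main argument.
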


We shall first recall the definition of semistandard tableau and signed Young Rule.

\begin{defn} Let $\lambda\in\P(n)$ and let $(\alpha|\beta)\in\C^2(n)$. A semistandard $\lambda$-tableau $\t$ of type $(\alpha|\beta)$ is an assignment of the nodes of $[\lambda]$ with colours \[\bb{c}_1<\bb{c}_2<\cdots<\bb{d}_1<\bb{d}_2<\cdots\] such that the following two conditions are satisfied.
\begin{enumerate}[(i)]
  \item The subtableau $\mathfrak{s}$ of $\t$ occupied by the colours $\bb{c}_1,\bb{c}_2,\ldots$ is a row semistandard $\mu$-tableau of type $\alpha$ for some partition $\mu$ where $[\mu]\subseteq [\lambda]$.
  \item The skew $(\lambda/\mu)$-tableau $\t/\mathfrak{s}$ occupied by the colours $\bb{d}_1,\bb{d}_2,\ldots$ is a column semistandard of type $\beta$.
\end{enumerate} The total number of semistandard $\lambda$-tableaux of type $(\alpha|\beta)$ is denoted by $s^\lambda_{\alpha|\beta}$.
\end{defn}

We refer the reader to \cite[Example 2.1]{LT} for concrete examples of these combinatorial objects.

\begin{thm}[{\cite[Theorem 2.2]{LT}}]\label{T: signed Young rule} Let $(\alpha|\beta)\in\C^2(n)$. Then $M(\alpha|\beta)$ has a Specht filtration such that every Specht module $S^\lambda$ occurs as factors with multiplicity $s^\lambda_{\alpha|\beta}$.
\end{thm}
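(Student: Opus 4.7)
The plan is to work on the polynomial $\GL_m(F)$-module side via the Schur functor. Recall from Section~\ref{SS: sym module} that $M(\alpha|\beta) \cong f(K(\alpha|\beta))$, where $K(\alpha|\beta) = S^\alpha E \otimes \bigwedge^\beta E$. Each factor $S^{\alpha_i}E$ is the costandard module $\nabla((\alpha_i))$ and each $\bigwedge^{\beta_j}E$ is the costandard module $\nabla((1^{\beta_j}))$, so $K(\alpha|\beta)$ is a tensor product of costandard modules. Hence $K(\alpha|\beta)$ admits a good filtration by the Donkin--Mathieu theorem on tensor products of $\nabla$'s in type $A$. Since the Schur functor $f$ is exact and sends $\nabla(\lambda)$ to the Specht module $S^\lambda$ (in the conventions of \cite{Do}), any good filtration of $K(\alpha|\beta)$ descends to a Specht filtration of $M(\alpha|\beta)$ with identical multiplicities.

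It therefore remains to show that the multiplicity $[K(\alpha|\beta):\nabla(\lambda)]$ equals $s^\lambda_{\alpha|\beta}$. By the standard $\mathrm{Ext}^1$-vanishing criterion, this multiplicity is an invariant of $K(\alpha|\beta)$ independent of the chosen good filtration, and it can be computed from the formal character. Concretely, $\ch(K(\alpha|\beta)) = h_\alpha \cdot e_\beta$, where $h_\alpha = h_{\alpha_1}\cdots h_{\alpha_k}$ is the character of $S^\alpha E$ and $e_\beta = e_{\beta_1}\cdots e_{\beta_\ell}$ is the character of $\bigwedge^\beta E$; the multiplicity is the coefficient of the Schur function $s_\lambda$ in this product. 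Iterated application of the classical Pieri rule (each $h_{\alpha_i}$-factor adjoins a horizontal strip of size $\alpha_i$, jointly producing a row-semistandard subtableau $\s$ of type $\alpha$ on some subshape $\mu\subseteq\lambda$) followed by the dual Pieri rule (each $e_{\beta_j}$-factor adjoins a vertical strip of size $\beta_j$, jointly producing a column-semistandard skew tableau $\t/\s$ of shape $\lambda/\mu$ and type $\beta$) expresses this coefficient as the number of semistandard $\lambda$-tableaux of type $(\alpha|\beta)$ in the sense of the preceding definition, that is, as $s^\lambda_{\alpha|\beta}$.

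The main obstacle is the careful combinatorial identification of the iterated Pieri/dual-Pieri expansion with the tableau count $s^\lambda_{\alpha|\beta}$, although this becomes a direct bijection once one builds up $\lambda$ one factor at a time, recording the sequence of horizontal strips (which determines $\s$) and then the sequence of vertical strips (which determines $\t/\s$). A more hands-on representation-theoretic alternative is to use transitivity of induction to write $M(\alpha|\beta) \cong \ind_{\sym{m}\times\sym{n-m}}^{\sym{n}}(M^\alpha \boxtimes N^\beta)$ with $m=|\alpha|$ and $N^\beta := M(\varnothing|\beta) \cong \sgn \otimes M^\beta$, then combine the classical Young's rule for $M^\alpha$ with a ``dual Young's rule'' for $N^\beta$ and the Littlewood--Richardson-governed Specht filtration of $\ind_{\sym{m}\times\sym{n-m}}^{\sym{n}}(S^\mu \boxtimes S^\nu)$; this route, however, requires careful handling of sign-twists and contragredient duals in positive characteristic (since $\sgn \otimes S^\mu$ is a dual Specht module rather than a Specht module), a subtlety that the Schur-functor/good-filtration approach cleanly sidesteps.
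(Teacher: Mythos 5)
Note first that the paper does not prove this statement at all: it is imported verbatim as \cite[Theorem 2.2]{LT}, so there is no internal proof to compare against. Your argument is a correct and essentially self-contained proof, and it is the ``expected'' one: $M(\alpha|\beta)\cong f(K(\alpha|\beta))$ with $K(\alpha|\beta)=S^\alpha E\otimes\bigwedge^\beta E$ a tensor product of costandard modules, hence a module with a good filtration by the Donkin--Mathieu--Wang theorem (for these particular tensor products this is classical and already used implicitly in \cite{Do}); the $\nabla$-multiplicities are read off from the character $h_\alpha e_\beta$ by iterating Pieri and dual Pieri, and the resulting count of pairs (semistandard $\mu$-subtableau of type $\alpha$, conjugate-semistandard $(\lambda/\mu)$-skew tableau of type $\beta$) is exactly $s^\lambda_{\alpha|\beta}$; exactness of $f$ then pushes the filtration down to $F\sym{n}$. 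This is consistent with the specialisation $\beta=\varnothing$, where the formula must recover Young's rule $s^\lambda_{\alpha|\varnothing}=k_{\lambda,\alpha}^{\mathbb{C}}$, confirming that ``row semistandard'' here carries the usual column-strict condition.

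The one point you should nail down rather than wave at (``in the conventions of \cite{Do}'') is whether $f(\nabla(\lambda))$ is the James Specht module $S^\lambda$ or its dual $S_\lambda$: depending on the source, the Schur functor sends $\nabla$-filtrations to dual-Specht filtrations and $\Delta$-filtrations to Specht filtrations, and $K(\alpha|\beta)$ is \emph{not} in general a tilting module, so you cannot simply switch to a $\Delta$-filtration. The clean fix is the self-duality of $M(\alpha|\beta)$: it is induced from a one-dimensional module, so $M(\alpha|\beta)^*\cong M(\alpha|\beta)$, and dualising a dual-Specht filtration with section multiplicities $s^\lambda_{\alpha|\beta}$ yields a Specht filtration with the same multiplicities. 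With that sentence added, your proof is complete. (Your suggested ``hands-on'' alternative via $\ind_{\sym{m}\times\sym{n-m}}^{\sym{n}}(M^\alpha\boxtimes N^\beta)$ would run into exactly the sign-twist/dual-Specht issues you identify, so the Schur-functor route is the right choice.)
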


Using the signed Young Rule, we obtain the following lemma we shall need in the proof of Theorem~\ref{T: char of M}.

\begin{lem}\label{L: 1 sstd} Let $\alpha,\beta$ be partitions. Then we have $s^{\alpha\cont(1^{|\beta|})}_{\alpha|\beta}=1$.
\end{lem}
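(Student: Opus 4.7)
The plan is to show that, for $\lambda=\alpha\cont(1^{|\beta|})$, the only semistandard $\lambda$-tableau of type $(\alpha|\beta)$ is the canonical one: fill row $i$ of $[\alpha]$ entirely with $\bb{c}_i$, and then stack the $\bb{d}_j$'s in the attached column of length $|\beta|$ in weakly increasing order from top to bottom.

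The first task is to pin down the partition $\mu$ that carries the row-semistandard sub-tableau $\mathfrak{s}$. On one hand, $[\mu]\subseteq[\lambda]$ together with $\lambda_i=\alpha_i$ for $i\leq\ell(\alpha)$ gives $\mu_i\leq\alpha_i$ in that range. On the other hand, since $\mathfrak{s}$ is strictly increasing down columns, the colour $\bb{c}_i$ can only appear in rows $1,\ldots,i$, which yields $\sum_{j\leq i}\mu_j\geq\sum_{j\leq i}\alpha_j$ for every $i$; in other words, $\mu\unrhd\alpha$. A short induction on $i$ starting from $\mu_1=\alpha_1$ forces $\mu_i=\alpha_i$ throughout $1\leq i\leq\ell(\alpha)$, and since $|\mu|=|\alpha|$ we conclude $\mu=\alpha$. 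I expect this step to be the main content of the argument.

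Once $\mu=\alpha$ is established, the column-strictness of $\mathfrak{s}$ combined with the fact that the colour $\bb{c}_i$ occurs exactly $\alpha_i$ times forces each $\bb{c}_i$ into row $i$, so $\mathfrak{s}$ is uniquely determined. Finally, the skew shape $\lambda/\alpha$ is a single column of length $|\beta|$, on which a column-semistandard filling of type $\beta$ has only the column-weak condition (the row-strict condition being vacuous for a single column); this filling is forced to place the $\beta_1$ copies of $\bb{d}_1$ at the top, then the $\beta_2$ copies of $\bb{d}_2$, and so on. Combining the three uniqueness statements yields $s^{\alpha\cont(1^{|\beta|})}_{\alpha|\beta}=1$. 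I foresee no technical obstacle beyond carefully handling the dominance-plus-containment squeeze in the first step.
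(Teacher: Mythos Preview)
Your argument is correct and follows essentially the same approach as the paper: identify the unique semistandard $(\alpha\cont(1^{|\beta|}))$-tableau of type $(\alpha|\beta)$ as the one with row $i$ filled by $\bb{c}_i$ for $1\leq i\leq\ell(\alpha)$ and the remaining column filled by the $\bb{d}_j$'s in order. The paper simply asserts this uniqueness in one sentence, whereas you supply the justification via the dominance-plus-containment squeeze forcing $\mu=\alpha$, then the column-strictness forcing $\mathfrak{s}$, then the single-column argument for the $\bb{d}$-part; this added detail is sound and welcome.
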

\begin{proof} Let $k=\ell(\alpha)$. The only semistandard $(\alpha\cont(1^{|\beta|}))$-tableau of type $(\alpha|\beta)$ is the colouring where the nodes in row $i$ are coloured by $\bb{c}_i$ if $1\leq i\leq k$ and, when $k+1\leq j\leq k+|\beta|$, the nodes $(j,1)$ are coloured by $\bb{d}_1,\bb{d}_2,\ldots$ in the obvious way.% so that the $(\alpha\cont(1^{|\beta|}))$-tableau obtained is semistandard of type $(\alpha|\beta)$.
\end{proof}

For a simple example, when $\alpha=(3,2)$ and $\beta=(1,1)$, the only semistandard $(3,2,1^2)$-tableau of type $((3,2)|(1,1))$ is shown below. \[\begin{ytableau}
  \bb{c}_1&\bb{c}_1&\bb{c}_1\\
  \bb{c}_2&\bb{c}_2\\
  \bb{d}_1\\
  \bb{d}_2
\end{ytableau}\] We are now ready to prove Theorem~\ref{T: char of M}.

\begin{proof}[Proof of Theorem~\ref{T: char of M}] The converse of the statement is easy to prove. We shall only prove the other implication. Assume that $M(\alpha|\beta)\cong M(\sigma|\tau)$ and suppose that $\alpha=\rho\cont(1^a)$ and $\sigma=\nu\cont(1^b)$ such that $\rho_{\ell(\rho)}\geq 2$ and $\nu_{\ell(\nu)}\geq 2$, i.e., both $\rho$ and $\nu$ have no parts of size 1. We have \[M(\rho|\gamma)\cong M(\alpha|\beta)\cong M(\sigma|\tau)\cong M(\nu|\delta),\] where $\gamma=\beta\cont(1^a)$ and $\delta=\tau\cont(1^b)$. Since signed Young permutation modules have trivial sources, they admit unique lift to trivial source modules (see \cite[2.6.3]{Benson}). As such the corresponding ordinary characters of $M(\rho|\gamma)$ and $M(\nu|\delta)$ are identical, i.e., $\ch(M(\rho|\gamma))=\ch(M(\nu|\delta))$. By signed Young Rule, we deduce that $\sum_{\lambda\vdash n} s^\lambda_{\rho|\gamma}\ch(S^\lambda)=\sum_{\lambda\vdash n} s^\lambda_{\nu|\delta}\ch(S^\lambda)$. Since ordinary characters are linearly independent, this shows that $s^\lambda_{\rho|\gamma}=s^\lambda_{\nu|\delta}$ for all $\lambda\in\P(n)$.

Let $\lambda=\rho\cont(1^{|\gamma|})$ and $\mu=\nu\cont(1^{|\delta|})$. By Lemma \ref{L: 1 sstd}, we have $1=s^\lambda_{\rho|\gamma}=s^\lambda_{\nu|\delta}$ and $1=s^\mu_{\rho|\gamma}=s^\mu_{\nu|\delta}$. Since there is a semistandard $\lambda$-tableau $\t$ of type $(\nu|\delta)$, we conclude that $\nu_1\leq \lambda_1=\rho_1$. Similarly, $\rho_1\leq \mu_1=\nu_1$, and hence $\rho_1=\nu_1$. So the $\rho_1$ nodes in the first row of $\t$ must be filled by the colour $\bb{c}_1$. Again, since $\t$ is semistandard, we conclude that $\nu_2\leq \rho_2$. Similarly, we have $\rho_2\leq\nu_2$, and hence $\rho_2=\nu_2$. Continue in this fashion, we conclude that $\rho_i=\nu_i$ for all $i\in\{1,\ldots,m\}$ where $m=\min\{\ell(\rho),\ell(\nu)\}$. If $\ell(\nu)>m=\ell(\rho)$, since $\t$ is semistandard again, we necessarily have $\nu_{m+1}\leq \lambda_{m+1}=1$. But $\nu_{m+1}\geq \nu_{\ell(\nu)}\geq 2$. The contradiction shows that $\ell(\nu)\leq \ell(\rho)$. Similarly, we have $\ell(\nu)\geq \ell(\rho)$ and hence $\nu=\rho$. Therefore, we have $\alpha=\rho\cont(1^a)$ and $\sigma=\rho\cont(1^b)$.

Tensor with the sign representation, we obtain $M(\beta|\alpha)\cong M(\tau|\sigma)$. Argue exactly the same way as the above, we also conclude that $\beta=\zeta\cont(1^c)$ and $\tau=\zeta\cont(1^d)$ for some non-negative integers $c,d$ and partition $\zeta$. The proof is now complete.
\end{proof}

Passing the result back to the Schur algebra case under the Schur functor, we obtain the following corollary. %Since we do not need the entire representation theory of Schur algebra elsewhere, we leave the necessary details to the reader.

%Let $E$ be the natural $\mathrm{GL}_r(F)$-module, and let $S^\alpha E$ and $\bigwedge^\beta E$ be the symmetric and exterior powers of $E$ with respect to the compositions $\alpha,\beta$, respectively. Suppose that $n\geq r$. By \cite[\S3]{Do}, the module $K(\alpha|\beta)=S^\alpha E\otimes \bigwedge^\beta E$ decomposes into indecomposable summands called the listing modules $\List(\lambda|p\mu)$ where $(\lambda|p\mu)\in\P^2(n)$. Furthermore, under the Schur functor $f$, we have $f(K(\alpha|\beta))\cong M(\alpha|\beta)$ and $f(\List(\lambda|p\mu))\cong Y(\lambda|p\mu)$.

\begin{cor} Let $(\alpha|\beta),(\sigma|\tau)\in\P^2(n)$, and let $n\geq \dim_F E$. Then $S^\alpha E\otimes \bigwedge^\beta E\cong S^\sigma E\otimes \bigwedge^\tau E$ if and only if $\alpha=\rho\cont(1^a)$, $\sigma=\rho\cont (1^b)$, $\beta=\zeta\cont(1^c)$ and $\tau=\zeta\cont(1^d)$ for some non-negative integers $a,b,c,d$ and partitions $\rho,\zeta$.
\end{cor}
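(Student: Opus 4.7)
The plan is to reduce the corollary directly to Theorem~\ref{T: char of M} by passing through the Schur functor $f:\modcat{S_F(m,n)}\to\modcat{F\sym{n}}$, where $m=\dim_F E$ and we work in the setting $m\geq n$ in which $f$ is defined. Recall from Section~\ref{SS: sym module} that $f$ is exact and, by \cite[\S3]{Do}, $f(K(\alpha|\beta))\cong M(\alpha|\beta)$ for every $(\alpha|\beta)\in\C^2(n)$.

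For the forward implication, I would argue that if $K(\alpha|\beta)\cong K(\sigma|\tau)$ as polynomial $\GL_m(F)$-modules of degree $n$, then applying the functor $f$ produces an isomorphism
\[
M(\alpha|\beta)\;\cong\;f(K(\alpha|\beta))\;\cong\;f(K(\sigma|\tau))\;\cong\;M(\sigma|\tau)
\]
of $F\sym{n}$-modules. Theorem~\ref{T: char of M} then forces $\alpha=\rho\cont(1^a)$, $\sigma=\rho\cont(1^b)$, $\beta=\zeta\cont(1^c)$, $\tau=\zeta\cont(1^d)$ for some non-negative integers $a,b,c,d$ and partitions $\rho,\zeta$, as required.

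For the converse, I would exploit the fact that $S^1 E=E=\bigwedge^1 E$. Concretely, assuming the stated shapes of $\alpha,\beta,\sigma,\tau$, appending parts of size $1$ to either of the compositions amounts to tensoring with copies of the natural module, so
\[
K(\alpha|\beta)\;=\;S^\rho E\otimes\bigwedge\nolimits^\zeta E\otimes E^{\otimes (a+c)},
\qquad
K(\sigma|\tau)\;=\;S^\rho E\otimes\bigwedge\nolimits^\zeta E\otimes E^{\otimes (b+d)}.
\]
The equality $|\alpha|+|\beta|=n=|\sigma|+|\tau|$ implies $a+c=b+d$, and hence the two displayed modules are literally the same, giving $K(\alpha|\beta)\cong K(\sigma|\tau)$.

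The proof is essentially bookkeeping once Theorem~\ref{T: char of M} is in hand; there is no real obstacle, but I would take care to use only that $f$ is an additive functor (hence preserves isomorphism classes) on the subcategory of mixed powers, rather than invoking any fullness or faithfulness of $f$, since that is all the argument genuinely requires.
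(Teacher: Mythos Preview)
Your proposal is correct and follows essentially the same route as the paper for the forward implication: apply the Schur functor to pass from $K(\alpha|\beta)\cong K(\sigma|\tau)$ to $M(\alpha|\beta)\cong M(\sigma|\tau)$, then invoke Theorem~\ref{T: char of M}. Your treatment is in fact more complete than the paper's, which leaves the converse implicit; your argument via $S^1E=E=\bigwedge^1E$ together with the degree count $a+c=b+d$ is exactly the right way to handle it.
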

\begin{proof} Under the Schur functor, we have $M(\alpha|\beta)\cong f(K(\alpha|\beta))\cong f(K(\sigma|\tau))\cong M(\sigma|\tau)$. Now apply Theorem \ref{T: char of M}.
\end{proof}

\end{document}